\documentclass[11 pt]{amsart}
  
\textwidth=6.3in  
\oddsidemargin=0in
\evensidemargin=0in
\usepackage{amsmath,amssymb,amsthm, mathrsfs, mathtools}
\usepackage{amscd,mathtools}
  \usepackage{geometry}
  \usepackage{graphicx,epstopdf}
  \usepackage{color,xcolor}
  \usepackage{enumerate}
  \usepackage{xspace}
  \usepackage{tipa}
  \usepackage{epsfig}
  \usepackage{pinlabel}
  \usepackage[all]{xy}

\usepackage{tikz}
\usepackage{caption, subcaption}
\usepackage{tikz-cd}
\usetikzlibrary{calc,decorations.pathreplacing}
\usepackage{tikz}
\usetikzlibrary{calc}
\usetikzlibrary{arrows}
\usetikzlibrary{decorations.pathreplacing}
\usetikzlibrary{intersections}

%
%

\usepackage{mathrsfs}
\usetikzlibrary{matrix}
\usetikzlibrary{positioning}
\DeclareMathAlphabet{\pazocal}{OMS}{zplm}{m}{n}
\tikzset{>=stealth}



  \usepackage{hyperref}



  \newcommand{\calN}{\mathcal{N}}


  \newcommand{\EE}{\mathbb{E}}

  \newcommand{\RR}{\mathbb{R}}

  \newcommand{\ZZ}{\mathbb{Z}}
   \newcommand{\thick}{\ensuremath{\operatorname{thick}}\xspace} 

  
  \newcommand{\gothic}{\mathfrak}

  \newcommand{\go}{{\gothic o}}



  \newtheorem{theorem}{Theorem}[section]
  \newtheorem{proposition}[theorem]{Proposition}
  \newtheorem{corollary}[theorem]{Corollary}
  \newtheorem{lemma}[theorem]{Lemma}

  \theoremstyle{definition}
  \newtheorem{definition}[theorem]{Definition}
  \newtheorem{claim}[theorem]{Claim}
  
  \newtheorem*{claim*}{Claim}

  \newtheorem*{question*}{Question}
  \newtheorem*{answer*}{Answer}
  \newtheorem*{application*}{Application}

  \theoremstyle{remark}
  \newtheorem{remark}[theorem]{Remark}
  \newtheorem*{remark*}{Remark}
  


  \newcommand{\lemref}[1]{Lemma~\ref{#1}}
  \newcommand{\propref}[1]{Proposition~\ref{#1}}

  \newcommand{\figref}[1]{Figure~\ref{#1}}

  \newcommand{\eqnref}[1]{Equation~\eqref{#1}}
  
  \newcommand{\fo}{{\mathfrak{o}}}
  
  \newcommand{\bfb}{{\textbf{b}}}


  \DeclareMathOperator{\Isom}{Isom}


  \newcommand{\nc}{\newcommand}

\nc{\dmo}{\DeclareMathOperator}

\nc{\Q}{\mathbb{Q}}
\nc{\R}{\mathbb{R}}
\nc{\Z}{\mathbb{Z}}
\nc{\C}{\mathbb{C}}
\nc{\cS}{\mathcal{S}}
\nc{\iso}{\cong}
\dmo{\Diff}{Diff}
\dmo{\Homeo}{Homeo}
\dmo{\dist}{dist}
\dmo\BDiff{BDiff}
\dmo\slide{sl}
\dmo\im{im}
\dmo\id{id}
\dmo\Fix{Fix}

  

  \newcommand{\pka}{\partial_{\kappa}}


   

   
   
   
   

  \newcommand{\MCG}{\ensuremath{\mathcal{MCG}}\xspace}  
   

  \newcommand{\MF}{\ensuremath{\mathcal{MF}}\xspace} 
  \newcommand{\PMF}{\ensuremath{\mathcal{PMF}}\xspace} 
   
   
   
    



  \newcommand{\Teich}{{Teichm\"uller }} 
  \hyphenation{geo-desics} 
  \newcommand{\Ham}{{Hamenst\"adt }}


  \newcommand{\sC}{{\sf C}}

  \newcommand{\sK}{{\sf K}}

  \newcommand{\sQ}{{\sf Q}}   
  \newcommand{\sR}{{\sf R}}

  \newcommand{\cc}{{\sf c}}

  \newcommand{\kk}{{\sf k}}   
       
  \newcommand{\mm}{{\sf m}}   
  \newcommand{\nn}{{\sf n}}

  \newcommand{\qq}{{\sf q}}   
  \newcommand{\rr}{{\sf r}}




\DeclareMathOperator{\diam}{diam}
\DeclareMathOperator{\Tei}{Teich}

    
  \newcommand{\param}{{\mathchoice{\mkern1mu\mbox{\raise2.2pt\hbox{$
  \centerdot$}}
  \mkern1mu}{\mkern1mu\mbox{\raise2.2pt\hbox{$\centerdot$}}\mkern1mu}{
  \mkern1.5mu\centerdot\mkern1.5mu}{\mkern1.5mu\centerdot\mkern1.5mu}}}

\DeclarePairedDelimiterX{\norm}[1]{\lvert}{\rvert}{#1}
\DeclarePairedDelimiterX{\Norm}[1]{\lVert}{\rVert}{#1}

  \renewcommand{\setminus}{{\smallsetminus}}
  
  \newcommand{\st}{\mathbin{\mid}} 
  \newcommand{\ST}{\mathbin{\Big|}} 
  \newcommand{\from}{\colon\thinspace}

\newcommand{\CAT}{\ensuremath{\operatorname{CAT}(0)}\xspace}

  \begin{document}

  \title[Genericity of sublinearly Morse directions]{Genericity of sublinearly Morse directions in CAT(0) spaces and the Teichm\"uller space}
  
%

 \author   {Ilya Gekhtman}
 \address{Department of Mathematics, University of Toronto, Toronto, ON }
 \email{ilyagekh@gmail.com}

 \author   {Yulan Qing}
 \address{Shanghai Center for Mathematical Sciences, Fudan University, Shanghai}
 \email{yulan.qing@gmail.com}

  \author   {Kasra Rafi}
 \address{Department of Mathematics, University of Toronto, Toronto, ON }
 \email{rafi@math.toronto.edu}

 

\begin{abstract}
We show that the sublinearly Morse directions in the visual boundary of a rank-1 \CAT space with a geometric group action are generic in several commonly studied senses of the word, namely with respect to Patterson-Sullivan measures and stationary measures for random walks. We deduce that the sublinearly Morse boundary is a model of the Poisson boundary for finitely supported random walks on groups acting geometrically on rank-1 \CAT spaces. We prove an analogous result for mapping class group actions on \Teich space. Our main technical tool is a criterion, valid in any unique geodesic  metric space, that says that any geodesic ray with sufficiently many (in a statistical sense) strongly contracting segments is sublinearly contracting.
\end{abstract}
\maketitle

\section{Introduction}
A major theme in recent research in metric geometry has been to find evidence of abundance of hyperbolic behavior in non-hyperbolic spaces. Well studied examples of such spaces include \CAT spaces with rank-1 geodesics: geodesics which do not bound a flat of infinite diameter. In a sense, these can be considered as geodesics in \CAT spaces exhibiting hyperbolic behavior. To any \CAT space one can associate the visual boundary consisting of asymptotic equivalence classes of geodesic rays. Pairs of points on the visual boundary defining rank-1 geodesics form an open set invariant under the isometry group. Thus, they are assigned full weight by any full support measure on the square of the visual boundary boundary whose class is preserved and which is ergodic with respect a subgroup of isometries. However, the condition of being rank-1 for geodesic rays is not invariant under asymptotic equivalence, and thus cannot be used to define a isometry group invariant subset of the visual boundary. Moreover, the topological type of the visual boundary is not quasi-isometrically invariant \cite{CK00}. Thus, while genericity of rank-1 biinfinite geodesics encapsulates abundance of hyperbolic behavior in rank-1 \CAT spaces it is difficult to translate in terms of properties of groups acting on such spaces.

Qing and Rafi  \cite{subcontracting}, showed that a certain subset of the visual boundary consisting of limits of sublinearly Morse geodesics, when given a topology slightly different from the one induced from the visual boundary, called the sublinearly Morse boundary is in fact a quasi-isometry invariant. 

In this paper, we show that this subset of the visual boundary is "generic" in several reasonable senses of the word.
The visual boundary of a \CAT space $X$ carries several natural families of measures corresponding to limits of different averaging procedures over orbits of a group acting on $X$ properly and cocompactly. One such family is the so-called Patterson-Sullivan measure, studied in this context by Ricks \cite{Ricks}. These are the weak limits of ball averages in the metric on $X$ and are intimately related to the measure of maximal entropy on the unit tangent bundle of the geodesic flow on $X$.

\begin{theorem}\label{fullPSmeasuresublinear}
Let $G\curvearrowright X$ be a countable group acting properly discontinuously and by isometries  on a geodesically complete rank-1 \CAT space $X$. Assume the action is  temperate (the number of orbit points in a ball grows at most exponentially) and admits a finite Bowen-Margulis measure. (These assumptions are automatically satisfied when the action is cocompact). Let $\nu$ be the Patterson-Sullivan measure on the visual boundary of $X$. Then $\nu$ gives measure zero to the complement of sublinearly Morse directions.
\end{theorem}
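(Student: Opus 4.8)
\emph{Proof strategy.} The plan is to lift the statement to the space of bi-infinite geodesics, use ergodicity of the geodesic flow for the Bowen--Margulis measure to show that almost every forward ray has a definite proportion of its length covered by \emph{uniformly} strongly contracting segments, and then feed this into the contracting-segments criterion (the main technical tool of the paper). First I would set up the dynamics. Let $\mathcal GX$ be the space of unit-speed bi-infinite geodesics of $X$, with its geodesic flow $(g_t)_{t\in\RR}$. In the endpoint-and-parameter coordinates $\mathcal GX\cong(\partial X\times\partial X)\times\RR$ (away from the diagonal) the Bowen--Margulis measure $m$ attached to the Patterson--Sullivan density $\nu$ is locally equivalent to $\nu\times\nu\times dt$, and in particular has full support on the set of geodesics with both endpoints in the limit set. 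Geodesic completeness makes $\mathcal GX$ and its flow well behaved, and by the work of Ricks \cite{Ricks} the standing hypotheses (temperateness, finiteness of $m$) imply that $m$ descends to a finite measure $\bar m$ on $G\backslash\mathcal GX$, that the flow on $(G\backslash\mathcal GX,\bar m)$ is ergodic, and that $G$ contains a rank-$1$ isometry. Fix such an isometry and let $\gamma\subset X$ be its axis; an axis of a rank-$1$ isometry is $C$-strongly contracting for a constant $C$ depending only on the isometry, and since translating by an isometry preserves the contraction constant, every $G$-translate $a\gamma$ is $C$-strongly contracting with the \emph{same} $C$.

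Next I would produce a positive-measure ``contracting configuration''. Fix a large $L$ and a small $\epsilon$, and let $U\subset\mathcal GX$ be the set of geodesics $\xi$ for which some translate $a\gamma$ stays within $\epsilon$ of $\xi|_{[-L,L]}$ along that entire subsegment; it is open, $G$-invariant and nonempty (it contains $\gamma$), so its image $\bar U\subset G\backslash\mathcal GX$ is open and $\bar m(\bar U)>0$ by full support of $\bar m$. The point of this choice is a uniformity statement: if $g_t\xi\in U$ for every $t$ in an interval $[a,b]$, then a thin-triangles/convexity argument forces the approximating translates of $\gamma$ at nearby times to agree up to bounded error, so $\xi|_{[a-L,\,b+L]}$ uniformly fellow-travels a single $G$-translate of $\gamma$ and is therefore $C'$-strongly contracting, with $C'=C'(C,L,\epsilon)$ independent of $\xi$, of $t$, and of the chosen translate.

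Then I would run the ergodic theorem, apply the criterion, and descend to the boundary. Applying the Birkhoff ergodic theorem to $\mathbf 1_{\bar U}$ gives, for $m$-a.e.\ $\xi$,
\[
\lim_{T\to\infty}\frac1T\,\bigl|\{t\in[0,T]:g_t\xi\in U\}\bigr|
=\frac{\bar m(\bar U)}{\bar m(G\backslash\mathcal GX)}=:c>0,
\]
and combined with the previous paragraph this shows that for $m$-a.e.\ $\xi$ and all large $T$ the forward ray $\xi|_{[0,\infty)}$ contains, inside $[0,T]$, a family of $C'$-strongly contracting subsegments of total length at least $\tfrac c2\,T$. This is precisely the ``statistically many strongly contracting segments'' hypothesis of the main technical criterion, which then yields that $\xi|_{[0,\infty)}$ is sublinearly contracting, hence that its forward endpoint $\xi^+$ is a sublinearly Morse direction. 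Finally, being sublinearly contracting is invariant under passing to an asymptotic ray, so it is a property of $\xi^+$ alone; thus the full-$m$-measure set just produced has the form $\{\xi:\xi^+\in\mathcal S\}$ for $\mathcal S\subset\partial X$ the set of sublinearly Morse directions, and Fubini applied to the local product $m\asymp\nu\times\nu\times dt$ (using that for $\nu$-a.e.\ $\eta\in\partial X$ the set of $\eta'$ joined to $\eta$ by an $m$-generic geodesic is $\nu$-conull) forces $\nu(\partial X\setminus\mathcal S)=0$, which is the assertion of the theorem.

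The step I expect to be the main obstacle is the uniformity claim in the second paragraph: converting ``the flowed geodesic stays inside the fixed open set $\bar U$ over a time interval'' into ``this subsegment is strongly contracting with a constant independent of the geodesic''. This requires the thin-triangles bookkeeping that keeps the approximating $G$-translates of $\gamma$ coherent as $t$ varies, together with the standard but not entirely trivial fact that a long, sufficiently close fellow-traveler of a strongly contracting geodesic line is itself strongly contracting with a controlled constant. The remaining ingredients are either quoted from \cite{Ricks} (the local product structure and ergodicity of $m$, and the existence of a rank-$1$ isometry) or are routine applications of the contracting-segments criterion and of Fubini.
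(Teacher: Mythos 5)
Your proposal follows essentially the same route as the paper: ergodicity of the geodesic flow for the finite Bowen--Margulis measure (Ricks), positivity of the measure of an open set of geodesics fellow-travelling a fixed rank-$1$ axis (via full support of $\tilde m$), the Birkhoff ergodic theorem, the frequently-contracting criterion, and descent to the boundary using that sublinear Morseness depends only on the asymptoty class of the ray. One remark is worth making about the step you flag as the main obstacle: it is not actually needed. The paper's criterion (Definition of frequently contracting together with Lemma~\ref{conditiontobefrequentlycontracting}) does not ask for long strongly contracting subsegments covering a definite proportion of the ray, nor for coherence of the approximating translates of $\gamma$ across overlapping time windows; it only asks that, at each scale, every window $[t,t+\theta R]$ contain a \emph{single} subsegment of fixed length $2L$ lying within $C$ of \emph{some} $N$-contracting geodesic. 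The Birkhoff limit for $\mathbf 1_{U}$ being positive already delivers exactly this (the positive density of times $t$ with $g_t\xi\in U$ is precisely the positive limit of $\thick_\tau(T)/T$), so the thin-triangles bookkeeping that would merge the translates into one long contracting fellow-traveller can be dropped entirely; Proposition~\ref{Prop:Small-Linear-Contracting} does the rest with just one short contracting window per scale.
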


The other family consists of stationary measures associated to random walks coming from finitely supported measures $\mu$ on $G$: these are weak limits of pushforwards in $X$ of convolutions of $\mu$.
\begin{theorem}\label{fullRWmeasuresublinear}
Let $G\curvearrowright X$ be a nonelementary action of a countable group of properly discontinuous and isometric actions on a rank-1 \CAT space $X$. Assume the action is temperate. Let $\nu$ be the stationary measure on the visual boundary associated to a finitely supported probability measure $\mu$ on $G$ whose finite support generates $G$ as a semigroup. Then $\nu$ gives measure zero 
to the complement of sublinearly Morse directions.
\end{theorem}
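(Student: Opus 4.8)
The plan is to realise $\nu$ as the hitting measure of the $\mu$-random walk and to show that almost every sample path converges to a direction whose geodesic representative satisfies the statistical hypothesis of our contraction criterion. Let $(g_i)_{i\ge 1}$ be i.i.d.\ with law $\mu$ and $\omega_n = g_1\cdots g_n$. Since the action is nonelementary, properly discontinuous and temperate, the random walk converges in the visual compactification, $\omega_n o \to \mathrm{bnd}(\omega)\in\partial_\infty X$ almost surely, and the law of $\mathrm{bnd}$ is the (unique) $\mu$-stationary measure $\nu$. Hence it suffices to prove that for $\PP$-almost every $\omega$ the geodesic ray $[o,\mathrm{bnd}(\omega))$ is $\kappa$-Morse for some sublinear $\kappa$.

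The source of contracting behaviour is that a nonelementary isometry group of a rank-1 \CAT space contains rank-1 elements, and in fact a ``Schottky family'': there are constants $D,\lambda,c$ and finitely many elements $s_1,\dots,s_N\in G$, each expressible as a word of length $\le L$ in $\supp\mu$, such that each orbit segment $[o,s_j o]$ is $D$-strongly contracting and any admissible concatenation of translates of these segments is a $(\lambda,c)$-quasigeodesic that uniformly fellow-travels the genuine geodesic joining its endpoints. Because $\supp\mu$ generates $G$ as a semigroup, there is $p_0>0$ so that, over disjoint blocks of $L$ consecutive steps of the walk, the probability that a block spells some $s_j$ is at least $p_0$, independently across blocks.

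Partition $\NN$ into consecutive length-$L$ blocks and call a block \emph{good} if its increments spell a Schottky element; goodness of distinct blocks is i.i.d.\ Bernoulli. Using the alignment and separation properties of the Schottky family together with a pivoting argument in the style of Gou\"ezel and Maher--Tiozzo, one shows that $\PP$-almost surely a positive proportion of the good blocks are \emph{pivotal}, meaning that the corresponding $D$-strongly contracting segment persists --- up to bounded Hausdorff perturbation, hence with controlled constants --- as a strongly contracting subsegment of the infinite ray $[o,\mathrm{bnd}(\omega))$. Since $\mu$ is finitely supported the walk makes at most linear progress, and since the action is nonelementary it also makes at least linear progress, so these contracting subsegments occur at arc-length parameters of positive linear density along the ray. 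A ray carrying strongly contracting subsegments at positive density certainly meets the statistical hypothesis of our criterion, so $[o,\mathrm{bnd}(\omega))$ is $\kappa$-contracting, hence $\kappa$-Morse, for $\PP$-almost every $\omega$; therefore $\nu$ charges only sublinearly Morse directions.

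The crux is the pivoting step. A Schottky element inserted in the middle of the random word need not leave a visible trace on $[o,\mathrm{bnd}(\omega))$, because the random portions of the path before and after it may backtrack into its contracting neighbourhood; the pivoting technique is exactly what shows that a definite fraction of the good blocks are never undone by later cancellation, and this is where both the uniform alignment lemmas for contracting geodesics in \CAT spaces and the independence of the increments are used essentially. The secondary technical points are that a bounded perturbation of a sufficiently long $D$-strongly contracting geodesic is again strongly contracting with controlled constants, and that ``positive density of pivotal blocks in the step parameter'' transfers to ``positive density of contracting segments in the arc-length parameter'', which is what the linear-progress estimates provide.
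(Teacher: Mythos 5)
Your proposal is correct in outline, but it takes a genuinely different route from the paper --- in fact it is essentially the alternative route that the paper itself attributes to Choi. You build a Schottky family of strongly contracting orbit segments and run a Gou\"ezel/Maher--Tiozzo style pivoting argument to show that a positive proportion of good blocks survive as strongly contracting subsegments of the limit ray, with linear progress converting block density into arc-length density. The paper instead works on the \emph{bilateral} path space $(G^{\mathbb{Z}},\overline{P})$ with the ergodic shift $U$: it first shows (using full support of the harmonic measure on $L(G)$ together with the flat-strip lemma, Lemma~\ref{strip}) that the set $\Lambda(L,C)$ of paths whose bi-infinite geodesic fellow-travels a fixed rank-1 axis near the basepoint has positive $\overline{P}$-measure, then applies the Birkhoff ergodic theorem twice --- once to get the decay estimate of Lemma~\ref{decaylemma}, once in Proposition~\ref{longfellowtravelbilateral} --- to force every window $[aR,bR]$ of the geodesic to contain a contracting segment; double ergodicity of the Poisson boundary (Lemma~\ref{Poissonergodic}) and Karlsson--Margulis tracking feed into this. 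What your approach buys: it avoids boundary theory and double ergodicity entirely, it is quantitative (exponential concentration of pivotal times rather than a soft Birkhoff limit), it does not need the measure to be symmetric, and it generalizes to actions with a contracting element that need not be \CAT. What the paper's approach buys: it is softer, reuses verbatim the Birkhoff machinery already set up for the Patterson--Sullivan case, and sidesteps the delicate alignment/persistence analysis that is the crux of pivoting. Two small points to tighten in your write-up: the criterion of Lemma~\ref{conditiontobefrequentlycontracting} requires the \emph{limit} $\lim_T \thick_\tau(T)/T$ to exist and be positive, not merely a positive lower density, so you should note that the law of large numbers for pivotal blocks supplies an actual limit; and the persistence of a contracting segment from the finite geodesics $[o,\omega_n o]$ to the limit ray should be justified by local uniform convergence of geodesics in a proper \CAT space.
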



As a corollary to Theorem~\ref{fullRWmeasuresublinear} we obtain:
\begin{corollary}\label{Poissonboundary}
Let $G\curvearrowright X$ and $\nu$ be as in Theorem \ref{fullRWmeasuresublinear}. Then for a suitable sublinear function $\kappa$, the $\kappa$-Morse boundary of $X$ with either the subspace topology induced from the visual boundary or the Qing-Rafi topology is a topological model for the Poisson boundary of $(G,\mu)$.
\end{corollary}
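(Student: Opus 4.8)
The plan is to deduce the corollary from Theorem~\ref{fullRWmeasuresublinear} together with Kaimanovich's strip criterion for identifying the Poisson boundary. I would take $\kappa$ to be the sublinear function produced by (the proof of) Theorem~\ref{fullRWmeasuresublinear}, applying that theorem simultaneously to $\mu$ and to the reflected measure $\check\mu$ given by $\check\mu(g)=\mu(g^{-1})$; this is legitimate since $\supp\check\mu=(\supp\mu)^{-1}$ again generates $G$ as a semigroup (if $g^{-1}=s_1\cdots s_k$ with $s_i\in\supp\mu$ then $g=s_k^{-1}\cdots s_1^{-1}$). Fix a basepoint $o\in X$. By Theorem~\ref{fullRWmeasuresublinear} and \cite{subcontracting}, almost every sample path $(w_n)$ has $w_n o$ converging to a point of the $\kappa$-Morse boundary $\partial_\kappa X$, the hitting distribution is the stationary measure $\nu$ (non-atomic, since the action is nonelementary) which gives full mass to $\partial_\kappa X$, and consequently $(\partial_\kappa X,\nu)$ is a $\mu$-boundary, i.e.\ an equivariant measurable quotient of the Poisson boundary of $(G,\mu)$; symmetrically $(\partial_\kappa X,\check\nu)$ is a $\check\mu$-boundary. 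Because $\mu$ is finitely supported it automatically has finite entropy and finite logarithmic moment (and likewise $\check\mu$), so to invoke the strip criterion it remains only to produce an equivariant, Borel, subexponentially growing assignment of ``strips'' in $G$.

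Next I would build the strips from connecting geodesics. For $\check\nu\otimes\nu$-almost every pair $(\xi_-,\xi_+)\in\partial_\kappa X\times\partial_\kappa X$ the points $\xi_\pm$ are distinct (as $\nu$ is non-atomic, the diagonal is $\check\nu\otimes\nu$-null), and since both are $\kappa$-Morse there is a bi-infinite geodesic in $X$ joining them; by \cite{subcontracting} any such line is itself $\kappa$-contracting, being the union of two $\kappa$-Morse rays. Using a measurable selection I would pick one such geodesic $\gamma(\xi_-,\xi_+)$, depending Borel-measurably and $G$-equivariantly on the pair, and set
\[ S(\xi_-,\xi_+)=\bigl\{\,g\in G : d(go,\gamma(\xi_-,\xi_+))\le L\,\bigr\} \]
for a fixed constant $L$. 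Equivariance of $S$ is immediate from equivariance of $\gamma$ and the fact that $G$ acts by isometries.

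The subexponential bound I would argue as follows. Since the action is properly discontinuous by isometries on the proper space $X$, a standard argument gives a finite bound $N(r)$ on the number of orbit points in \emph{any} radius-$r$ ball, independent of the center. In a \CAT space $t\mapsto d(o,\gamma(t))$ is convex, so $\gamma(\xi_-,\xi_+)$ meets $B_{R+L}(o)$ in an arc of length $O(R)$, covered by $O(R)$ unit balls, each of which accounts for at most $N(L+1)$ elements of $S(\xi_-,\xi_+)\cap B_R(o)$; hence this set has size $O(R)$. Since a nonelementary finitely supported walk has positive linear drift, $d(o,w_n o)$ grows linearly in $n$ along almost every path, so $\tfrac1n\log\#\bigl(S(\xi_-,\xi_+)\cap B_{d(o,w_n o)}(o)\bigr)\to0$ almost surely, which is exactly the hypothesis of the strip criterion. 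The criterion then identifies $(\partial_\kappa X,\nu)$ with the Poisson boundary of $(G,\mu)$ as a measurable $G$-space; since $\partial_\kappa X$ carries the same standard Borel structure for the subspace topology from the visual boundary and for the Qing--Rafi topology, the statement follows for either.

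The step I expect to be the main obstacle is the geometric input about connecting geodesics: proving that any two $\kappa$-Morse points of the visual boundary are joined by a bi-infinite geodesic, that such a line is genuinely $\kappa$-contracting with the same gauge, and that one can select it in a Borel, $G$-equivariant way --- in effect upgrading the one-sided statements of \cite{subcontracting} and Theorem~\ref{fullRWmeasuresublinear} to a two-sided, uniform, measurable one. The remaining ingredients (finiteness of moments from finite support, the packing bound $N(r)<\infty$ from proper discontinuity, positive drift of finitely supported walks, and the formal application of the strip criterion) are routine. As an alternative to the measurable selection one could take $\gamma(\xi_-,\xi_+)$ to be the union of \emph{all} connecting geodesics, whose transverse width is only sublinear; the strip count then becomes $e^{o(R)}$ instead of $O(R)$, which still suffices and uses the temperateness hypothesis in place of the selection.
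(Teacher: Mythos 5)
The central gap is at the first step. Theorem~\ref{fullRWmeasuresublinear}, via Theorem~\ref{main-frequentlycontractingimpliessublinearlymorse}, only gives that $\nu$-a.e.\ $\zeta$ is $\kappa_\zeta$-Morse for \emph{some} sublinear function $\kappa_\zeta$ depending on $\zeta$; the proof of Theorem~\ref{main-frequentlycontractingimpliessublinearlymorse} is by contradiction and produces no explicit, let alone uniform, gauge. So ``the sublinear function produced by (the proof of) Theorem~\ref{fullRWmeasuresublinear}'' does not exist, and without a single $\kappa$ the set $\pka X$ that the corollary asserts to be a Poisson boundary model is not even specified, nor is it known to carry full $\nu$-measure. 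The paper closes exactly this gap with Theorem~\ref{rayisMorse}: take a countable family $\{\kappa_i\}$ of sublinear functions dominating every sublinear function up to a multiplicative constant (separability), observe that each set $A_i$ of $\kappa_i$-Morse directions is $G$-invariant and $\nu(\bigcup_i A_i)=1$, and use ergodicity of $G\curvearrowright(\partial X,\nu)$ (from Lemma~\ref{stationaryproperties}) to force $\nu(A_i)=1$ for a single $i$. You need this argument, or an equivalent one, before anything else in your proof can start.

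Beyond that, your route genuinely diverges from the paper's. You re-derive the boundary identification from Kaimanovich's strip criterion, whereas the paper feeds the uniform $\kappa$ together with Karlsson--Margulis sublinear tracking \cite{KarlssonMargulis} into Theorem~\ref{T:poiss-general} (Theorem 6.2 of \cite{QRT2}), which already packages the entropy estimate for temperate actions and requires only a tracked $\kappa$-Morse \emph{ray}, not two-sided strips. Your strip construction leaves unresolved precisely the points you flag as the main obstacle: that $\check\nu\otimes\nu$-a.e.\ pair of $\kappa$-Morse points is joined by a bi-infinite geodesic (in the paper this comes from Lemma~\ref{strip} and the ergodicity argument of Lemma~\ref{stationaryproperties} applied to the measures at hand, not from $\kappa$-Morseness of the two endpoints alone); that the strip is \emph{nonempty} (for a merely properly discontinuous, non-cocompact action the orbit need not come within a fixed distance $L$ of the connecting geodesic, so $S(\xi_-,\xi_+)$ could be empty and the criterion vacuous); and the $G$-equivariant Borel selection of the connecting geodesic. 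None of these is needed if you instead prove the single-$\kappa$ statement by ergodicity and then apply Theorem~\ref{T:poiss-general}, which is the paper's (much shorter) argument.
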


We also prove analogous results for the action of mapping class group on \Teich space 
equipped with the \Teich metric.

\begin{theorem}\label{fullmeasuresublinearteich}
Let $S$ be a closed surface of genus at least $2$ and let $\Tei(S)$ be the \Teich space of $X$ with the \Teich metric. Let $\PMF$ be Thurston's boundary of \Teich space consisting of projective measured foliations.  Let $\nu$ be a measure on $\PMF$ which is either a normalized Thurston measure or the stationary measure associated to a finitely supported probability measure $\mu$ on the mapping class group $\MCG(S)$ such that the semigroup generated by the support of $\mu$ is a group containing at least two independent pseudo-Anosov elements. Then $\nu$ gives full measure to foliations 
associated to sublinearly Morse geodesics rays.
\end{theorem}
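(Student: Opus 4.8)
The plan is to reduce the Teichm\"uller statement to the abstract geometric criterion mentioned in the abstract: a geodesic ray with statistically many strongly contracting segments is sublinearly contracting (hence sublinearly Morse). Because $\Tei(S)$ with the Teichm\"uller metric is a unique geodesic space (Teichm\"uller's theorem), that criterion applies verbatim. So the real content is to show that, for $\nu$-almost every projective measured foliation $F\in\PMF$, the Teichm\"uller geodesic ray from a fixed basepoint $\go\in\Tei(S)$ determined by $F$ (when it exists and is unique --- i.e. when $F$ is uniquely ergodic, which is a full-measure condition for both Thurston measure and the stationary measure) spends a definite proportion of its length, at all scales, inside the \emph{thick part} $\Tei_\epsilon(S)$ for some $\epsilon>0$. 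The point is Minsky's product-region / thick-part estimates: a segment of a Teichm\"uller geodesic that lies in the $\epsilon$-thick part is strongly contracting, with contraction constant depending only on $\epsilon$ (this is the Teichm\"uller-space analogue of the rank-1 phenomenon, due to Minsky and Rafi). Thus ``statistically many strongly contracting segments'' follows once we know the geodesic returns to a fixed thick part with positive frequency along a sublinearly-dense set of times.

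First I would treat the random walk case. Here the relevant input is the theory of Kaimanovich, and of Maher--Tiozzo, on random walks on the mapping class group: the sample path $\go, g_1\go, g_2\go,\dots$ tracks a Teichm\"uller geodesic ray to a boundary point $F\in\PMF$ which is $\nu$-a.s. uniquely ergodic and minimal, and the action of the support generates a group with two independent pseudo-Anosovs, hence is nonelementary. By the work on positive drift and linear progress with exponential decay (Maher, Tiozzo, Sisto), the sample path makes linear progress in $\Tei(S)$ and the nearest-point projections of $g_n\go$ to the limiting geodesic are $\asymp n$; moreover the geodesic tracks the sample path sublinearly. The key additional fact is the ``thick part'' statement: because $\mu$ is finitely supported, increments are bounded, and by the work on the curve complex and subsurface projections (Maher--Tiozzo, Dowdall--Duchin--Masur style arguments) the sample path, and hence the limiting geodesic, has bounded subsurface projections coarsely --- equivalently, after discarding a sublinear set of bad times, it stays in a fixed thick part $\Tei_\epsilon(S)$ along a set of times of positive density. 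Feeding this into the abstract criterion gives that the limiting ray is $\kappa$-Morse for an appropriate sublinear $\kappa$, for $\nu$-a.e. $F$.

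Next the Thurston-measure case. Here I would invoke the classical fact, going back to Masur's work on the Teichm\"uller geodesic flow and its generalizations (Kerckhoff--Masur--Smillie, and in the measured-foliation formulation), that Thurston measure on $\PMF$ is, up to the standard disintegration, the conditional-at-infinity measure of the Teichm\"uller geodesic flow with respect to the measure of maximal entropy (the Masur--Veech/Liouville-type measure on the unit tangent bundle $Q^1\Tei(S)$). Since that flow-invariant measure is ergodic and gives the thick part $Q^1\Tei_\epsilon(S)$ positive measure for $\epsilon$ small, the Birkhoff ergodic theorem gives that $\nu$-a.e. geodesic ray spends asymptotic proportion arbitrarily close to $1$ in $\Tei_\epsilon(S)$; in particular the set of thick times has density $1$, which is far more than the ``statistical'' hypothesis the abstract criterion needs. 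Strong contraction of the thick segments (Minsky/Rafi) then again yields the conclusion via the criterion. Note that along the way one uses that Thurston measure is supported on uniquely ergodic, minimal foliations, which are exactly the ones defining a genuine geodesic ray; this is Masur's theorem.

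The main obstacle, in my view, is the passage from ``the random path stays coarsely in the thick part / has bounded combinatorics'' to a \emph{quantitative, all-scales, positive-density} statement about the limiting Teichm\"uller geodesic of the precise type the abstract criterion consumes --- in particular controlling the sublinear error function $\kappa$ uniformly. This requires combining the exponential decay estimates for linear progress with Rafi's combinatorial model for Teichm\"uller geodesics (relating thin times along the geodesic to large subsurface projections) and then a Borel--Cantelli / large-deviations argument to rule out, almost surely, accumulations of thin times that are denser than sublinear. The Thurston-measure half is comparatively soft once the ergodic-theoretic dictionary is set up; it is the random-walk half, and specifically quantifying the frequency and scale of strongly contracting segments from finitely-supported increments, where the work lies.
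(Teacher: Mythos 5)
Your overall architecture agrees with the paper's: reduce to the criterion that a geodesic with statistically many strongly contracting subsegments is sublinearly Morse (Theorem~\ref{frequentlycontractingimpliessublinearlymorse}), then verify the statistical hypothesis measure-theoretically. Your Thurston-measure half is essentially the paper's proof: ergodicity of the Teichm\"uller flow for the Masur--Veech measure plus the Birkhoff ergodic theorem gives positive density of ``good'' times, and Masur's unique ergodicity theorem handles the identification of boundary points with rays. One difference of mechanism: the paper's certificate for a good time is not membership in a thick part $\Tei_\epsilon(S)$ but proximity of a length-$2L$ window to a $G$-translate of the axis of a \emph{fixed} pseudo-Anosov element (strongly contracting by Minsky), with positivity of the frequency coming from full support of the invariant measure. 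Your thick-part version would also work (thick segments are uniformly contracting by Minsky/Rafi), but the axis formulation is what transfers cleanly to the random-walk setting. Also note that the uniformity of $\kappa$ over $\nu$-a.e.\ ray, which you flag as a difficulty, is handled in the paper by a soft argument (Theorem~\ref{rayisMorse}): separability of the space of sublinear functions plus ergodicity of the boundary action forces a single $\kappa_i$ to work a.e.; no quantitative control is needed.

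The genuine gap is in your random-walk half, which you yourself identify as ``where the work lies'' but do not carry out, and the strategy you sketch for it is both harder than necessary and partly off-target. First, the claim that finitely supported increments give the limiting geodesic ``bounded subsurface projections coarsely'' is not correct: $\nu$-generically the tracked Teichm\"uller geodesic enters arbitrarily thin parts, so no coboundedness statement holds; what is true (and all that is needed) is positive \emph{frequency} of returns to a region near a fixed axis. Second, no exponential decay, large deviations, or Borel--Cantelli argument is required. The paper's route (Proposition~\ref{longfellowtravelbilateralteich} via Lemma~\ref{decaylemma2}) works on the space of \emph{bilateral} sample paths with the measure-preserving ergodic shift $U$: one shows the set $\Lambda(L,C)$ of paths whose limiting geodesic fellow-travels the pseudo-Anosov axis near time $0$ has positive $\overline{P}$-measure (using full support of the harmonic measure on $L(G)$, Kaimanovich--Masur, and the local uniform convergence of geodesics, Lemma~\ref{Teichcontinuity}), and then the Birkhoff ergodic theorem alone converts this into positive density of good times along a.e.\ geodesic, which is exactly what Lemma~\ref{conditiontobefrequentlycontracting} consumes. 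Your proposed route through Rafi's combinatorial model and large deviations may well be completable (it is closer in spirit to Choi's pivoting approach, cited in the paper), but as written it is a plan with its central quantitative step missing, whereas the ergodic-theoretic argument closes the gap with only qualitative inputs.
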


\begin{corollary}\label{Poissonboundaryteich}
Let $\mu$ be a probability measure on $\MCG(S)$ such that the semigroup generated by the support of $\mu$ is a group containing at least two independent pseudo-Anosov elements. Then, for a suitable sublinear function $\kappa$, the $\kappa$-Morse boundary of $\Tei(S)$  is a topological model for the Poisson boundary of $(\MCG(S),\mu)$.
\end{corollary}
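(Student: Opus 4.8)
\emph{Proof strategy for Corollary~\ref{Poissonboundaryteich}.} The argument will run parallel to that for Corollary~\ref{Poissonboundary}, with $\PMF$ playing the role of the visual boundary and Kaimanovich's identification of the Poisson boundary of the mapping class group replacing the \CAT input. Fix a basepoint $o\in\Tei(S)$ and let $\kappa$ be the sublinear function furnished by Theorem~\ref{fullmeasuresublinearteich}. Since $\mu$ is finitely supported it has finite entropy and finite first moment with respect to the Teichm\"uller metric, and since the semigroup generated by $\supp(\mu)$ is a group containing two independent pseudo-Anosov elements the random walk is non-elementary. By the theorems of Kaimanovich and Masur, almost every sample path $(w_n o)$ converges in $\PMF$ to a uniquely ergodic minimal foliation; the hitting measure is the stationary measure $\nu$; and $(\PMF,\nu)$ is a model for the Poisson boundary of $(\MCG(S),\mu)$. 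Theorem~\ref{fullmeasuresublinearteich} upgrades the convergence statement: for $\nu$-a.e.\ point of $\PMF$, the limiting foliation is in addition the endpoint of a $\kappa$-Morse geodesic ray.

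The next step is to produce a Borel, $\MCG(S)$-equivariant map $\Phi\from\pka\Tei(S)\to\PMF$ sending a $\kappa$-Morse ray to its endpoint in Thurston's boundary; that $\kappa$-Morse rays have a well-defined, uniquely ergodic endpoint is part of the structure theory underlying the paper (cf.\ Qing--Rafi). The crucial point is that $\Phi$ is injective on the preimage of the uniquely ergodic locus: if two $\kappa$-Morse rays share a uniquely ergodic endpoint, then by Masur's theorem their forward Teichm\"uller geodesics are strongly asymptotic, hence define the same point of the visual boundary and therefore the same point of $\pka\Tei(S)$. Since $\nu$ gives full mass both to the uniquely ergodic locus and, by Theorem~\ref{fullmeasuresublinearteich}, to the image of $\Phi$, the map $\Phi$ is a measure-space isomorphism from $(\pka\Tei(S),\hat\nu)$ onto $(\PMF,\nu)$, where $\hat\nu:=(\Phi^{-1})_*\nu$. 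Transporting the identification of the Poisson boundary through $\Phi$ then exhibits $(\pka\Tei(S),\hat\nu)$ as a measurable model of the Poisson boundary of $(\MCG(S),\mu)$.

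For the topological refinement I would finally check that almost every sample path $(w_n o)$ converges to a point of $\pka\Tei(S)$ in the Qing--Rafi topology, with limiting distribution $\hat\nu$; this is exactly where sublinear tracking enters, since almost surely $(w_n o)$ stays within sublinear distance of a $\kappa$-Morse ray $\alpha_\omega$ whose $\PMF$-endpoint is the Kaimanovich--Masur limit, and the definition of convergence in $\pka\Tei(S)$ then forces $w_n o\to[\alpha_\omega]$. The map $\omega\mapsto[\alpha_\omega]$ is the required boundary realization, and its pushforward is $\hat\nu$. The step I expect to cost the most effort is verifying that $\Phi$ is a bimeasurable bijection on a set of full $\nu$-measure --- that $\kappa$-Morse rays correspond bijectively and measurably to the uniquely ergodic foliations carrying the stationary mass, and that the Qing--Rafi topology is compatible with convergence of the random walk. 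This leans on the fine geometry of Teichm\"uller geodesics (Masur's recurrence criterion, Rafi's combinatorial description of geodesics via subsurface projections) rather than on soft measure-theoretic considerations.
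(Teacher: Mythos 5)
The one genuine gap is at the very first step: you ``let $\kappa$ be the sublinear function furnished by Theorem~\ref{fullmeasuresublinearteich},'' but that theorem furnishes no single function. It only says that $\nu$-a.e.\ point of $\PMF$ is the endpoint of a ray that is $\kappa$-Morse for \emph{some} sublinear $\kappa$ depending on the point, whereas the corollary requires one fixed $\kappa$ that works on a set of full $\nu$-measure --- otherwise there is no single boundary $\pka \Tei(S)$ to serve as the model, and your map $\Phi$ has no fixed domain. The paper closes exactly this gap in Theorem~\ref{rayisMorse}: take a countable family $\{\kappa_i\}$ of sublinear functions such that every sublinear function is dominated by a constant multiple of some $\kappa_i$ (possible by separability), note that each set $A_i$ of $\kappa_i$-Morse directions is $\MCG(S)$-invariant with $\nu\big(\bigcup_i A_i\big)=1$, and invoke ergodicity of the boundary action (here, ergodicity of the Teichm\"uller geodesic flow, resp.\ double ergodicity of the stationary measure) to conclude $\nu(A_i)=1$ for a single $i$. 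This uniformization is not cosmetic; it is precisely why the statement reads ``for a suitable sublinear function $\kappa$,'' and your proposal assumes it rather than proving it.

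Apart from this, your route is genuinely different from the paper's. You propose to transport the Kaimanovich--Masur identification of $(\PMF,\nu)$ as the Poisson boundary through an equivariant, a.e.\ bijective map $\Phi \from \pka\Tei(S) \to \PMF$, and then to verify convergence of sample paths in the Qing--Rafi topology by hand via sublinear tracking. The paper instead feeds three inputs --- sublinear tracking (Tiozzo), temperateness of the $\MCG(S)$-action on $\Tei(S)$ (Athreya--Bufetov--Eskin--Mirzakhani), and the single $\kappa$ from Theorem~\ref{rayisMorse} --- into the general criterion of Theorem~\ref{T:poiss-general} (Theorem 6.2 of \cite{QRT2}), which delivers both the almost sure convergence in $\pka\Tei(S)$ and the Poisson boundary identification in one stroke, with no need to construct $\Phi$ or to re-use the Kaimanovich--Masur maximality statement. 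Your transport argument is workable in principle (strong asymptoticity of rays with the same uniquely ergodic vertical foliation does give injectivity of $\Phi$ on a full-measure set, by Masur's theorem), but it re-proves by hand what the cited general theorem already packages; once the single-$\kappa$ issue is repaired, either route completes the proof.
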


We prove genericity of sublinearly Morse directions of the visual boundary by providing a criterion asserting that a geodesic ray containing enough strongly contracting subsegments is sublinearly contracting. 
More precisely, for any proper geodesic metric space $X$, we say a unit speed parametrized geodesic ray $\tau \from [0, \infty) \to X$ is 
\emph{$(N,C)$--frequently contracting} for constant $N,C>0$ if the following holds. 
For each $L>0$ and $\theta\in (0,1)$ there is an $R_0>0$ such that for $R>R_0$ and $t>0$ there is an interval 
of time $[s-L, s+L] \subset [t, t+ \theta R]$ and an $N$--contracting geodesic $\gamma$ 
(see Definition~\ref{Def:Contracting}) such that, 
\[
u \in [s-L , s+L]
\qquad\Longrightarrow \qquad
d(\tau(u), \gamma) \leq C. 
\]

\begin{theorem}\label{frequentlycontractingimpliessublinearlymorse} 
A frequently contracting geodesic in any proper geodesic metric space is sublinearly Morse.
\end{theorem}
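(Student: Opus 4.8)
The plan is to show that a frequently contracting ray $\tau$ is $\kappa$--contracting for an appropriate sublinear function $\kappa$, and then invoke the equivalence (established in \cite{subcontracting}) between being $\kappa$--contracting and being $\kappa$--Morse. So the real content is a quantitative estimate: a geodesic ray with statistically many uniformly strongly contracting subsegments has nearest-point projections to balls whose diameter grows sublinearly in the distance to $\tau$. First I would fix notation: for a point $x$ at distance $d(x,\tau(0)) = r$ from the base of the ray, and a ball $B = B(x, \rho)$ with $\rho \le r$, I want to bound $\diam\bigl(\pi_\tau(B)\bigr)$, where $\pi_\tau$ is nearest-point projection onto $\tau$. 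The claim will be that this diameter is at most $\kappa(r)$ for a sublinear $\kappa$ depending only on the contraction data $(N,C)$.

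The key mechanism is a \emph{fellow-traveling-and-escaping} argument applied to the strongly contracting segments supplied by frequent contraction. Suppose $\pi_\tau(B)$ contains two times $t_1 < t_2$ with $t_2 - t_1 = D$ large. Apply the frequent contracting property with a window length $L$ comparable to $D$ and a small $\theta$: inside the time interval following $t_2$ (of length $\theta R$ where $R$ is chosen $\asymp t_2$), there is a subinterval $[s-L, s+L]$ and an $N$--contracting geodesic $\gamma$ with $\tau$ staying within $C$ of $\gamma$ on that window. I would then compare the geodesic $[x, \tau(t_1)]$ with $\gamma$: since $\tau(t_1)$ and $x$ both project (coarsely) far back along $\tau$, the geodesic $[x,\tau(t_1)]$ must pass within bounded distance of $\gamma$ on a long stretch, and strong contraction of $\gamma$ forces this to be impossible once $D$ — equivalently the length of the window — exceeds a bound that depends only on $N$, $C$, $\rho$, and crucially on how $R_0(L,\theta)$ grows. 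The standard strongly-contracting estimate (projections onto an $N$--contracting geodesic of sets far from it have diameter $\le$ const) gives the contradiction: a path from $x$ to $\tau(t_1)$ of length $\le 2\rho + D \le 3r$ cannot fellow-travel $\gamma$ for length $\gtrsim D$ unless it comes within bounded distance, but then $x$ would be within bounded distance of $\tau$ much later than time $t_1$, contradicting $x$ projecting near $t_1$.

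Turning this into a clean sublinear bound is the main obstacle. The delicate point is bookkeeping the function $R_0 = R_0(L, \theta)$: to control $\diam \pi_\tau(B(x,\rho))$ at radius $r = d(x,\tau(0))$ I must choose $\theta$ so that $\theta R \asymp r$ with $R \asymp r$, hence $\theta$ bounded below, and $L$ of size equal to the diameter I am trying to bound; the constraint $R > R_0(L,\theta)$ then says that the diameter bound is only available once $r$ exceeds $R_0(L,\theta)$, which inverts to: for $r$ large, the largest $L$ I can use satisfies $R_0(L, \theta_0) \le r$, and this defines $\kappa(r)$ as (essentially) the inverse of $L \mapsto R_0(L,\theta_0)$. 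Since $R_0 \to \infty$ as $L \to \infty$ for fixed $\theta$ — this is forced by the definition of frequent contraction being a genuine ``for each $L$ there exists $R_0$'' statement — the resulting $\kappa$ is sublinear; I would record this as a short lemma about inverting monotone functions that tend to infinity. The remaining steps are routine: (i) verify $\kappa$ can be taken concave and increasing without loss of generality; (ii) check the contradiction argument is uniform in the basepoint and in $x$, using properness and that $\tau$ is a genuine geodesic (unique geodesics are not needed here, only in the converse direction elsewhere in the paper); and (iii) cite \cite{subcontracting} to pass from $\kappa$--contracting to $\kappa$--Morse, completing the proof.
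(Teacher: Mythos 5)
Your overall architecture matches the paper's: reduce to showing that $\tau$ is $\kappa$--contracting for a sublinear $\kappa$ extracted from the quantifiers in the definition of frequent contraction, use the bounded--geodesic--image property of the strongly contracting windows, and then cite the equivalence with $\kappa$--Morse. However, two steps in the middle do not work as written. First, you place the contracting window in ``the time interval following $t_2$,'' i.e.\ beyond both projection times. A strongly contracting subsegment of $\tau$ lying entirely after $\pi_\tau(x)$ and $\pi_\tau(y)$ imposes no constraint on the ball: both endpoints of $[x,\tau(t_1)]$ project to $\tau$ (and hence to $\gamma$) before the window, so by the bounded--geodesic--image lemma that geodesic merely has bounded projection to $\gamma$ and never needs to approach it --- there is no long stretch of forced fellow-traveling to contradict. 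The window must sit \emph{between} the two projection times (apply the definition with $t=t_1$ and $\theta R\le t_2-t_1$): then one point projects to $\gamma$ near its entry and the other near its exit (this is the Claim in the proof of Proposition~\ref{Prop:Small-Linear-Contracting}, via Lemma~\ref{Lem:BGIT}), so Lemma~\ref{Lem:Geodesic-Covers} forces $[x,y]$ to cover the whole window up to bounded error, giving $d(x,y)\ge d(x,\tau)+L-O_{N,C}(1)>d(x,\tau)$ once $L$ exceeds a constant depending only on $N$ and $C$.

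Second, your parameter bookkeeping is inverted. In the correct argument $L$ is a \emph{fixed} constant determined by $(N,C)$ (just large enough to make the displacement above positive); the quantity that bounds $\diam \pi_\tau(B)$ is $\theta R\asymp\theta r$ (the length of the interval in which a window is guaranteed), not $L$ (the length of the window); and sublinearity comes from letting $\theta\to 0$ as $r\to\infty$ at a rate governed by $\theta\mapsto R_0(L_0,\theta)$, roughly $\kappa(r)\asymp\theta(r)\,r$ with $\theta(r)=\inf\{\theta: R_0(L_0,\theta)\le r\}$. With $\theta$ bounded below, as you propose, one can only ever conclude the linear bound $\diam\le\theta_0 r$, since a window is guaranteed only inside intervals of length $\theta_0 R$ with $R$ at least comparable to $t_1\asymp r$; and taking $L$ comparable to the diameter $D$ you are trying to bound both introduces a circularity and guarantees the $2L$--window only somewhere in an interval of length $\asymp r\gg D$, where it need not meet $[\pi_\tau(x),\pi_\tau(y)]_\tau$ at all. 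The paper sidesteps the explicit inversion by proving the ``for every $\theta$ there is $R_0$'' statement (Proposition~\ref{Prop:Small-Linear-Contracting}) and then arguing by contradiction in Theorem~\ref{main-frequentlycontractingimpliessublinearlymorse}; your explicit construction of $\kappa$ is viable, but only after swapping the roles of $L$ and $\theta$.
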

We then use ergodic theoretic methods to prove genericity of frequently contracting geodesics in the context of rank-1 \CAT spaces and the \Teich space. 
For the Patterson-Sullivan (or normalized Thurston) measure, genericity of frequently contracting directions  is a consequence of the Birkhoff Ergodic Theorem and the ergodicity of the geodesic flow in rank-1 \CAT spaces and the \Teich space.
For stationary measures coming from random walks, the genericity of frequently contracting geodesics is derived from the double ergodicity of the Poisson boundary and is facilitated by a quantitative version of an argument (in the \Teich setting) of Kaimanovich-Masur \cite{Kaimanovich-Masur}, which was first used by Baik-Gekhtman-\Ham \cite{BGH}.

\subsection*{Related results} Kaimanovich \cite{KaiHyp} proved that the Poisson boundary of hyperbolic groups are realized on their Gromov boundary. For \CAT groups, Karlson-Margulis \cite{KarlssonMargulis} showed that random walk tracks geodesic rays sublinearly and  thus the visual boundary realizes the Poisson boundary of \CAT spaces on which a \CAT group acts geometrically. However, visual boundaries are in general not QI-invariant and therefore not group-invariant, as shown by Croke-Kleiner \cite{CK00}. Qing-Rafi-Tiozzo \cite{subcontracting} constructed $\kappa$--Morse boundaries for \CAT spaces that are QI-invariant and in the case of right-angled Artin groups, do realizes their Poisson boundaries. For mapping class groups, Kaimanovich-Masur showed that uniquely ergodic projective measured foliations with the corresponding harmonic measure can be identified with the Poisson boundary of random walks; Qing-Rafi-Tiozzo \cite{QRT2} showed that, when $\kappa=\log t$, the $kappa$-boundary of the Cayley graph of the mapping class group can be identified with the Poisson boundary of the associated random walks.

We expect Theorems \ref{fullPSmeasuresublinear} and \ref{fullRWmeasuresublinear} and Corollary \ref{Poissonboundary} to hold in a more general context of actions admitting a strongly contracting element.

In fact, versions of our Theorems~\ref{fullRWmeasuresublinear} and Theorem~\ref{fullmeasuresublinearteich} concerning stationary measures were recently obtained in this more general setting by Inhyeok Choi \cite{Choi}, who in place of our ergodic theoretic and boundary techniques uses a pivoting technique developed by Gouezel. Meanwhile, a Patterson-Sullivan theory on a certain quotient of the horofunction boundary for spaces admitting nonelementary group actions with contracting elements was recently obtained by Coulon \cite{Coulon} and Yang \cite{Yang2022}. However a critical ingredient of our Theorem~\ref{fullPSmeasuresublinear} and Corollary \ref{Poissonboundary}  involving Patterson-Sullivan (or Thurston) measures, namely the ergodicity of the square of the Patterson-Sullivan measure, is not known in that setting.



\section{Sublinearly Morse quasi-geodesic rays in proper metric space}\label{sublinear}
In geometric group theory, we are mainly interested in geometric properties of the associated spaces that are group-invariant. In the setting of finitely generated groups, group-invariance can be 
interpreted as \emph{quasi-isometries} between metric spaces and objects, which we introduce now.
\subsection{Quasi-isometries of groups and metric spaces}

\begin{definition}[Quasi-isometric embedding] \label{Def:Quasi-Isometry} 
Let $(X , d_X)$ and $(Y , d_Y)$ be metric spaces. For constants $\kk \geq 1$ and
$\sK \geq 0$, we say a map $\Phi \from X \to Y$ is a 
$(\kk, \sK)$--\textit{quasi-isometric embedding} if, for all $x_1, x_2 \in X$
$$
\frac{1}{\kk} d_X (x_1, x_2) - \sK  \leq d_Y \big(\Phi (x_1), \Phi (x_2)\big) 
   \leq \kk \, d_X (x_1, x_2) + \sK.
$$
If, in addition, every point in $Y$ lies in the $\sK$--neighbourhood of the image of 
$\Phi$, then $f$ is called a $(\kk, \sK)$--quasi-isometry. When such a map exists, $X$ 
and $Y$ are said to be \textit{quasi-isometric}. 

A quasi-isometric embedding $\Phi^{-1} \from Y \to X$ is called a \emph{quasi-inverse} of 
$\Phi$ if for every $x \in X$, $d_X(x, \Phi^{-1}\Phi(x))$ is uniformly bounded above. 
In fact, after replacing $\kk$ and $\sK$ with larger constants, we assume that 
$\Phi^{-1}$ is also a $(\kk, \sK)$--quasi-isometric embedding, 
\[
\forall x \in X \quad d_X\big(x, \Phi^{-1}\Phi(x)\big) \leq \sK \qquad\text{and}\qquad
\forall y \in Y \quad d_Y\big(y, \Phi\,\Phi^{-1}(x)\big) \leq \sK.
\]
\end{definition}


\subsection*{Geodesics and quasi-geodesic rays and segments} \label{Sec:Quadi-Geodesic}
Fix a base point $\go \in X$. A \emph{geodesic ray} in $X$ is an isometric embedding 
$\tau \from [0, \infty) \to X$ such that $\tau(0) = \go$. That is, by convention, a geodesic ray is always 
assumed to start from this fixed base-point. A \emph{quasi-geodesic ray} is a continuous quasi-isometric 
embedding $\beta \from [0, \infty) \to X$ such that $\beta(0) =\go$. That is, there are constants 
$\qq \geq 1$, $\sQ>0$ such that, for $s, t \in [0, \infty)$, we have 
\[
\frac{|s-t|}{\qq} - \sQ  \leq d_X \big(\beta (s), \beta(t)\big) 
   \leq \qq \, |s-t|+ \sQ.	
\]
The additional assumption that quasi-geodesics are continuous is not necessary,
but it is added for convenience and to make the exposition simpler. 
One can always adjust a quasi-isometric embedding slightly to make it continuous 
(see  \cite[Lemma III.1.11]{BH1}). 

Similar to above, a \emph{geodesic segment} is an isometric embedding 
$\tau \from [s,t] \to X$ and a \emph{quasi-geodesic segment} is a continuous 
quasi-isometric embedding $\beta \from [s,t] \to X$.

\subsubsection*{Notations}
We adopt the following notation for lines and segments in this paper. Suppose $\beta$ is a specified path, then
 \[[x, y]_{\beta} : \text{ the segment of } \beta \text{ from }x \in \beta \text{ to } y \in \beta. \]   
 In the special case where $\beta$ is a geodesic, we suppress the subscript, i.e. we use $[x, y]$ denote geodesic segments between the two points. If $\beta$ emanates from the base-point, then 
 \[ \beta|_{\rr}: \text{ the point on } \beta \text{  that is distance } \rr \text{ from } \go.\]

\subsection*{Contracting geodesics}
Let $Z$ be a closed subset of $X$ and $x$ be a point in $X$.  By $d(x, Z)$ we mean the set-distance 
between $x$ and $Z$, i.e. 
\[
d(x, Z) : = \inf \big \{ d(x, y) \st y \in Z \big \}. 
\]
Let
\[ \pi_{Z}(x) : = \big \{ y \st d(x, y) = d(x, Z) \big \} \]
be the set of nearest-point projections from $x$ to $Z$. Since $X$ is a proper metric space, 
$\pi_{Z}(x)$ is non empty. We refer to $\pi_{Z}(x) $ as the \emph{projection set} of $x$ to $Z$. 
For a quasi-geodesic $\beta$ and $x \in X$, we write $x_{\beta}$ to denote \emph{any} point in 
the projection set of $x$ to $\beta$. 

\begin{definition} \label{Def:Contracting}
We say a closed subset $Z \subset X$ is \emph{$N$--contracting} for a constant $N>0$ if,
for all pairs of points $x, y \in X$, we have
\[
d(x, y) < d(x, Z) \quad  \Longrightarrow  \quad d(x_{Z}, y_{Z}) \leq  N.
\]
Any such $N$ is called a \emph{contracting constant} for $Z$.
\end{definition}

\subsection{$\kappa$-Morse and $\kappa$-contracting sets.}
Now we introduce a large class of quasi-geodesic rays that are quasi-isometry invariant. Intuitively, these quasi-geodesics have a weak Morse property, i.e. their quasi-geodesics stay close 
asymptotically. To begin with, we fix a function that is sublinear in the following sense:
 
\subsubsection{Sublinear functions}
We fix a function 
\[
\kappa \from [0,\infty) \to [1,\infty)
\] 
that is monotone increasing, concave and sublinear, that is
\[
\lim_{t \to \infty} \frac{\kappa(t)} t = 0. 
\]
Note that using concavity, for any $a>1$, we have
\begin{equation} \label{Eq:Concave}
\kappa(a t) \leq a \left( \frac 1a \, \kappa (a t) + \left(1- \frac 1a\right) \kappa(0) \right) 
\leq a \, \kappa(t).
\end{equation}


\begin{remark}
The assumption that $\kappa$ is increasing and concave makes certain arguments
cleaner, otherwise they are not really needed. One can always replace any 
sublinear function $\kappa$, with another sublinear function $\overline \kappa$
so that $\kappa(t) \leq \overline \kappa(t) \leq \sC \, \kappa(t)$ for some constant $\sC$ 
and $\overline \kappa$ is monotone increasing and concave. For example, define 
\[
\overline \kappa(t) = \sup \Big\{ \lambda \kappa(u) + (1-\lambda) \kappa(v) \ST 
\ 0 \leq \lambda \leq 1, \ u,v>0, \ \text{and}\ \lambda u + (1-\lambda)v =t \Big\}.
\]
The requirement $\kappa(t) \geq 1$ is there to remove additive errors in the definition
of $\kappa$--contracting geodesics. 
\end{remark}

\begin{definition}[$\kappa$--neighborhood]  \label{Def:Neighborhood} 
For a closed set $Z$ and a constant $\nn$ define the $(\kappa, \nn)$--neighbourhood 
of $Z$ to be 
\[
\calN_\kappa(Z, \nn) = \Big\{ x \in X \ST 
  d_X(x, Z) \leq  \nn \cdot \kappa(x)  \Big\}.
\]

\end{definition} 

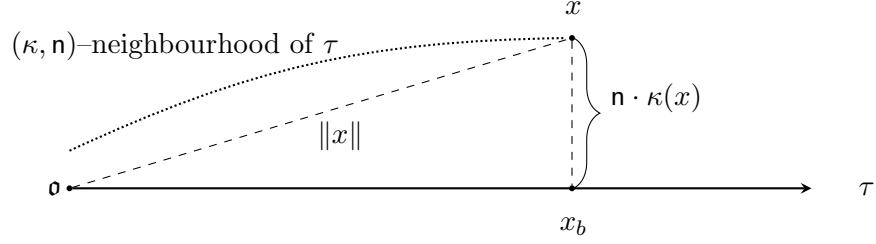
\begin{figure}[h!]
\begin{tikzpicture}
 \tikzstyle{vertex} =[circle,draw,fill=black,thick, inner sep=0pt,minimum size=.5 mm] 
[thick, 
    scale=1,
    vertex/.style={circle,draw,fill=black,thick,
                   inner sep=0pt,minimum size= .5 mm},
                  
      trans/.style={thick,->, shorten >=6pt,shorten <=6pt,>=stealth},
   ]

 \node[vertex] (a) at (0,0) {};
 \node at (-0.2,0) {$\go$};
 \node (b) at (10, 0) {};
 \node at (10.6, 0) {$\tau$};
 \node (c) at (6.7, 2) {};
 \node[vertex] (d) at (6.68,2) {};
 \node at (6.7, 2.4){$x$};
 \node[vertex] (e) at (6.68,0) {};
 \node at (6.7, -0.5){$x_{b}$};
 \draw [-,dashed](d)--(e);
 \draw [-,dashed](a)--(d);
 \draw [decorate,decoration={brace,amplitude=10pt},xshift=0pt,yshift=0pt]
  (6.7,2) -- (6.7,0)  node [black,midway,xshift=0pt,yshift=0pt] {};

 \node at (7.8, 1.2){$\nn \cdot \kappa(x)$};
 \node at (3.6, 0.7){$\Norm x$};
 \draw [thick, ->](a)--(b);
 \path[thick, densely dotted](0,0.5) edge [bend left=12] (c);
\node at (1.4, 1.9){$(\kappa, \nn)$--neighbourhood of $\tau$};
\end{tikzpicture}
\caption{A $\kappa$-neighbourhood of a geodesic ray $\tau$ with multiplicative constant $\nn$.}
\end{figure}

In this paper, $Z$ is either a geodesic or a quasi-geodesic. That is, we can write 
$\calN_{\kappa}(\tau, \nn)$ to mean the $(\kappa, \nn)$--neighborhood of the image of 
the geodesic ray $\tau$. Or, we can use phrases like 
``the quasi-geodesic $\beta$ is $\kappa$--contracting" or 
``the geodesic $\tau$ is in a $(\kappa, \nn)$--neighbourhood of the geodesic $c$". 

\begin{definition} \label{Def:Fellow-Travel}
Let $\beta$ and $\gamma$ be two quasi-geodesic rays in $X$. If $\beta$ is in some 
$\kappa$--neighbourhood of $\gamma$ and $\gamma$ is in some 
$\kappa$--neighbourhood of $\beta$, we say that $\beta$ and $\gamma$ 
\emph{$\kappa$--fellow travel} each other. This defines an equivalence
relation on the set of quasi-geodesic rays in $X$ (to obtain transitivity, one needs to change 
$\nn$ of the associated $(\kappa, \nn)$--neighbourhood). We refer to such an equivalence
class as a \emph{$\kappa$--equivalence class of quasi-geodesics}.
We denote the $\kappa$--equivalence class that contains $\beta$ by $[\beta]$ or we use 
the notation $\bfb$ for such an equivalence
class when no quasi-geodesic in the class is given. 
\end{definition}
A metric space is called a unique geodesic space if any two points can be connected by a unique geodesic.
\begin{lemma} \label{Lem:Unique} \cite{subcontracting}
Let $b \from [0,\infty) \to X$ be a geodesic ray in a unique geodesic space $X$. Then $b$ is the unique geodesic 
ray in any $(\kappa, \nn)$--neighbourhood of $b$ for any $\nn$. That is to say, distinct geodesic rays do not $\kappa$--fellow travel each other.
\end{lemma}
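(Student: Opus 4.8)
The plan is to prove the (logically) stronger statement: any geodesic ray $b'$ lying in some $(\kappa,\nn)$--neighbourhood of $b$ must equal $b$. The assertion that distinct geodesic rays do not $\kappa$--fellow travel then follows at once, since a $\kappa$--fellow traveller of $b$ is, in particular, contained in a $\kappa$--neighbourhood of $b$. The first step is to turn the neighbourhood hypothesis into a pointwise estimate. Since $b$ and $b'$ are unit-speed geodesic rays based at $\go$, we have $\Norm{b(t)}=\Norm{b'(t)}=t$. The containment $b'(t)\in\calN_\kappa(b,\nn)$ yields a point $b(u)$ with $d\big(b'(t),b(u)\big)\le\nn\,\kappa(t)$, and comparing distances to $\go$ forces $|u-t|=\big|\Norm{b(u)}-\Norm{b'(t)}\big|\le d\big(b(u),b'(t)\big)\le\nn\,\kappa(t)$; hence
\[
d\big(b(t),b'(t)\big)\;\le\;d\big(b'(t),b(u)\big)+d\big(b(u),b(t)\big)\;\le\;2\nn\,\kappa(t),
\]
which is sublinear in $t$.

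The second step uses uniqueness of geodesics to locate a potential disagreement. If $b(t_1)=b'(t_1)$ for some $t_1>0$, then $b|_{[0,t_1]}$ and $b'|_{[0,t_1]}$ are two geodesics joining $\go$ to this common point, hence coincide; so $\{t:b(t)=b'(t)\}$ is an initial segment $[0,T^\ast]$, and if $b\neq b'$ then $T^\ast<\infty$ and we may choose $t_0>0$ with $b(t_0)\neq b'(t_0)$, writing $\delta:=d\big(b(t_0),b'(t_0)\big)>0$.

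The heart of the matter is that two distinct unit-speed geodesic rays issuing from a common point diverge at least linearly, which is incompatible with the sublinear bound above. In a \CAT space (or, more generally, a Busemann space) this is immediate: the function $f(t):=d\big(b(t),b'(t)\big)$ is convex with $f(0)=0$, so for $t\ge t_0$ convexity gives $f(t)\ge\tfrac{\delta}{t_0}\,t$, contradicting $f(t)\le 2\nn\,\kappa(t)=o(t)$. Hence $b=b'$, which proves the lemma in this setting.

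For a general unique geodesic (proper) space — which is the level of generality needed for the \Teich applications, where the metric is not convex — the same linear-divergence conclusion is what is recorded in \cite{subcontracting}. Here one would first reduce, using the maximality of $T^\ast$ together with \eqref{Eq:Concave}, to the case where $b$ and $b'$ part immediately at the basepoint, and then combine uniqueness of geodesics with an Arzelà–Ascoli compactness argument applied to the geodesic segments $[b(t_0),b'(T)]$ as $T\to\infty$ to exclude that two rays splitting at the origin can remain sublinearly close. I expect this last point — establishing that distinct geodesic rays from a common point diverge at least linearly (equivalently, faster than any sublinear function) without the aid of metric convexity — to be the only real obstacle; everything else is bookkeeping with the definitions.
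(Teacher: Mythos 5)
Your first two steps are correct and efficient: the pointwise bound $d\big(b(t),b'(t)\big)\le 2\nn\,\kappa(t)$ follows exactly as you say (properness guarantees the nearest point $b(u)$ exists, and comparing norms gives $|u-t|\le \nn\,\kappa(t)$), and unique geodesics do force the coincidence set to be an initial segment. Your third step, convexity of $t\mapsto d(b(t),b'(t))$ forcing linear divergence, is precisely the argument in the cited source \cite{subcontracting}, where the lemma is stated and proved for \CAT spaces. So for the \CAT case your proof is complete and is the same as the paper's.

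The gap is exactly where you suspect it, and it is a real one: the lemma as stated here claims the conclusion for an arbitrary unique geodesic space, and nothing in your proposal (or, for that matter, in the cited reference) establishes that two distinct unit-speed geodesic rays from a common basepoint in a general unique geodesic space must diverge superlinearly. This is not a formal consequence of uniqueness of geodesics between points: uniqueness controls geodesic \emph{segments} with common endpoints, but places no constraint on how slowly two rays may separate once they have split. Your proposed repair via Arz\'ela--Ascoli does not close this: a subsequential limit of the segments $[b(t_0),b'(T)]$ as $T\to\infty$ is merely \emph{some} geodesic ray emanating from $b(t_0)$; without a convexity, thinness, or contraction hypothesis you have no control on the distance from interior points of $[b(t_0),b'(T)]$ to either $b$ or $b'$ (that control is exactly what the Morse property would supply, which is what is being proved), so the limit ray cannot be compared to $b$ or $b'$ and no contradiction results. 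For the paper's actual applications one needs the statement only for \CAT spaces (where your convexity argument works) and for \Teich space with the \Teich metric (where convexity of the distance function fails and a separate, Teichm\"uller-specific argument is required, e.g.\ via quadratic differentials and Masur's results on uniquely ergodic foliations). So: your proof is correct and matches the source in the \CAT setting, but the statement in the stated generality is not proved, and your own sketch for that generality would not go through as written.
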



We recall the definition of $\kappa$--contracting and $\kappa$--Morse sets from \cite{QRT2}.

\begin{definition}[weakly $\kappa$--Morse] \label{Def:W-Morse} 
We say a closed subset $Z$ of $X$ is \emph{weakly $\kappa$--Morse} if there is a function
\[
\mm_Z \from \RR_+^2 \to \RR_+
\]
so that if $\beta \from [s,t] \to X$ is a $(\qq, \sQ)$--quasi-geodesic with end points 
on $Z$ then
\[
\beta[s,t]  \subset \calN_{\kappa} \big(Z,  \mm_Z(\qq, \sQ)\big). 
\]
We refer to $\mm_{Z}$ as the \emph{Morse gauge} for $Z$. We always assume
\begin{equation}
\mm_Z(\qq, \sQ) \geq \max(\qq, \sQ). 
\end{equation} 
\end{definition}

\begin{definition}[Strongly $\kappa$--Morse] \label{Def:S-Morse} 
We say a closed subset $Z$ of $X$ is \emph{strongly $\kappa$--Morse} if there is a function 
$\mm_Z\from \RR^2 \to \RR$ such that, for every constants $\rr>0$, $\nn>0$ and every
sublinear function $\kappa'$, there is an $\sR= \sR(Z, \rr, \nn, \kappa')>0$ where the 
following holds: Let $\eta \from [0, \infty) \to X$ be a $(\qq, \sQ)$--quasi-geodesic ray 
so that $\mm_Z(\qq, \sQ)$ is small compared to $\rr$, let $t_\rr$ be the first time 
$\Norm{\eta(t_\rr)} = \rr$ and let $t_\sR$ be the first time $\Norm{\eta(t_\sR)} = \sR$. Then
\[
d_X\big(\eta(t_\sR), Z\big) \leq \nn \cdot \kappa'(\sR)
\quad\Longrightarrow\quad
\eta[0, t_\rr] \subset \calN_{\kappa}\big(Z, \mm_Z(\qq, \sQ)\big). 
\]
\end{definition} 

\begin{definition}[$\kappa$--contracting] \label{Def:kappa-Contracting}
Recall that, for $x \in X$, we have $\Norm{x} = d_X(\go, x)$. 
For a closed subspace $Z$ of $X$, we say $Z$ is \emph{$\kappa$--contracting} if there 
is a constant $\cc_Z$ so that, for every $x,y \in X$
\[
d_X(x, y) \leq d_X( x, Z) \quad \Longrightarrow \quad
\diam_X \big( x_Z \cup y_Z \big) \leq \cc_Z \cdot \kappa(\Norm x).
\]
To simplify notation, we often drop $\Norm{\param}$. That is, for $x \in X$, we define
\[
\kappa(x) := \kappa(\Norm{x}). 
\]
\end{definition}

\begin{theorem}[\cite{QRT2}]
Let  $X$ be a proper geodesic space and let $\tau$ be a quasi-geodesic ray in $X$. 
Then 
\begin{enumerate}
\item If $\tau$ is $\kappa$-contracting then $\tau$ is both weakly and strongly 
$\kappa$-Morse.
\item $\tau$ is $\kappa$-weakly Morse if an only if $\tau$ is strongly $\kappa$-Morse.
\end{enumerate}
\end{theorem}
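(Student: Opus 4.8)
The final statement bundles three implications, and I would organize the argument around them. Writing the properties as $\kappa$--contracting, weakly $\kappa$--Morse, and strongly $\kappa$--Morse, the plan is to prove (A) $\kappa$--contracting $\Rightarrow$ weakly $\kappa$--Morse, (B) weakly $\kappa$--Morse $\Rightarrow$ strongly $\kappa$--Morse, and (C) strongly $\kappa$--Morse $\Rightarrow$ weakly $\kappa$--Morse. Statement~(1) then follows since (A) gives the weak conclusion while (A) followed by (B) gives the strong one; statement~(2) is exactly (B) together with (C). Throughout I use that the relevant sets $Z$ pass through the base point $\go$, that $\kappa$ is increasing, concave and $\ge 1$, and the concavity estimate $\kappa(at)\le a\kappa(t)$ recorded in \eqref{Eq:Concave}; the base-point normalizations and the ``first time at radius $\rr$'' bookkeeping I would dispatch as routine.

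For (A) I would run the classical contracting-implies-Morse excursion estimate, carrying the sublinear factor. Let $\beta$ be a $(\qq,\sQ)$--quasi-geodesic with endpoints $p,p'\in Z$ and let $x$ realize the maximal distance $D=d(x,Z)$ over $\beta$. The contracting hypothesis forces every point within distance $D$ of $x$ to project into a subset of $Z$ of diameter $O(\cc_Z\,\kappa(\Norm{x}))$, so the portion of $\beta$ inside the ball of radius $D$ about $x$ has tightly controlled projection. Decomposing the two arcs of $\beta$ running from $x$ out to $p$ and to $p'$ into the dyadic shells on which $d(\param,Z)\approx 2^{-i}D$, the contracting property limits the projection displacement to $O(\cc_Z\kappa)$ per shell while the quasi-geodesic lower bound charges genuine length; summing the resulting geometric series bounds $D$ by $\mm_Z(\qq,\sQ)\,\kappa(\Norm{x})$. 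Since $D=d(x,Z)\le d(x,p)\lesssim\Norm{x}$, every $w\in\beta$ has $\Norm{w}$ comparable to $\Norm{x}$, and concavity converts the apex bound into $d(w,Z)\le\mm_Z(\qq,\sQ)\,\kappa(\Norm{w})$, which is the weak Morse conclusion.

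For (C), the easier half of~(2), I would exploit that a quasi-geodesic with both endpoints on $Z$ automatically satisfies the far-end hypothesis of the strong property. Given such a $\beta$ with endpoints $p,p'\in Z$ (taking $p=\go$, the relevant normalization) and a point $w\in\beta$, I set $\rr\approx\Norm{w}$, extend $\beta$ to a quasi-geodesic ray $\eta$ from $\go$, and apply the strong property at $\sR=\Norm{p'}$, where $\eta(t_\sR)=p'\in Z$ so that $d(\eta(t_\sR),Z)=0\le\nn\kappa'(\sR)$ trivially for any $\nn,\kappa'$. If $\Norm{p'}$ exceeds the threshold $\sR(\rr,\nn,\kappa')$ furnished by the strong property, it returns $w\in\calN_\kappa(Z,\mm_Z(\qq,\sQ))$ directly; otherwise $\beta$ is confined to a ball of radius bounded in terms of $(\qq,\sQ,\rr)$, and the bound $d(w,Z)\le\mathrm{const}\le\mathrm{const}\cdot\kappa(\Norm{w})$ is immediate from $\kappa\ge1$. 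Assembling the two cases produces a weak Morse gauge.

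The substance of the theorem, and the step I expect to be the main obstacle, is (B). The natural first attempt is to close up the ray: given $\eta$ with $d(\eta(t_\sR),Z)\le\nn\kappa'(\sR)$, choose $q\in Z$ within $\nn\kappa'(\sR)$ of $\eta(t_\sR)$ and apply the weak property to the concatenation $\eta[0,t_\sR]\cdot[\eta(t_\sR),q]$, whose endpoints $\go,q$ lie on $Z$. The obstruction is that attaching a segment of length $\nn\kappa'(\sR)$ degrades the additive quasi-geodesic constant from $\sQ$ to $\sQ+O(\nn\kappa'(\sR))$, so the weak property returns only the $\sR$--dependent gauge $\mm_Z\big(\qq,\sQ+O(\nn\kappa'(\sR))\big)$, whereas the strong conclusion must hold with a gauge depending on $(\qq,\sQ)$ alone. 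The resolution, and the genuinely delicate point, is to \emph{localize} this far-end perturbation: because $\kappa'$ is sublinear, the defect $\nn\kappa'(\sR)$ is negligible compared with $\sR$, so at the fixed near scale $\rr\ll\sR$ the attached segment ought not to move $\eta$ beyond a $(\qq,\sQ)$--controlled amount. I would make this precise by a descending-scale argument: starting from the crude $\sR$--dependent bound, iterate the close-up over a controlled number of halved radii down to $\rr$, using sublinearity of $\kappa'$ and $\kappa$ to show that the effective distance of $\eta$ to $Z$ improves relative to scale at each step, so that the accumulated gauge telescopes to one depending only on $(\qq,\sQ)$. The standing hypothesis that $\mm_Z(\qq,\sQ)$ is small compared to $\rr$ is what lets the induction both initialize and terminate with the correct constant, and choosing $\sR=\sR(\rr,\nn,\kappa')$ large enough guarantees enough room for the descent. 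Verifying that this telescoping indeed eliminates all $\sR$--dependence is the crux of the proof.
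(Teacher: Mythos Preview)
The paper does not contain a proof of this theorem: it is stated with the citation \cite{QRT2} and used as a black box. There is therefore no ``paper's own proof'' to compare your proposal against. Your outline is a reasonable sketch of the strategy carried out in \cite{QRT2}, and you have correctly identified that the implication (B) weakly $\kappa$--Morse $\Rightarrow$ strongly $\kappa$--Morse is the substantive step.

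That said, two points in your sketch would need tightening if you were to write this out. In (A), the claim that ``every $w\in\beta$ has $\Norm{w}$ comparable to $\Norm{x}$'' is not generally true: points of $\beta$ near the endpoint on $Z$ can have norm much smaller than the apex $x$. The correct argument bounds $d(w,Z)$ in terms of $\kappa(\Norm{w})$ directly at each $w$, not via the apex. In (C), you ``extend $\beta$ to a quasi-geodesic ray $\eta$ from $\go$''; in an arbitrary proper geodesic space there is no a~priori extension available, and the actual argument in \cite{QRT2} works with the given segment rather than an extension. Neither issue is fatal to the overall approach, but both would have to be handled differently in a full proof.
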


Lastly, a quasi-geodesic is called \emph{sublinearly Morse} if it is $\kappa$-Morse for some sublinearly growing function $\kappa$.
Two parametrized quasi-geodesics $\gamma_1,\gamma_2$ are said to be equivalent if their diverge sublinearly, i.e. \[d(\gamma_{1}(t),\gamma_{1}(t))/t\to 0.\]

\begin{definition}[$\kappa$-Morse boundary and sublinearly Morse boundary]
Given a sublinear function $\kappa$, let $\partial_{\kappa}X$ denote the set of equivalence classes of $\kappa$ Morse quasi-geodesics. Equipped with a coarse version of cone topology, we call this set the \emph{$\kappa$-Morse boundary} of $X$ and denote it $\pka X$ (for more details, see \cite{QRT2}). It is shown in \cite{QRT2} that $X\cup \partial_{\kappa}X$ with the coarse cone topology is a QI-invariant set and a metrizable topological space.

\end{definition}

 \section{Geodesics with enough contracting subsegments are sublinearly Morse}
\label{sublinearandrecurrent}
 In this section, we introduce the notion of frequently contracting geodesics which are geodesics that contain sufficiently many (in a statistical sense) strongly contracting subsegments.
We then give a criterion for a geodesic ray to be frequently contracting which can be conveniently checked using ergodic theory.
(\lemref{conditiontobefrequentlycontracting}). 

Finally, we prove that every frequently contracting geodesic is in fact
sublinearly Morse (Theorem~\ref{frequentlycontractingimpliessublinearlymorse} from the introduction, Theorem~\ref{main-frequentlycontractingimpliessublinearlymorse} in this section). 
In Sections 4-7, we use ergodic theory to show that in $\CAT$ spaces and \Teich space, frequently contracting geodesics are generic in several reasonable senses of the word.
\begin{definition}A unit speed parametrized geodesic ray $\tau \from [0, \infty) \to X$ is 
\emph{$(N,C)$--frequently contracting} for constant $N,C>0$ if the following holds. 
For each $L>0$ and $\theta\in (0,1)$ there is an $R_0>0$ such that for $R>R_0$ and $0 < t < (1-\theta)R$ there is an interval 
of time $[s-L, s+L] \subset [t, t+ \theta R]$ and an $N$--contracting geodesic $\gamma$ such that, 
\[
u \in [s-L , s+L]
\qquad\Longrightarrow \qquad
d(\tau(u), \gamma) \leq C. 
\]
That is, every subsegment of $\tau$ of length $\theta L$ contains a segment of length $R$ that is $C$--close to
an $N$--contracting geodesic $\gamma$. 
A bi-infinite geodesic $\tau$ is frequently contracting if the rays $t\to \tau(t)$ and $t\to \tau(-t)$ are 
both frequently contracting.
\end{definition}

\begin{definition}
If for some $t$, $\tau(t-L,t+L)$ is $C$--close to some $N$--contracting geodesic $\gamma$, 
we say (in analogy with Teichm\"uller space) $\tau(t)$ is in the thick part of $\tau$.  Define 
\begin{align*}
\thick_\tau(T) = \Big| & \big\{t\in [0,T]: \text{$\tau(t-L,t+L)$ is $C$--close  }\\
&\text{to some $N$--contracting geodesic $\gamma$}\big\} \Big|.
\end{align*}
That is, $\thick_\tau(T)$ is the amount of time $\tau[0,T]$ spends in the thick part. 
We now give a sufficient condition for a geodesic ray to be frequently contracting. 
\end{definition}
\begin{lemma} \label{conditiontobefrequentlycontracting} 
Let $\tau \from [0, \infty) \to X$ be a geodesic ray. Suppose there are constants $N, C>0$ such that for each 
$L>0$  there is a $m>0$ where 
\[
\lim_{T\to \infty}\frac {\thick_\tau(T)}T=m.
\] 
Then $\tau$ is frequently contracting.
\end{lemma}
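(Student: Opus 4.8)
The plan is to deduce frequent contraction directly from the density statement $\lim_{T\to\infty} \thick_\tau(T)/T = m$. Fix $L>0$ and $\theta\in(0,1)$, and let $m>0$ be the corresponding limit density of the thick part (for this particular $L$). The idea is that if a long time window $[t, t+\theta R]$ contained \emph{no} thick point, then $\thick_\tau$ would fail to accumulate mass at the rate $m$ on an initial segment whose length is comparable to $t+\theta R$; making this quantitative gives the required $R_0$.

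First I would translate ``there is an interval $[s-L,s+L]\subset[t,t+\theta R]$ and an $N$--contracting geodesic $\gamma$ with $d(\tau(u),\gamma)\le C$ for all $u\in[s-L,s+L]$'' into the statement ``the open interval $(t+L,\ t+\theta R-L)$ contains a thick point of $\tau$'' — this is exactly the definition of thick point, provided $\theta R > 2L$, which we may assume by taking $R_0 > 2L/\theta$. So it suffices to show: for all sufficiently large $R$ and all $0< t < (1-\theta)R$, the interval $(t+L, t+\theta R - L)$ meets the thick set.

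Next, suppose toward a contradiction that for some such $t$ the interval $J:=(t+L, t+\theta R-L)$ contains no thick point; note $|J| = \theta R - 2L$. Then $\thick_\tau(t+\theta R - L) = \thick_\tau(t+L) \le t+L$. Set $T := t+\theta R - L$. Since $t < (1-\theta)R$ we have $T < R - L < R$, and also $T \ge \theta R - L$, so $T\to\infty$ as $R\to\infty$. We estimate
\[
\frac{\thick_\tau(T)}{T} \le \frac{t+L}{t+\theta R - L} = 1 - \frac{\theta R - 2L}{t + \theta R - L} \le 1 - \frac{\theta R - 2L}{R}.
\]
As $R\to\infty$ the right-hand side tends to $1-\theta < 1$. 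Now choose $R_0$ large enough that (i) $\theta R_0 > 2L$, and (ii) for all $T \ge \theta R_0 - L$ we have $\thick_\tau(T)/T > 1 - \theta/2$ if $m \ge 1 - \theta/2$, or more robustly, since $m>0$ is fixed, pick $R_0$ so that $1 - (\theta R - 2L)/R < m$ forces a contradiction with $\thick_\tau(T)/T \to m$ — concretely, enlarge $R_0$ so that for $R\ge R_0$ the displayed upper bound is $< \tfrac{m}{2}$ wait, that needs $1-\theta < m/2$; instead simply use that $1-(\theta R-2L)/R \to 1-\theta$ and $\thick_\tau(T)/T\to m$, and the bound above would give $m \le 1-\theta$.

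The one genuine subtlety — and the main point to handle carefully — is that this argument as written only yields a contradiction when $m > 1-\theta$, whereas $m$ may be small. To fix this, I would not compare $\thick_\tau$ on $[0,T]$ with $T$, but rather exploit the limit on a \emph{longer} scale: pick an auxiliary large parameter $A$ and consider the window $[t, t+\theta R]$ sitting inside $[0, A R]$ for suitable $A$; more precisely, the correct statement to prove is that \emph{consecutive} gaps in the thick set have sublinear length, which follows because if $[a,b]$ is a maximal gap then $\thick_\tau(b)=\thick_\tau(a)$, so $\thick_\tau(b)/b \le a/b$, and since $\thick_\tau(b)/b\to m>0$ we get $a/b \ge m/2$ eventually, i.e. $b - a \le (1-m/2)\,b$; this is still only sublinear-in-the-wrong-direction. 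The actual clean fix: for any $\epsilon>0$, there is $T_\epsilon$ with $|\thick_\tau(T_2)-\thick_\tau(T_1) - m(T_2-T_1)| \le \epsilon T_2$ for $T_2 \ge T_1 \ge T_\epsilon$, so a gap $[a,b]$ with $b\ge T_\epsilon$ satisfies $0 = \thick_\tau(b)-\thick_\tau(a) \ge m(b-a) - \epsilon b$, giving $b - a \le (\epsilon/m) b$. Choosing $\epsilon = m\theta/2$ shows every gap $[a,b]$ with $b$ large has length at most $(\theta/2) b$. Finally, apply this with $b$ ranging near $t+\theta R$: any gap meeting $(t+L, t+\theta R - L)$ has length $< (\theta/2)(t+\theta R) < (\theta/2)(R) \cdot(1) $, wait we need $t+\theta R < $ something; since $t<(1-\theta)R$, $t + \theta R < R$, so the gap length is $< (\theta/2) R$, while $|J| = \theta R - 2L > (\theta/2)R$ once $R_0 > 4L/\theta$. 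Hence $J$ cannot be contained in a single gap, so $J$ meets the thick set, which is what we wanted. Thus $\tau$ is $(N,C)$--frequently contracting.

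In summary, the skeleton is: (1) reduce frequent contraction to ``every window $(t+L, t+\theta R - L)$ contains a thick point''; (2) from the density limit $\thick_\tau(T)/T\to m>0$, derive that maximal gaps in the thick set have length sublinear — indeed eventually at most $(\theta/2)T$ near scale $T$; (3) compare this with the window length $\theta R - 2L$ to force an intersection. The only delicate part is step (2), getting the gap bound with the right constant rather than the naive $1-\theta$ bound; everything else is bookkeeping with the definitions of thick point and frequently contracting.
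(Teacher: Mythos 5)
Your final argument is correct and is essentially the paper's proof in additive form: the paper derives the same contradiction by observing that a thick-point-free window $[t_n,t_n+\theta R_n]$ forces $\frac{\thick_\tau(t_n)/t_n}{\thick_\tau(t_n+\theta R_n)/(t_n+\theta R_n)}=\frac{t_n+\theta R_n}{t_n}\ge 1+\theta$, contradicting the fact that both densities tend to the same positive limit $m$ --- which is exactly your observation that $\thick_\tau(T_2)-\thick_\tau(T_1)=0$ is incompatible with growth at rate $m$ over an interval of length $\theta R-2L$. The only loose end is the edge case $t+L<T_\epsilon$ (your deviation estimate requires $T_1\ge T_\epsilon$), but there $\thick_\tau(t+L)\le T_\epsilon$ is bounded while $\thick_\tau(t+\theta R-L)\ge (m-\epsilon)(\theta R-L)\to\infty$, so the window still meets the thick set after enlarging $R_0$.
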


\begin{proof} Suppose that $\tau$ is a geodesic ray satisfying the condition of the Lemma.
Then 
\begin{equation} \label{Eq:Double-Limit}
\lim_{s,t \to \infty} \frac{\thick_\tau(t) /t }{\thick_\tau(s) / s } = 1. 
\end{equation} 
Now assume, by way of contradiction, that $\tau$ is not $(N,C)$--frequently contracting. Then there are 
constants $0<\theta<1$ and $L>0$ and sequences $R_n\to \infty$ and $0\leq t_n\leq (1-\theta )R_n$ 
such that $[t_n,t_n+\theta R_n]_{\tau}$ contains no segment of length $2L$ that is $C$--close to some 
$N$--contracting geodesic $\gamma$. That is, 
 \[
 \thick_\tau(t_n) = \thick_\tau(t_n + \theta R_n). 
 \]
 Therefore, 
 \[
 \frac{\thick_\tau(t_n)/t_n }{\thick_\tau(t_n + \theta R_n) / (t_n + \theta R_n)} 
    = \frac{(t_n + \theta R_n)}{t_n} 
    \geq \frac{(t_n + \theta t_n)}{t_n} = (1 + \theta) >1. 
 \]
This contradicts \eqnref{Eq:Double-Limit}. The contradiction proves the desired result of this lemma. 
\end{proof}

Our goal is to show that, if $\tau$ is frequently contracting, then the diameter of the projection of disjoint 
balls to $\tau$ is sublinearly small.  It is in fact sufficient to show that the diameter
of the projection of a disjoint balls is smaller than every linear function. 
 
\begin{proposition} \label{Prop:Small-Linear-Contracting}
Let $\tau \from [0, \infty) \to X$ be an $(N,C)$--frequently contracting geodesic.  
Then for every $\theta>0$ there is $R_0>0$ such that for all $R \geq R_0$ the following holds. 
Assume 
\[
d(x,y) \leq d(x, \tau) 
\qquad\text{and}\qquad
d(\go, x) \leq R
\]
Then 
\[
d(\pi_\tau(x), \pi_\tau(y)) \leq \theta R. 
\]
 \end{proposition}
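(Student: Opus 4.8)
\emph{Proof strategy.} I would argue by contradiction, the idea being to extract from the frequent‑contraction hypothesis a long geodesic subsegment $\sigma_0\subseteq\tau$ that lies \emph{strictly between} $\pi_\tau(x)$ and $\pi_\tau(y)$ and is itself uniformly contracting; the hypothesis $d(x,y)\le d(x,\tau)$ then forces the projections of $x$ and $y$ onto $\sigma_0$ to be a bounded distance apart, while the two endpoints of $\sigma_0$ are (coarsely) where $x$ and $y$ project onto it — so $\sigma_0$ must be short, contradicting the fact that it was chosen long. Throughout I fix $N''=N''(N,C)$, a uniform contracting constant valid for every geodesic segment lying within distance $C$ of an $N$--contracting geodesic (such a constant exists by standard facts about contracting geodesics; see below), and I put $L:=10N''+1$.

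First I would set up the contradiction. Assume $\theta<1$ (the statement is weaker for larger $\theta$), and suppose there are points $p=\tau(a)\in\pi_\tau(x)$ and $q=\tau(b)\in\pi_\tau(y)$ with $d(p,q)>\theta R$; say $a<b$, the case $b<a$ being symmetric. Since $\go=\tau(0)\in\tau$, we have $d(x,\tau)\le\Norm{x}\le R$, whence $a=\Norm{p}\le\Norm{x}+d(x,p)\le 2R$, and then $\Norm{y}\le 2R$, $d(y,\tau)\le 2R$ and $b=\Norm{q}\le 4R$. Now invoke $(N,C)$--frequent contraction with the constants $L$ above and $\theta':=\theta/8\in(0,1)$; let $R_0'$ be the resulting threshold and set $R_0:=R_0'/8$. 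For $R>R_0$, apply the property with $R':=8R>R_0'$ and $t:=a$ (legitimate since $a\le 2R<(1-\theta')R'$): this produces an interval $[s-L,s+L]\subseteq[a,a+\theta R]$ and an $N$--contracting geodesic $\gamma$ with $d(\tau(u),\gamma)\le C$ for all $u\in[s-L,s+L]$. Because $b-a>\theta R$, we get $a\le s-L<s+L<b$, so $\sigma_0:=\tau[s-L,s+L]$ is a geodesic segment lying strictly between $p$ and $q$ along $\tau$.

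Next I would exploit that $\sigma_0$, being a geodesic segment within distance $C$ of the $N$--contracting geodesic $\gamma$, is $N''$--contracting. Since $\sigma_0\subseteq\tau$ we have $d(x,\sigma_0)\ge d(x,\tau)$ and $d(y,\sigma_0)\ge d(y,\tau)$; in particular $d(x,y)\le d(x,\tau)\le d(x,\sigma_0)$. Applying the contracting property of $\sigma_0$ (in its non‑strict form) to the pairs $(x,y)$, $(x,p)$, and $(y,q)$ shows that $\pi_{\sigma_0}(x)$ and $\pi_{\sigma_0}(y)$ lie within $3N''$ of each other and within $3N''$ of $\pi_{\sigma_0}(p)$ and $\pi_{\sigma_0}(q)$ respectively. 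Finally, since $\tau$ is an isometric embedding and $a\le s-L<s+L\le b$, the nearest point of $\sigma_0$ to $p=\tau(a)$ is the endpoint $\tau(s-L)$, and the nearest point to $q=\tau(b)$ is $\tau(s+L)$. Hence, for any $x'\in\pi_{\sigma_0}(x)$ and $y'\in\pi_{\sigma_0}(y)$,
\[
2L=d\bigl(\tau(s-L),\tau(s+L)\bigr)\le d(\tau(s-L),x')+d(x',y')+d(y',\tau(s+L))\le 9N'',
\]
contradicting $2L=20N''+2$. Therefore no such $p,q$ exist, i.e.\ $d(\pi_\tau(x),\pi_\tau(y))\le\theta R$.

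I expect the only genuinely delicate point to be the fact black‑boxed above: that a geodesic segment contained in the $C$--neighbourhood of an $N$--contracting geodesic is itself $N''$--contracting with $N''$ depending only on $N$ and $C$ — in particular \emph{not} on the length $L$, so that choosing $L$ large compared to $N''$ is not circular. This reduces to two standard assertions about contracting geodesics: that a subsegment of one is again uniformly contracting, and that a geodesic at bounded Hausdorff distance from one is again uniformly contracting; one also needs the routine upgrade of Definition~\ref{Def:Contracting} from the strict inequality $d(x,y)<d(x,Z)$ to the non‑strict $d(x,y)\le d(x,Z)$, valid after enlarging the constant by a bounded factor. Everything else is bookkeeping with the constants from the frequent‑contraction hypothesis and the triangle inequality.
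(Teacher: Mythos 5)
Your argument is correct in outline and arrives at the same contradiction as the paper, but via a different final mechanism. Both proofs suppose $d(\pi_\tau(x),\pi_\tau(y))>\theta R$ and use frequent contraction to insert, strictly between the two projection points, a subsegment of $\tau$ of prescribed length lying within $C$ of an $N$--contracting geodesic $\gamma$. From there the paper works with projections to $\gamma$ itself: its ``Claim'' pins $\pi_\gamma(x)$ near $\gamma(s')$ whenever $\pi_\tau(x)$ lies before $\tau(s)$ (this is the same content as your observation that $\pi_{\sigma_0}(x)$ is forced near the endpoint $\tau(s-L)$), concludes that $\pi_\gamma(x)$ and $\pi_\gamma(y)$ are at least $L-2D_2$ apart, and then invokes Lemma~\ref{Lem:Geodesic-Covers} to force the geodesic $[x,y]$ to pass close to $\tau[s,t]$, which is incompatible with $d(x,y)\le d(x,\tau)$ once $L$ is large. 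You instead transfer the contraction property onto the subsegment $\sigma_0=\tau[s-L,s+L]$ and apply the contraction inequality directly to the pairs $(x,y)$, $(x,\pi_\tau(x))$, $(y,\pi_\tau(y))$, exploiting that $\pi_\tau(x)$ and $\pi_\tau(y)$ project exactly to the two endpoints of $\sigma_0$; the final step is then a pure triangle inequality. What your route buys in cleanliness at the end it pays for in the black-boxed inheritance statement --- that a geodesic segment in the $C$--neighbourhood of an $N$--contracting geodesic is $N''(N,C)$--contracting with $N''$ independent of its length. That statement is true and standard (it follows from the stability of the bounded-geodesic-image property of Lemma~\ref{Lem:BGIT} under passing to subsegments and under bounded Hausdorff perturbation, together with the fact that $\pi_{\sigma_0}$ is coarsely the clamped $\pi_\gamma$), but it is not proved in the paper, which deliberately avoids it by only ever projecting to $\gamma$ and quoting Lemmas~\ref{Lem:BGIT} and~\ref{Lem:Geodesic-Covers}; you correctly flag it, and the needed non-strict upgrade of Definition~\ref{Def:Contracting}, as the points requiring proof. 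Your bookkeeping with $\theta'=\theta/8$, $R'=8R$ and $t=a$ checks out, up to the harmless edge case $a=0$.
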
 

We recall several well known facts regarding the properties of contracting geodesics. 

\begin{lemma} \label{Lem:BGIT} \cite[Corollary 3.4]{BF09}
There are constants $C_1, D_1>0$ depending on $N$ such that if $\gamma$ is 
$N$--contracting and the geodesic segment $[x,y]$ is outside of the $C_1$--neighborhood 
of $\gamma$ then the projection of $[x,y]$ to $\gamma$ has diameter at most $D_1$. 
\end{lemma}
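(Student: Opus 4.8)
The plan is to reconstruct the two ingredients behind \cite[Corollary 3.4]{BF09}: a coarse-Lipschitz property of nearest-point projection to an $N$--contracting set, and a geodesy/shortcut estimate that converts large projection into a forbidden short-cut. Throughout I fix $p,q\in[x,y]$ and aim to bound $d(p_\gamma,q_\gamma)$ for arbitrary choices of projection points; since $\diam\pi_\gamma([x,y])$ is exactly the supremum of such distances, a uniform bound here gives the claimed $D_1$.

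First I would record the coarse-Lipschitz bound. If a sub-segment $[p,q]$ stays outside the $r$--neighbourhood $\{z:d(z,\gamma)\le r\}$, subdivide it into consecutive points $p=w_0,\dots,w_k=q$ with $d(w_i,w_{i+1})<r\le d(w_i,\gamma)$; Definition~\ref{Def:Contracting} gives $d((w_i)_\gamma,(w_{i+1})_\gamma)\le N$ at each step, so
\[
d(p_\gamma,q_\gamma)\ \le\ N\Bigl(\tfrac{d(p,q)}{r}+1\Bigr),
\]
and the projection moves along $\gamma$ in jumps of size at most $N$. The second, and main, ingredient is a shortcut estimate valid on any window where the distance to $\gamma$ varies within a bounded ratio. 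Suppose $\min_{[p,q]}d(\cdot,\gamma)=:h\ge C_1$ and $\max_{[p,q]}d(\cdot,\gamma)\le \Lambda h$. Writing $P=d(p_\gamma,q_\gamma)$, the coarse-Lipschitz bound with $r=h$ gives $P\le N(d(p,q)/h+1)$, while the triangle inequality along the path $p\to p_\gamma\to q_\gamma\to q$ together with the fact that $[p,q]$ is a geodesic gives $d(p,q)\le d(p,\gamma)+P+d(q,\gamma)\le 2\Lambda h+P$. Combining the two, and taking $C_1\ge 2N$ so that $N/h\le\tfrac12$, yields $P\le (4\Lambda+2)N$. Thus on every bounded-ratio window the projection is bounded by a constant depending only on $N$ and $\Lambda$.

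It remains to assemble the windows. Letting $h_0=\min_{[x,y]}d(\cdot,\gamma)\ge C_1$ and slicing $[x,y]$ at the first times its distance to $\gamma$ reaches the dyadic heights $2^{j}h_0$, the previous paragraph bounds the projection displacement across each dyadic annulus (a ratio-$2$ window) by a constant multiple of $N$. The remaining difficulty — and the technical heart of \cite[Corollary 3.4]{BF09} — is to show that these per-scale contributions sum to a quantity bounded independently of the number of scales, i.e.\ that the projection displacement decays geometrically as the annuli move away from the closest-approach region. This decay is exactly what fails for the crude coarse-Lipschitz estimate (which only yields a bound logarithmic in $\max(d(x,\gamma),d(y,\gamma))$) and is where the full strength of strong contraction must be used, by applying Definition~\ref{Def:Contracting} with the large tangent balls centred at the far points $w$ of $[x,y]$ and comparing the resulting projections back to the closest-approach foot. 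Carrying this out produces the constants $C_1=C_1(N)$ and $D_1=D_1(N)$; the main obstacle throughout is that $s\mapsto d([x,y](s),\gamma)$ is only $1$--Lipschitz and need not be monotone, so the scale decomposition and the decay estimate must be organised to be uniform in the (arbitrary) length and distance-to-$\gamma$ of the segment.
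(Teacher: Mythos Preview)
The paper does not give a proof of this lemma; it simply quotes it from \cite{BF09}. So there is nothing to compare against on that front, and your attempt should be judged on its own.

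Your Steps 1 and 2 are correct and are the standard opening moves. The problem is Step 3, which you yourself flag as ``the technical heart'' and then do not actually carry out. Two concrete issues:

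\emph{The slicing does not give ratio-$2$ windows.} If $z_j$ is the first time the distance to $\gamma$ reaches $2^{j}h_0$, then on $[z_j,z_{j+1}]$ the distance is bounded above by $2^{j+1}h_0$ (by the first-time condition) but is only bounded below by the global minimum $h_0$, since the $1$--Lipschitz function $t\mapsto d([x,y](t),\gamma)$ can dip back down. So the ratio on the $j$-th piece can be $2^{j+1}$, not $2$, and your Step~2 bound on that piece is $(4\cdot 2^{j+1}+2)N$, which diverges when summed over $j$.

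\emph{The ``geometric decay'' is asserted, not proved.} You write that applying Definition~\ref{Def:Contracting} ``with the large tangent balls centred at the far points'' yields the decay, and that ``carrying this out produces the constants'', but you never carry it out. The actual mechanism in \cite{BF09} is different: one first establishes the inequality $d(x,z)\ge d(x,x_\gamma)+d(x_\gamma,z)-O(N)$ for any $z\in\gamma$ (equivalently, nearest-point projection is a coarse retraction, so $x\to x_\gamma\to z$ is a $(1,O(N))$--quasi-geodesic). This single estimate, applied at both endpoints, gives $d(x,y)\ge d(x,\gamma)+d(x_\gamma,y_\gamma)+d(y,\gamma)-O(N)$ whenever the projection is large, which immediately forces $[x,y]$ to enter an $O(N)$--neighbourhood of $\gamma$ --- no dyadic summation is needed. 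Your approach tries to bypass this retraction estimate and replace it by a multiscale argument, but as written it does not close.
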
 

\begin{lemma} \label{Lem:Geodesic-Covers} \cite[Corollary 3.4]{BF09}
There is a constant $C_2>0$ depending only on $N$ such that, for a $N$--contracting geodesic
$\gamma$ and for $x, y \in X$, if $d(\pi_\gamma(x), \pi_\gamma(y)) \geq D_2$, then
the $C_2$--neighborhood of the geodesic segment $[x,y]$ contains the segment 
$[\pi_\gamma(x), \pi_\gamma(y)]_\gamma$. 
\end{lemma}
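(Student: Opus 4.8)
The plan is to prove the reverse inclusion by tracking the nearest-point projection of points of $[x,y]$ onto $\gamma$ as one travels along the segment, showing that this projection sweeps from $p := \pi_\gamma(x)$ to $q := \pi_\gamma(y)$ in uniformly bounded steps, and then upgrading the statement ``the projection passes near $z$'' into ``the segment $[x,y]$ itself passes near $z$.'' So fix $z \in [\pi_\gamma(x), \pi_\gamma(y)]_\gamma$; we must produce a point of $[x,y]$ within a uniform distance $C_2 = C_2(N)$ of $z$. Orient $\gamma$ by an arc-length coordinate $g$ with $g(p) \le g(q)$, and parametrize $[x,y]$ by arc length, writing $w_t$ for the point at parameter $t \in [0, d(x,y)]$, so that $w_0 = x$ and $w_{d(x,y)} = y$.

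First I would record that the projection is coarsely Lipschitz along $[x,y]$: there is a constant $K = K(N)$ such that $d(w,w') \le 1$ implies $d(\pi_\gamma(w), \pi_\gamma(w')) \le K$. This follows directly from the contracting property (Definition~\ref{Def:Contracting}): if $d(w, \gamma) > 1$ then $d(w,w') < d(w,\gamma)$ gives $d(\pi_\gamma(w), \pi_\gamma(w')) \le N$, while if $d(w,\gamma) \le 1$ then both $w$ and $w'$ lie within distance $2$ of $\gamma$, so their projections satisfy $d(\pi_\gamma(w), \pi_\gamma(w')) \le d(\pi_\gamma(w),w)+d(w,w')+d(w',\pi_\gamma(w')) \le 4$; thus $K = \max(N,4)$ works. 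Subdividing $[0,d(x,y)]$ at unit-spaced parameters $t_0 < t_1 < \cdots < t_m$ and applying a discrete intermediate value argument to the sequence $g(\pi_\gamma(w_{t_i}))$, which runs from $g(p)$ to $g(q)$ in steps of size at most $K$, produces an index $i$ with $d(\pi_\gamma(w_{t_i}), z) < K$ (both points lie on the geodesic $\gamma$, so this distance equals $|g(\pi_\gamma(w_{t_i})) - g(z)|$). Write $w = w_{t_i}$ for this ``crossing'' point.

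The main obstacle is that $w$ need not be close to $\gamma$, so $d(\pi_\gamma(w), z) < K$ does not yet bound $d(w, z)$; this is resolved by decomposing $[x,y]$ relative to the $C_1$--neighbourhood $\mathcal{N}$ of $\gamma$, where $C_1$ is the constant of Lemma~\ref{Lem:BGIT}. Setting $D_2 := D_1 + 1$, the hypothesis $d(p,q) \ge D_2$ forces $[x,y]$ to meet $\mathcal{N}$: otherwise Lemma~\ref{Lem:BGIT} would give $\diam\,\pi_\gamma([x,y]) \le D_1 < D_2 \le d(p,q)$, contradicting $p,q \in \pi_\gamma([x,y])$. Now if the crossing point $w$ lies in $\mathcal{N}$, then $d(w,z) \le d(w, \pi_\gamma(w)) + d(\pi_\gamma(w), z) \le C_1 + K$ and we are done. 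Otherwise $w$ lies in a maximal subsegment $[a,b] \subset [x,y]$ disjoint from $\mathcal{N}$; since $[x,y]$ meets $\mathcal{N}$ this subsegment is proper, so one of its endpoints, say $b$, is an interior point of $[x,y]$ lying on $\partial \mathcal{N}$, whence $d(b,\gamma) = C_1$. By Lemma~\ref{Lem:BGIT} the projection $\pi_\gamma([a,b])$ has diameter at most $D_1$, so $d(\pi_\gamma(b), z) \le d(\pi_\gamma(b), \pi_\gamma(w)) + d(\pi_\gamma(w),z) \le D_1 + K$, and therefore $d(b,z) \le d(b, \pi_\gamma(b)) + d(\pi_\gamma(b), z) \le C_1 + D_1 + K$. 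As $b \in [x,y]$, this exhibits $z$ within $C_1 + D_1 + K$ of the segment. Taking $C_2 := C_1 + D_1 + K$, which depends only on $N$, completes the proof; I expect the bookkeeping of the boundary case just described (choosing the endpoint of $[a,b]$ that actually touches $\partial\mathcal{N}$) to be the only delicate point, the remaining steps being routine applications of the contracting inequality and Lemma~\ref{Lem:BGIT}.
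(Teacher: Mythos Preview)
Your argument is correct: the coarse-Lipschitz bound on the projection combined with the discrete intermediate value trick locates a crossing point, and the decomposition against the $C_1$--neighbourhood from Lemma~\ref{Lem:BGIT} upgrades ``projection near $z$'' to ``$[x,y]$ near $z$.'' The only cosmetic issues are that $\pi_\gamma$ is set-valued in this paper (so your constants may shift by an additive $N$) and the open/closed convention for $\mathcal N$ at the endpoint $b$, both of which are harmless.

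As for comparison with the paper: there is nothing to compare. The paper does not prove this lemma at all; it simply quotes it as \cite[Corollary 3.4]{BF09} and uses it as a black box. Your write-up is essentially the standard proof one would give for this fact (and is close in spirit to the Bestvina--Fujiwara argument), so it is a genuine addition rather than a reworking of something already in the text.
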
 

\begin{proof}[Proof of \propref{Prop:Small-Linear-Contracting}]
Assume $\tau[s,t]$ is $C$ close to some $N$--contracting geodesic $\gamma$
with 
\[
d(\tau(s), \gamma(s')) \leq C 
\qquad\text{and}\qquad
d(\tau(t), \gamma(t')) \leq C 
\]
for some times $s < t$ and $s' < t'$ where $L = (t-s)$ is large. 

\subsection*{Claim} There is a $D_2$ (depending only on $N$ and specified in Lemma~\ref{Lem:Geodesic-Covers} )such that, for any $x \in X$, 
if $\pi_\tau(x) = \tau(u)$ for $u \leq s$ then $\pi_\gamma(x) = \gamma(u')$ for 
$u' \leq s' + D_2$.

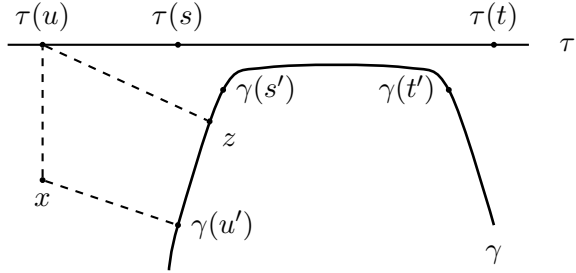
\begin{figure}[h!]
\begin{tikzpicture}[scale=0.6]
 \tikzstyle{vertex} =[circle,draw,fill=black,thick, inner sep=0pt,minimum size=.5 mm]
[thick, 
    scale=1,
    vertex/.style={circle,draw,fill=black,thick,
                   inner sep=0pt,minimum size= .5 mm},
                  
      trans/.style={thick,->, shorten >=6pt,shorten <=6pt,>=stealth},
   ]

  \node(o) at (0,0) {}; 
   \node[vertex](au) at (1,0) [label=above:$\tau(u)$] {}; 
     \node[vertex](as) at (4,0) [label=above:$\tau(s)$] {}; 
      \node[vertex](at) at (11,0) [label=above:$\tau(t)$] {}; 
    \node(a) at (12, 0)[label=right:$\tau$] {}; 
    
  \node[vertex](ys) at (5,-1) [label=right:$\gamma(s')$] {}; 
    \node[vertex](z) at (4.7,-1.7) [label=below right:$z$] {}; 
   \node[vertex](yt) at (10,-1) [label=left: $\gamma(t')$] {}; 
    \node[vertex](yu) at (4,-4) [label=right: $\gamma(u')$] {}; 
   
   \node[vertex](x) at (1,-3)[label=below:$x$] {};

     \draw [thick](o)--(a){};
  \draw  [thick,dashed](x)--(au){};
   \draw  [thick,dashed](x)--(yu){};
      \draw  [thick,dashed](au)--(z){};

          \node at (11,-4)[label=below:$\gamma$] {};

   \pgfsetlinewidth{1pt}
  \pgfsetplottension{.55}
  \pgfplothandlercurveto
  \pgfplotstreamstart
    \pgfplotstreampoint{\pgfpoint{3.8cm}{-5cm}}
  \pgfplotstreampoint{\pgfpoint{4cm}{-4cm}}
  \pgfplotstreampoint{\pgfpoint{5cm}{-1cm}}
  \pgfplotstreampoint{\pgfpoint{6cm}{-0.5cm}}
  \pgfplotstreampoint{\pgfpoint{9cm}{-0.5cm}}
  \pgfplotstreampoint{\pgfpoint{10cm}{-1cm}}
  \pgfplotstreampoint{\pgfpoint{11cm}{-4cm}}
  \pgfplotstreamend
  \pgfusepath{stroke} 

 \end{tikzpicture}
\caption{The point $z = \pi_\gamma (\pi_\tau (x))$ is near $\gamma(s')$. }
\label{Fig:z}
\end{figure}

This is because by \lemref{Lem:BGIT}, $z=\pi_\gamma (\pi_\tau(x))$
is near $\gamma(s')$ (see \figref{Fig:z}). If $\pi_\gamma(x) = \gamma(u')$ where $(u'-s')$ is larger than $D_2$, 
then by \lemref{Lem:Geodesic-Covers}, we have the a $C_2$--neighborhood of the geodesic
$[x, \pi_\tau(x)]$ contains the sub-segment $\gamma[s', u']$. 
Choose $w'$ such that $w'-s'$ is large and, 
\[ d(\gamma(w'), \gamma(w)) \leq C\]
for some $w$ where $(w-s)$ is large. Therefore, $\gamma(w')$ and hence $\tau(w)$ 
are much closer to $x$ than $\pi_\tau(x)$ which is a contradiction and thus the claim holds.

Now, if $d(\pi_\tau(x), \pi_\tau(y)) \geq \theta R$ for sufficiently large $R$, 
the segment 
$[\pi_\tau(x), \pi_\tau(y)]_\tau$ contains a subsegment $\tau[s, t]$
with $(t-s)\geq L$ that is $C$ close to some $\gamma$ (see \figref{Fig:L}). Then 
the projection of $x, y$ to $\gamma$ are $L-2D_2$ apart. Which means the 
$(D_2+C)$--neighborhood of the geodesic segment  $[x,y]$ covers the segment 
$\tau[s,t]$. Hence $d(x,y) > d(x, \tau)$, contradicting the assumption. This finishes the proof of \propref{Prop:Small-Linear-Contracting}. 
\end{proof}


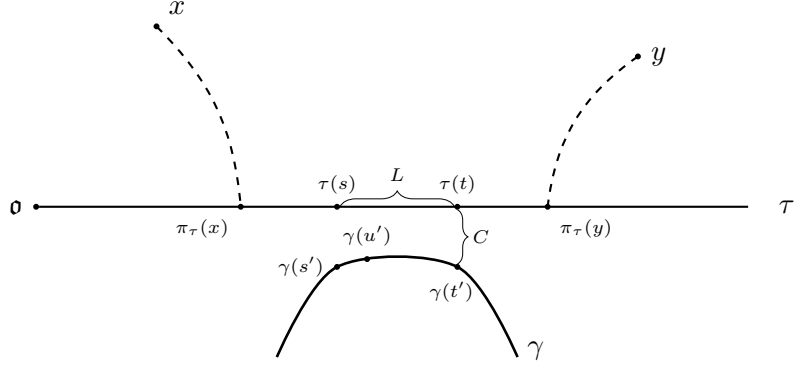
\begin{figure}[h!]
\begin{tikzpicture}[scale=0.8]
 \tikzstyle{vertex} =[circle,draw,fill=black,thick, inner sep=0pt,minimum size=.5 mm]
[thick, 
    scale=1,
    vertex/.style={circle,draw,fill=black,thick,
                   inner sep=0pt,minimum size= .5 mm},
                  
      trans/.style={thick,->, shorten >=6pt,shorten <=6pt,>=stealth},
   ]

  \node[vertex] (o) at (1,0)[label=left:$\go$] {}; 
    \node(a) at (13, 0)[label=right:$\tau$] {}; 
  \node[vertex](x) at (3,3) [label=above right:$x$] {}; 
   \node[vertex](y) at (11,2.5)[label=right:$y$] {}; 
  \node[vertex](x1) at (4.4, 0)[label=below left:\tiny$\pi_\tau(x)$] {}; 
    \node[vertex](s1) at (6, 0)[label=above:\tiny$\tau(s)$] {}; 
      \node[vertex](t1) at (8, 0)[label=above:\tiny$\tau(t)$] {}; 
       \node[vertex](s2) at (6, -1)[label=left:\tiny$\gamma(s')$] {}; 
       \node[vertex](s3) at (6.5, -0.87)[label=above:\tiny$\gamma(u')$] {}; 
      \node[vertex](t2) at (8, -1) {}; 
      
      \node at (7.9, -0.9)[label=below: \tiny $\gamma(t')$] {}; 
    \node[vertex](y1) at (9.5,0)[label=below right:\tiny$\pi_\tau(y)$] {}; 
    
	
	    \draw [very thin,
	decorate,
	decoration={brace,mirror,amplitude=6pt}] 
	(t1) -- (s1) node [midway,yshift=12pt] {\tiny $L$};
	
		    \draw [very thin,
	decorate,
	decoration={brace,mirror,amplitude=4pt}] 
	(t2) -- (t1) node [midway,xshift=9pt] {\tiny $C$};

          \node at (9.3,-1.9)[label=below:$\gamma$] {}; 
  
    \draw [thick](o)--(a){};
   \draw [thick, dashed]  (3,3)to [bend left=20] (4.4, 0);
     \draw [thick, dashed]  (11,2.5)to [bend right=24] (9.5, 0);
   
   \pgfsetlinewidth{1pt}
  \pgfsetplottension{.55}
  \pgfplothandlercurveto
  \pgfplotstreamstart
  \pgfplotstreampoint{\pgfpoint{5cm}{-2.5cm}}
  \pgfplotstreampoint{\pgfpoint{6cm}{-1cm}}
  \pgfplotstreampoint{\pgfpoint{8cm}{-1cm}}
  \pgfplotstreampoint{\pgfpoint{9cm}{-2.5cm}}
  \pgfplotstreamend
  \pgfusepath{stroke} 

 \end{tikzpicture}
\caption{If $d(\pi_\tau (x), \pi_\tau (y)) \geq \theta R$
the segment $[\pi_\tau(x), \pi_\tau(y)]_\tau$ contains a subsegment of
length $L$ that is $C$-close to some $\gamma$. }
\label{Fig:L}
\end{figure}


\begin{theorem} \label{main-frequentlycontractingimpliessublinearlymorse}
If $\tau$ is frequently contracting, then it is $\kappa$-contracting for some sublinear function
$\kappa$. Hence it is also $\kappa$-Morse. 
\end{theorem}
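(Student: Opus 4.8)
The plan is to promote Proposition~\ref{Prop:Small-Linear-Contracting}, which bounds the diameter of the projection of two disjoint balls to $\tau$ by \emph{any} prescribed linear function of $\Norm x$ (once we are far enough out), into a genuine sublinear contraction bound, and then invoke Theorem~\cite{QRT2} to conclude $\kappa$--Morseness. The key observation is that Proposition~\ref{Prop:Small-Linear-Contracting} does not just hold for one value of $\theta$: for every $\theta>0$ it produces a threshold $R_0 = R_0(\theta)$ so that whenever $\Norm x \geq R_0(\theta)$ and $d(x,y)\le d(x,\tau)$, the projections satisfy $d(\pi_\tau(x),\pi_\tau(y))\le \theta\Norm x$. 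So I would fix a decreasing sequence $\theta_k \to 0$ (say $\theta_k = 2^{-k}$), let $R_k = R_0(\theta_k)$ (which we may assume is increasing, replacing $R_k$ by $\max(R_k,R_{k-1}+1)$ if needed), and then define a function $\kappa_0$ on $[0,\infty)$ by setting $\kappa_0(r) = \theta_k \cdot r$ for $r \in [R_k, R_{k+1})$, with $\kappa_0(r)$ defined by some fixed constant on $[0,R_0)$. By construction, for every $x$ with $\Norm x = r \geq R_0$, choosing the index $k$ with $r\in[R_k,R_{k+1})$ gives $d(\pi_\tau(x),\pi_\tau(y)) \le \theta_k r = \kappa_0(\Norm x)$ whenever $d(x,y)\le d(x,\tau)$; and for $\Norm x < R_0$ the projection diameter is bounded by a universal constant (the balls are bounded, or one uses that $X$ is proper and $\tau$ starts at $\go$), which we can absorb into the $r<R_0$ part of $\kappa_0$.

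The function $\kappa_0$ built this way is sublinear, since $\kappa_0(r)/r = \theta_k \to 0$ as $r\to\infty$, but it need not be monotone increasing or concave. To fix this I would apply the Remark following the definition of sublinear functions: replace $\kappa_0$ by its least concave majorant $\overline{\kappa_0}$ (the construction $\overline\kappa(t)=\sup\{\lambda\kappa(u)+(1-\lambda)\kappa(v)\}$ given there), which is monotone increasing, concave, still sublinear, and satisfies $\kappa_0 \le \overline{\kappa_0}$. Wait — one must check sublinearity of the concave majorant survives, but that is exactly the content of that Remark, so I may cite it. Also one should add $1$ (or take $\max$ with $1$) so that the resulting $\kappa$ takes values in $[1,\infty)$, as required by the running conventions; this only enlarges the bound. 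Call the final function $\kappa$. Then for all $x,y\in X$ with $d(x,y)\le d(x,\tau)$ we have
\[
\diam_X\big(\pi_\tau(x)\cup \pi_\tau(y)\big) \;\le\; \kappa_0(\Norm x) \;\le\; \kappa(\Norm x),
\]
which is precisely the defining inequality in Definition~\ref{Def:kappa-Contracting} with contracting constant $\cc_\tau = 1$ (the $\diam$ here is measured along $\tau$, but since $\pi_\tau(x),\pi_\tau(y)$ are points on the geodesic $\tau$, distance in $X$ between them is at most their parameter difference, so the same bound holds for $\diam_X$). Hence $\tau$ is $\kappa$--contracting.

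Finally, by the cited theorem of \cite{QRT2}, any $\kappa$--contracting quasi-geodesic ray in a proper geodesic space is both weakly and strongly $\kappa$--Morse; in particular $\tau$ is $\kappa$--Morse, and since $\kappa$ is sublinear, $\tau$ is sublinearly Morse. \textbf{Main obstacle.} The only genuinely delicate point is bookkeeping at small scales and the passage to the concave majorant: one must make sure (i) that the universal bound on projections of balls with $\Norm x < R_0$ is legitimate — here one uses that if $d(x,y)\le d(x,\tau)$ and $\Norm x$ is bounded, then $x$ (hence $y$) lies in a bounded region, so $\pi_\tau(x),\pi_\tau(y)$ lie in a bounded initial segment of $\tau$ — and (ii) that replacing $\kappa_0$ by $\overline{\kappa_0}$ does not destroy sublinearity, which is precisely why the Remark's majorant construction (rather than, say, a naive piecewise-linear interpolation) is the right tool. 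Everything else is a direct unwinding of Proposition~\ref{Prop:Small-Linear-Contracting} quantified over $\theta$.
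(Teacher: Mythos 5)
Your proof is correct and rests on exactly the same key ingredient as the paper's, namely Proposition~\ref{Prop:Small-Linear-Contracting} quantified over all $\theta>0$. The paper phrases the final step as a proof by contradiction (a sequence $x^n,y^n$ with $d(x^n,y^n)\le d(x^n,\tau)$ and projection diameter at least $2\theta\Norm{x^n}$ would violate the proposition for $R=\Norm{x^n}$ large), whereas you build the sublinear function explicitly from the thresholds $R_0(\theta_k)$ and then concavify via the Remark; the two are logically equivalent, and your version simply makes explicit the construction that the paper's ``that is'' step leaves implicit.
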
 
 
 \begin{proof}
Assume for contradiction that $\tau$ is not $\kappa$--contracting for any sublinear function
$\kappa$. That is, there is a sequence of point $x^n, y^n \in X$ with $\Norm{x^n} \to \infty$, such that 
\[d_X(x^n, y^n) \leq d_X( x^n, \tau).\]
 However, we have
\[
\limsup_{n \to \infty} \frac{\diam_X \big( x^n_\tau \cup y^n_\tau \big)}{\Norm{x^n}} \geq 3 \theta >0. 
\]
Taking a subsequence, we can in fact assume that, for every $n$,  
\begin{equation} \label{Eq:2theta}
\frac{\diam_X \big( x^n_\tau \cup y^n_\tau \big)}{\Norm{x^n}} \geq 2 \theta.
\end{equation}
Let $R_0$ be the constant associated to $\theta$ given by Proposition~\ref{Prop:Small-Linear-Contracting} 
and let $n$ be such that $\Norm{x_n} \geq R_0$. Then for $R= \Norm{x_n}$,  
 Proposition~\ref{Prop:Small-Linear-Contracting}  implies that 
 \[
 \frac{\diam_X \big( x^n_\tau \cup y^n_\tau \big)}{\Norm{x^n}} \leq \theta,
 \]
 which contradicts \eqref{Eq:2theta}. The contradiction proves the corollary as desired. 
\end{proof}

\section{\CAT spaces, their boundaries and their isometries }\label{sectionCAT}
A proper geodesic metric space is \CAT if it satisfies a certain metric analogue of nonpositive 
curvature (see, for example, \cite{BH1}). Roughly speaking, a space is \CAT if geodesic triangles 
in $X$ are at least as thin as triangles in Euclidean space with the same side lengths. To be precise, for any 
given geodesic triangle $\triangle pqr$, consider the unique triangle 
$\triangle \overline p \overline q \overline r$ in the Euclidean plane with the same side 
lengths. For any pair of points $x, y$ on edges $[p,q]$ and $[p, r]$ of the 
triangle $\triangle pqr$, if we choose points $\overline x$ and $\overline y$  on 
edges $[\overline p, \overline q]$ and $[\overline p, \overline r]$ of 
the triangle $\triangle \overline p \overline q \overline r$ so that 
$d_X(p,x) = d_\EE(\overline p, \overline x)$ and 
$d_X(p,y) = d_\EE(\overline p, \overline y)$ then,
\[ 
d_{X} (x, y) \leq d_{\EE^{2}}(\overline x, \overline y).
\]

Here, we list some properties of proper \CAT spaces that are needed later. 

\begin{lemma}[\cite{BH1})] \label{Lem:CAT} 
 A proper \CAT space $X$ has the following properties:
\begin{enumerate}[i.]
\item It is uniquely geodesic, that is, for any two points $x, y$ in $X$, 
there exists exactly one geodesic connecting them. Furthermore, $X$ is contractible 
via geodesic retraction to a base point in the space. 
\item The nearest-point projection from a point $x$ to a geodesic line $b$ 
is a unique point denoted $x_b$. In fact, the closest-point projection map
\[
\pi_b \from X \to b
\]
is Lipschitz. 
\item Let $\beta \from [0 ,1] \to X$ is a continuous path and let 
$\gamma \from [0, 1] \to X$ be a geodesic segment such that $\beta(i) = \gamma(i),  i = 0, 1$. 
Then for every $0 \leq s \leq 1$, there exists $t$ such that $\pi_{\gamma}(\beta(t)) = \gamma(s)$.
\end{enumerate}
\end{lemma}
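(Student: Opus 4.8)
The plan is to derive all three items from the basic comparison inequality defining \CAT spaces; each is a standard fact from \cite{BH1}, and I indicate the arguments rather than carrying out every comparison in detail.

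For item (i), uniqueness of geodesics follows by applying the \CAT inequality to a degenerate triangle: given two constant-speed geodesics $\gamma_1, \gamma_2 \from [0,1] \to X$ from $x$ to $y$, consider the geodesic triangle with vertices $x, y, y$, two of whose sides are $\gamma_1$ and $\gamma_2$ and whose third side is the constant path at $y$. Its comparison triangle in $\EE^2$ collapses onto the segment $[\overline x, \overline y]$, so $\gamma_1(t)$ and $\gamma_2(t)$ share a common comparison point; the \CAT inequality then gives $d_X(\gamma_1(t), \gamma_2(t)) = 0$ for all $t$, hence $\gamma_1 = \gamma_2$. For contractibility, convexity of the metric (itself a consequence of the \CAT inequality) makes the geodesic retraction $H \from X \times [0,1] \to X$, sending $(x,s)$ to the point on $[\go, x]$ at parameter $s$, jointly continuous; since $H(\param, 1) = \id_X$ and $H(\param, 0) \equiv \go$, this is a deformation retraction of $X$ onto $\go$.

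For item (ii), existence of a nearest point on the (convex) geodesic line $b$ is immediate from properness, as already noted. Uniqueness uses convexity once more: if $p \neq p'$ both realize $d_X(x, b)$, then the midpoint $m$ of $[p, p']$ lies on $b$ and the \CAT comparison in the triangle $\triangle(x, p, p')$ gives $d_X(x, m) < d_X(x, b)$, a contradiction. That $\pi_b$ is Lipschitz — in fact $1$--Lipschitz — is the nonexpansiveness of nearest-point projection onto a complete convex subset: for $x, y$ with $x_b \neq y_b$ one verifies that the Alexandrov angles $\angle_{x_b}(x, y_b)$ and $\angle_{y_b}(y, x_b)$ are each at least $\pi/2$, and feeding these into the \CAT inequality for the (at most two) triangles spanning the quadrilateral $x\, x_b\, y_b\, y$ forces $d_X(x_b, y_b) \leq d_X(x, y)$.

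Item (iii) is then a formal consequence of item (ii). Parametrizing $\gamma$ by $[0,1]$, the map $f = \pi_\gamma \circ \beta \from [0,1] \to [0,1]$ is continuous, being the composite of the continuous path $\beta$ with the Lipschitz map $\pi_\gamma$. Since $\beta(0) = \gamma(0)$ and $\beta(1) = \gamma(1)$ we have $f(0) = 0$ and $f(1) = 1$, so by the intermediate value theorem every $s \in [0,1]$ is attained, i.e.\ there is $t$ with $\pi_\gamma(\beta(t)) = \gamma(s)$. The only genuinely technical step in the whole argument is the $1$--Lipschitz estimate for $\pi_b$ in item (ii); items (i) and (iii) are immediate from the \CAT inequality and the intermediate value theorem respectively.
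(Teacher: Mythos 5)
The paper gives no proof of this lemma, deferring entirely to the citation of Bridson--H\"afliger, and your sketches correctly reproduce the standard arguments from that reference: the degenerate comparison triangle for uniqueness of geodesics and convexity of the metric for contractibility, the midpoint/comparison argument plus the angle estimate for uniqueness and $1$--Lipschitzness of $\pi_b$, and the intermediate value theorem for item (iii). Everything checks out; the only implicit step worth noting is that in (iii) you identify the image of $\gamma$ with $[0,1]$ via the (homeomorphic) constant-speed parametrization before invoking the intermediate value theorem, which is harmless.
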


\subsection{The visual boundary of \CAT spaces.}
 \begin{definition}[visual boundary]
Let $X$ be a \CAT space. The \emph{visual boundary} of $X$, denoted $\partial_{\rm vis} X$ is the set of equivalence classes of unit speed geodesic rays (from any point in $X$) where two rays $\tau_1,\tau_2$ are equivalent if $\sup_{t>0}d(\tau_1(t),\tau_2 (t))<\infty$. 
\end{definition}

We describe the topology of the visual boundary by a neighbourhood basis:  
A neighborhood basis for a ray $\tau$ is given by sets of the form: 
\[U_{\rm vis}\big(\tau, r, \epsilon):=\{ \beta \in \partial_{\rm vis} X \, | \, d(\tau(t), \beta(t))<\epsilon \, \, \text{for all }\, t < r \}.\]

In other words, two geodesic rays are close if they have
geodesic representatives that start at the same point and stay close (are at most $\epsilon$ apart) for a long
time (at least $r$). By Proposition I. 8.2 in \cite{BH1}, for each ray $\tau$ and each $\go' \in X$, there is a unique geodesic ray $\tau'$ starting at $\go'$ with such $\tau$ and $\tau'$ are
a bounded distance from each other, that is, the visual boundary is independent of the base-point.

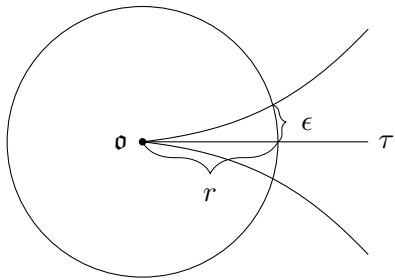
\begin{figure}[h]
\begin{center}
\begin{tikzpicture}[scale=0.6]

\node (x) [circle,fill,inner sep=1pt,label=180:$\go$] at (0,0) {};
\draw [name path=circle] (0,0) circle (3);

\draw [name path=line,thin] (0,0) to [bend right=20] (5,2.5);
\draw [thin] (0,0) to [bend left=20] (5,-2.5);
\draw (0,0) to (5,0) node [right] {$\tau$};

\draw [very thin,
	decorate,
	decoration={brace,mirror,amplitude=12pt}] 
	(0,0) -- (3,0) node [midway,yshift=-20pt] {$r$};

\draw [very thin,
	name intersections={of=line and circle},
	decorate,
	decoration={brace,amplitude=4pt}] 
	(intersection-1) -- (3,0) node [midway,xshift=12pt] {$\epsilon$};

\end{tikzpicture}
\end{center}
\caption{A neighbourhood basis for the visual topology.}
\label{}
\end{figure}

For bi-infinite geodesic $\tau \from (-\infty, \infty) \to X$ let $\tau_+$ be the geodesic ray
that is bounded distance from $\tau[0, \infty)$ and let $\tau_-$ be the geodesic ray
that is bounded distance from $\tau(\infty, 0]$.  We can think of $\tau_{+},\tau_{-}$ as points in 
$\partial_{\rm vis} X$ and we say $\tau$ \emph{connects} $\tau_-$ to $\tau_+$. We say $\tau_\pm$ are
limit points of $\tau$ in the visual boundary. Then $\tau$ is the unique geodesic with these limit 
points in $\partial_{\rm vis} X$. 

A flat strip of a space $X$ is a subset $F\subset X$ isometric to $\mathbb{R} \times I$ for some 
interval $I$ with the Euclidean metric.  It is a flat plane if $I=(-\infty,\infty)$ and a flat half 
plane if $I=[0,\infty)$. Call an infinite geodesic line in $X$ \emph{rank-1} if its image does 
not bound a flat half-plane in $X$ and \emph{zero width} if it does not bound any flat strip. 

Note, unless $X$ is Gromov hyperbolic, not any two points 
of $\partial_{\rm vis} X$ can be joined by a bi-infinite geodesic.
However, we have the following \cite[Lemma III.3.1]{Ballman}.

\begin{lemma}\label{strip} Let $X$ be a $\CAT$ space. Let $\tau: \mathbb{R} → X$ be a biinfinite geodesic which does not bound a flat strip of width $R > 0$. Then there are neighborhoods $U$ and $V$ in $\partial_{\rm vis} X$ of the endpoints of $\tau$ such that for any
$\alpha \in U$ and $\beta \in  V$ , there is a geodesic joining $\alpha$ to $\beta$. For any such geodesic $\tau'$, we have $d(\tau', \tau(0))<R$;
in particular, $\tau'$ does not bound a flat strip of width $2R$.
\end{lemma}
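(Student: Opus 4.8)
The plan is to run a compactness argument whose engine is the flat strip theorem: if two geodesic lines in a \CAT space are asymptotic at both ends, their convex hull is isometrically embedded and isometric to a flat strip $\mathbb{R}\times[0,w]$, with $w$ the distance between the two lines and each line a boundary component. Two preliminaries are worth isolating first. The endpoints $\xi:=\tau_+$ and $\zeta:=\tau_-$ of $\tau$ are distinct in $\partial_{\rm vis}X$, since asymptotic rays stay a bounded distance apart whereas $d(\tau(t),\tau(-t))=2t\to\infty$; as $\partial_{\rm vis}X$ is Hausdorff we fix disjoint neighbourhoods $U\ni\xi$, $V\ni\zeta$, so that $\alpha\in U$ and $\beta\in V$ are automatically distinct. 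Then the geodesic rays $r_\alpha,r_\beta$ from $\go=\tau(0)$ representing them are distinct, and since $t\mapsto d(r_\alpha(t),r_\beta(t))$ is convex, vanishes at $0$, and is not identically zero, we get $d(r_\alpha(m),r_\beta(m))\to\infty$. Second, properness of $X$ gives an Arzel\`a--Ascoli principle: a sequence of geodesic segments passing through a fixed bounded region, of lengths tending to infinity in both directions from that region, subconverges uniformly on compacta to a bi-infinite geodesic.

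\emph{Construction of the connecting geodesic.} Given $\alpha\in U$, $\beta\in V$, put $\sigma_m:=[\,r_\beta(m),r_\alpha(m)\,]$. If $U,V$ are small then $r_\alpha,r_\beta$ fellow-travel $\tau|_{[0,\infty)}$ and $t\mapsto\tau(-t)$ up to a large time $T$, so by convexity of the metric $\sigma_T$ lies in a small neighbourhood of $\tau|_{[-T,T]}$, and in particular its midpoint is close to $\tau(0)$. The crucial input $(\star)$, discussed below, is that for $U,V$ small enough $d\big(\tau(0),\sigma_m\big)$ stays bounded for \emph{all} $m\ge T$. Granting $(\star)$: re-parametrize $\sigma_m$ so that the point $p_m$ nearest $\tau(0)$ sits at parameter $0$; since $d(\go,r_\alpha(m))=d(\go,r_\beta(m))=m\to\infty$ the two halves of $\sigma_m$ have length tending to $\infty$, so Arzel\`a--Ascoli yields a subsequential limit $c=c(\alpha,\beta)$, a bi-infinite geodesic with $d(c,\tau(0))$ bounded. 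Because $p_m$ stays uniformly close to $\go$, convexity of the metric forces the truncated segments $[\,p_m,r_\alpha(m)\,]$ to stay uniformly close to $r_\alpha|_{[0,m]}$, so the forward half-ray of $c$ is asymptotic to $r_\alpha$; likewise its backward half-ray is asymptotic to $r_\beta$. Hence $c$ joins $\alpha$ to $\beta$, which is the required existence statement.

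\emph{The distance estimate and the final clause.} Suppose no choice of $U,V$ makes ``$d(\tau',\tau(0))<R$ for every geodesic $\tau'$ joining $U$ to $V$'' hold. Then there are $\alpha_n\to\xi$, $\beta_n\to\zeta$ and connecting geodesics $c_n$ with $d(c_n,\tau(0))\ge R$; since $\alpha_n,\beta_n$ eventually lie in any prescribed small $U,V$, the boundedness input $(\star)$ applies and keeps $d(c_n,\tau(0))$ bounded, so after re-centring at the nearest point to $\tau(0)$ we extract a subsequential limit $c$: a bi-infinite geodesic joining $\xi$ to $\zeta$ with $d(c,\tau(0))\ge R$, hence $c\ne\tau$. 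As $c$ and $\tau$ are asymptotic at both ends, the flat strip theorem gives a flat strip $S\cong\mathbb{R}\times[0,w]$ with $\tau=\mathbb{R}\times\{0\}$ and $w=d(c,\tau)$; since $\tau$ is a boundary line of $S$ and $S$ is isometrically embedded, $\tau(0)$ realizes its distance to $c$ straight across the strip, so $w=d(c,\tau(0))\ge R$, and then $\mathbb{R}\times[0,R]\subset S$ is a flat strip of width $R$ bounded by $\tau$ — contradiction. This yields the estimate. For the last sentence: if a connecting geodesic $\tau'$ (with $\alpha\in U$, $\beta\in V$) bounded a flat strip $\mathbb{R}\times[0,2R]$ with $\tau'=\mathbb{R}\times\{0\}$, its opposite boundary line $\tau'':=\mathbb{R}\times\{2R\}$ also joins $\alpha$ to $\beta$; picking $(s_0,0)\in\tau'$ within $R$ of $\tau(0)$ gives $d(\tau'',\tau(0))\ge d\big((s_0,0),\tau''\big)-R=2R-R=R$, contradicting the estimate just proved, so $\tau'$ bounds no flat strip of width $2R$.

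\emph{Main obstacle.} Everything hinges on $(\star)$: that for $\alpha,\beta$ in sufficiently small neighbourhoods of $\xi,\zeta$ the connecting segments $\sigma_m$ do not drift arbitrarily far from $\tau$ as $m\to\infty$ (if they did, $\alpha$ and $\beta$ might fail to be joined by any geodesic and the limit $c$ would not exist). This is exactly the step where the hypothesis that $\tau$ bounds no flat strip of width $R$ must be used quantitatively, not merely through rank-$1$-ness: an escaping sequence of connecting segments would, after re-centring and passing to a limit, sweep out a flat region along $\tau$ of width $\ge R$, again by the flat strip and convexity machinery, which is forbidden. Converting this heuristic into a uniform bound — controlling the excursions of $\sigma_m$ beyond the fellow-travelling time $T$ by comparison with $\sigma_T$ and the two rays inside the thin geodesic quadrilateral they span — is the technical heart of the proof, and here we would follow Ballmann \cite{Ballman}.
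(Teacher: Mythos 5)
First, a point of reference: the paper offers no proof of this statement. Lemma~\ref{strip} is quoted verbatim from Ballmann \cite[Lemma III.3.1]{Ballman}, so there is no in-paper argument to measure your attempt against; the relevant comparison is with Ballmann's proof, to which you yourself ultimately defer.

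On its own terms, your write-up has the correct architecture --- Arzel\`a--Ascoli applied to connecting segments, the flat strip theorem converting a bi-asymptotic limit geodesic at distance $\geq R$ from $\tau(0)$ into a forbidden flat strip of width $\geq R$ bounded by $\tau$, and the ``opposite boundary line'' trick for the final clause --- and those last two reductions are carried out correctly. But the proof is incomplete, and you say so: everything is conditioned on $(\star)$, the uniform bound on $d(\tau(0),\sigma_m)$ for $\alpha,\beta$ in small enough neighbourhoods, and that uniform bound \emph{is} the content of the lemma. Without it, neither the existence of a connecting geodesic nor the estimate $d(\tau',\tau(0))<R$ gets off the ground: in both places the alternative to a convergent subsequence is that the connecting segments (respectively the geodesics $c_n$) escape every compact set, and the heuristic you offer for excluding this (``an escaping sequence would sweep out a flat region of width $\geq R$'') cannot be a routine application of the flat strip theorem, since an escaping sequence has no bi-infinite limit to which that theorem could be applied; this is exactly the step that must be done by hand with the convexity of $t\mapsto d(\,\cdot\,,\tau)$ and a diagonal/re-centring argument, and it is where the no-flat-strip hypothesis is actually consumed. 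There is also a smaller logical slip: in the distance-estimate paragraph you invoke $(\star)$ to bound $d(c_n,\tau(0))$ for \emph{arbitrary} geodesics $c_n$ joining $\alpha_n$ to $\beta_n$, whereas $(\star)$ as you formulated it concerns only the particular segments $\sigma_m=[\,r_\beta(m),r_\alpha(m)\,]$; two bi-asymptotic geodesics bound a flat strip of a priori uncontrolled width, so the bound does not transfer without a further argument. In short: the skeleton is right and agrees with the standard proof, but the load-bearing step is asserted rather than proved.
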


A \CAT space $X$ is said to be rank-1 if it has a rank-1 geodesic. The space $X$ 
is said to be geodesically complete if any geodesic segment can be extended to a bi-infinite geodesic.
Lemma \ref{strip} implies that if $X$ is rank-1 then for large enough $R>0$ pairs of points of $\partial_{\rm vis} X$ connected by a geodesic which does not bound a flat strip has nonempty interior. Moreover it can be easily seen to be $Isom(X)$ invariant.
\subsection{Group action on \CAT spaces}
For any element $g \in \Isom(X)$, the \emph{translation length} of $g$ is defined as 
\[
l(g)=\min \{d(x, gx) | x \in X \}.
\]
Any geodesic contained in $\{p\in X: d(p,gp)=l(p)\}$ is called an \emph{axis} of $g$. If a rank-1
geodesic $\gamma$ is an axis of an isometry $g \in \Isom(X)$, we call $\gamma$ a rank-1 axis and $g$ 
a rank-1 isometry. A rank isometry has exactly two fixed points in $\partial_{\rm vis} X$. 

Let $G$ be a group action on $X$ by isometries.  
The limit set $L(G)\subset \partial_{\rm vis} X$ of $G$ is the unique minimal closed $G$-invariant subset of $\partial_{\rm vis} X$.
An action by isometries $G\curvearrowright X$ is said to be rank-1 (and $G$ is said to be a rank-1 group) if it contains two rank-1 isometries whose pairs of fixed points in $\partial_{\rm vis} X$ are disjoint.
A geodesically complete \CAT space  admitting a rank-1 action $G \curvearrowright X$ always has a zero width geodesic with endpoints in $L(G)$(\cite{Ricks}, Proposition 8.14).
A cococompact action by isometries on a rank-1  \CAT space is always rank-1.

Let $SX$ denote the \emph{unit tangent bundle of $X$}, defined as the set of parametrized unit speed bi-infinite geodesics in $X$ endowed with the compact-open topology. 

Define \textit{footpoint projection} $ \pi_{\rm fp}:SX\to X$ by $ \pi_{\rm fp}(v)=v(0)$ 
for $v \in SX$. Let 
\[g_{t}:SX\to SX, \qquad g_t(\tau)(s)=\tau(s+t) \]
 be the geodesic flow.
Let $S_{G}X\subset SX$ denote the set of $v\in SX$ with $v_{\pm}\in L(G)$.
For $x,y\in X$ and $\zeta,\alpha  \in \partial_{\rm vis} X$ define the Busemann function 
\[
\beta_{\zeta}(x,y)=\lim_{z\to \zeta} d(x,z)-d(y,z)
\]
and the Gromov product
\[
\rho_{x}(\zeta,\alpha)=\lim_{z\to \alpha,w\to \zeta}(d(z,x)+d(w,x)-d(z,w))/2.
\]

Define a map $H_\go: SX\to \partial_{\rm vis} X\times \partial_{\rm vis} X\times \mathbb{R}$ by 
\[ H_{\go}(v)=(v_{+},v_{-},\beta_{\gamma}(\go,v(0)).\]
The restriction of $H_\go$ to the set $Z$ of zero width geodesics is one-to-one.
Let $[SX]$ be the set of equivalence classes of $SX$ under the equivalence relation $\sim$ given by 
$v\sim w$ if $H_{\go}(v)=H_{\fo}(w)$. This equivalence relation does not depend on the basepoint
$\go$.

\section{Genericity of frequently contracting geodesics with respect to Patterson-Sullivan measures}\label{Section5}
\label{recurrent}

The visual boundary $\partial_{\rm vis} X$  of a rank-1 \CAT space $X$ carries several natural classes of measures, corresponding to different averaging constructions over orbits of a properly discontinuous group action $G\curvearrowright X$.
In this section, we consider the Patterson-Sullivan measure, studied in this context by Ricks \cite{Ricks}. 

\begin{theorem}\label{psarereasonable}
Let $X$ be a proper geodesically complete \CAT space and let $G$ act on $X$ properly discontinuously and by isometries. Assume that $G\curvearrowright X$ admits a finite Bowen-Margulis measure (see below discussion; this assumption is satisfied when the action is cocompact). 
Let $\nu$ be the Patterson-Sullivan measure on $\partial_{\rm vis} X$. Then $\nu$ a.e.  point of $\partial_{\rm vis} X$ is frequently contracting.
\end{theorem}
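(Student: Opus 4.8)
The plan is to reduce Theorem~\ref{psarereasonable} to an ergodic-theoretic statement about the geodesic flow and then apply Lemma~\ref{conditiontobefrequentlycontracting}. The key point is that ``$\nu$-a.e.\ point of $\partial_{\rm vis}X$ is frequently contracting'' should follow once we know that a typical bi-infinite geodesic spends a definite proportion of its time in a ``thick part'' consisting of footpoints whose local geodesic segments are uniformly close to some $N$-contracting geodesic. First I would recall the Ricks construction: the Patterson-Sullivan measure $\nu$ on $\partial_{\rm vis}X$, together with its reflection and the Busemann cocycle, assembles (via the map $H_\go$ and the quotient $[SX]$) into the Bowen-Margulis measure $m_{\rm BM}$ on $[SX]$, which by hypothesis is finite and is invariant and ergodic under the geodesic flow $g_t$ (this is Ricks's main ergodicity theorem for rank-1 \CAT spaces). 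Normalizing, we get a $g_t$-invariant ergodic probability measure.

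Next I would fix the constants $N,C$ and $L$ and define the measurable set
\[
\mathcal{A}_{N,C,L} = \big\{ v \in [SX] : v(-L,L) \text{ is $C$-close to some $N$-contracting geodesic}\big\}.
\]
The crucial input is that $\mathcal{A}_{N,C,L}$ has positive $m_{\rm BM}$-measure for suitable choices of $N,C,L$. This is where rank-1-ness enters: since $X$ is rank-1 (which holds here because $G\curvearrowright X$ admits a finite Bowen-Margulis measure, forcing a nonelementary rank-1 action, hence a zero-width geodesic with endpoints in $L(G)$), Lemma~\ref{strip} gives an open set $U\times V$ of pairs of boundary points joined by a geodesic not bounding a flat strip of definite width; by a standard argument (e.g.\ Ballmann, or the fact that contracting-ness of a geodesic segment is an open condition and persists under the flow for bounded time) such geodesics contain a uniformly contracting central segment, and the set of such $v$ is open and nonempty in $[SX]$. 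Since $\nu$ has full support on $L(G)$ and $m_{\rm BM}$ is locally equivalent to $\nu\times\nu\times dt$, we get $m_{\rm BM}(\mathcal{A}_{N,C,L})=:m>0$.

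Then the Birkhoff Ergodic Theorem applied to the indicator $\mathbf{1}_{\mathcal{A}_{N,C,L}}$ and the flow $g_t$ yields, for $m_{\rm BM}$-a.e.\ $v$,
\[
\lim_{T\to\infty} \frac1T \int_0^T \mathbf{1}_{\mathcal{A}_{N,C,L}}(g_t v)\, dt = m.
\]
Unwinding the definitions, the set $\{t \in [0,T] : g_t v \in \mathcal{A}_{N,C,L}\}$ is exactly the set of $t$ for which $\tau(t-L,t+L)$ is $C$-close to an $N$-contracting geodesic, where $\tau$ is the geodesic underlying $v$; so this limit says precisely $\lim_{T\to\infty}\thick_\tau(T)/T = m$ for the corresponding ray. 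Since a $\nu$-generic boundary point arises as $\tau_+$ (or $\tau_-$) for an $m_{\rm BM}$-generic $v$ (push-forward of $m_{\rm BM}$ under $v\mapsto v_+$ is in the class of $\nu$, and one applies the argument to both $v$ and $g_t$ run backwards for $\tau_-$), Lemma~\ref{conditiontobefrequentlycontracting} applies and shows $\tau$ is $(N,C)$-frequently contracting; doing this for all $L$ simultaneously by intersecting the (countably many) full-measure sets over $L \in \mathbb{N}$ gives the conclusion for $\nu$-a.e.\ point.

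The main obstacle I expect is the positivity $m_{\rm BM}(\mathcal{A}_{N,C,L})>0$ together with uniformity of the constants: one must show that there is a single pair $(N,C)$ that works for all $L$ (the frequently-contracting definition quantifies $L$ after fixing $N,C$). This requires that a geodesic which is a limit of an $N_0$-contracting axis (say an axis of a rank-1 isometry of $G$) has \emph{every} sufficiently long central subsegment uniformly close to an $N$-contracting geodesic with $N,C$ independent of the length — which follows because a translate of the axis itself is the required $N$-contracting geodesic, so one only needs a lower bound on the measure of geodesics fellow-travelling a fixed axis for a long time, and this is positive and independent of $L$ by ergodicity and full support. A secondary technical point is handling the precise compatibility between the flow space $[SX]$ (which collapses flat strips) and genuine geodesic rays, and checking that the ``$C$-close to an $N$-contracting geodesic'' condition is genuinely measurable and flow-equivariant up to the bounded-time shift built into the window $[-L,L]$; these are routine once Ricks's framework is in place.
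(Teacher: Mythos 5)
Your proposal follows essentially the same route as the paper: pass to the Bowen--Margulis measure on the space of geodesics, apply the Birkhoff ergodic theorem to the flow-invariant average of the set of geodesics whose central $2L$-window is $C$-close to an $N$-contracting geodesic, and obtain positivity of that set's measure from the fact that the set of geodesics fellow-travelling a fixed $N$-contracting axis of a rank-1 element of $G$ for time $[-L,L]$ is open and nonempty, hence of positive measure by full support of $\tilde m$. Your intermediate appeal to Lemma~\ref{strip} (extracting uniformly contracting central segments from bounded-width geodesics) is the one vague and unnecessary detour, but your closing paragraph already replaces it with the paper's actual mechanism --- translates of a fixed contracting axis --- so the argument stands as written.
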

We begin with some generalities. Let $G\curvearrowright X$ be any properly discontinuous action of a countable group on a metric space. For $\go \in X$ let $B_R(\go)$ be the ball of radius $R$ in $X$ centered at $\go$.

The (possibly zero or infinite) quantity 
\[\delta_X(G)=\lim \sup_{R\to \infty} R^{-1} \log |B_{R}(\go)\cap G \cdot \go|\] 
is called the critical exponent of the action $G\curvearrowright X$. 
It is easy to see that it does not depend on the basepoint $\go$.
If $G\curvearrowright X_i$ are properly discontinuous actions on  metric spaces and $f:X\to Y$ a map with $d(f(x),f(y))\leq Kd(x,y)+c$ it is easy to see that $\delta_Y(G)\leq K^{-1}\delta_X(G)$. 
If $X=Cay(G,S)$ is a Cayley graph of a finitely generated group then clearly $\delta_X(G)\leq \log |S|<\infty.$ Consequently, if $G\curvearrowright X$ is any properly discontinuous action of a finitely generated group on a metric space then $\delta_X(G)<\infty$ since the orbit map $Cay(G,S)\to X$ is coarsely Lipschitz. Moreover, $\delta_X(G)$ is always finite when $X$ admits some cocompact actions by a properly discontinuous group of isometries (\cite{Burger-Mozes}, Lemma 1.5 and Proposition 1.7).

On the other hand, when $G$ is non-amenable we always have $\delta_X(G)>0$ (see e.g. Giulio's email writeup, this is a thing everyone knows and no one writes). 
We now specialize to the case when $G\curvearrowright X$ is a propery discontinuous rank $1$ action on a $\CAT$ space such that there is some zero width geodesic with endpoints in $L(G)$, and assume $\delta_X(G)<\infty$. Since loxodromics with disjoint endpoints generate a free group in $Isom(X)$, $G$ is nonamenable so $\delta_X(G)>0$

The action $G\curvearrowright X$ is said to be \emph{divergent} if the Poincare series $\sum_{g\in G} e^{-sd(g\go,\go)}$ diverges at $s=\delta(G)$ and is said to be \emph{convergent} otherwise. If the action $G\curvearrowright X$ is properly discontinuous and cocompact it is necessarily divergent (\cite{Ricks}, Theorem 3).

A \emph{$\delta(G)$-conformal density} for $G\curvearrowright X$ is an absolutely continuous family of finite Borel measures $\nu_{x}, x\in X$ on $L(G)$  such that 
$$d\nu_x/d\nu_y (\zeta)=\exp(\delta(G)\beta_{\zeta}(y,x))$$ and $g\nu_x=\nu_{g^{-1}x}$ for any $x,y\in X$ and $g\in G$. Any such family is determined by any one of the measures $\nu_{\go}$, $\go \in X$ which we can normalize to be a probability measure.  A $\delta(G)$ conformal density always exists when $G$ is non-elementary and $\delta(G)<\infty$. When $G\curvearrowright X$ is divergent there is a unique conformal density for $G\curvearrowright X$ (see \cite{Link}, Theorems 10.1 and 10.2 and the remark after Theorem 10.1);  the measure $\nu_{\go}$ is called the \emph{Patterson-Sullivan measure}.
When $G\curvearrowright X$ is cocompact, the Patterson-Sullivan measure can be interpreted as the unique weak limit of ball averages over $G$ orbits in the metric on $X$.
More precisely, for $\go \in X$ we may consider the family of measures on $X$ given by $$\nu_{R,\go}=|G\cdot \go \cap B_{R}(\go)|^{-1}\sum_{g \go \in G\cdot \go \cap B_{R}(\go) }D_{g \go}$$ where $D_{g \go}$ denotes the point mass at $g \go$. Considering $\nu_{R,\go}$ as probability measures on the compact space $X\cup \partial_{\rm vis} X$, they converge (in the weak topology) as $R\to \infty$ to a scalar multiple of $\nu_{\go}$.

In the context of \CAT spaces,  conformal densities and Patterson-Sullivan measures were introduced by Ricks \cite{Ricks}.
Conformal densities can be used to construct a $G$ and geodesic flow invariant measure on $SX$ as follows.  The measure $\nu_\go \times \nu_\go $ gives full measure to endpoints of zero width geodesics, and thus after taking the product with the arc-length normalized Lebesgue measure $dL$ can be considered a measure on $SX$. Using the conformal density property, we can find a $G$-invariant and geodesic flow invariant Radon measure $\tilde{m}$ on $SX$ in the measure class of $\nu_\go \times \nu_\go \times dL$ (see \cite{Ricks} for details). This measure $\tilde{m}$ projects to a geodesic flow invariant measure $m$ on $SX/G$; both $m$ and $\tilde{m}$ are called the \emph{Bowen-Margulis measure}. When the Bowen-Margulis measure on $SX/G$ is finite (as is the case for instance when $G\curvearrowright X$ is cocompact) it is ergodic with respect to $g_t$ (\cite{Ricks}, Theorem 3) and the Patterson-Sullivan measure is the weak limit of ball averages as in the cocompact case \cite{Link2}. Recall that $S_{G}X\subset SX$ denotes the set of $v\in SX$ with $v_{\pm}\in L(G)$. We summarize the properties of the Patterson-Sullivan measure which we will use below.
\begin{lemma}\label{PSandBMproperties} \cite[Theorem 3]{Ricks}
Suppose $G\curvearrowright X$ is a non-elementary divergent action with $\delta(G)<\infty$ on a rank-1 \CAT space.  Assume there is some zero width geodesic with endpoints in the limit set $L(G)$. Assume the Bowen-Margulis measure $m$ on $ SX/G$ is finite. Then, it is ergodic with respect to the geodesic flow and gives full weight to zero width geodesics.  Furthermore, the Patterson-Sullivan measures $\nu_\go$ on $\partial_{\rm vis} X$ have full support on $L(G)$ and has no atoms, and the Bowen-Margulis measure $\tilde{m}$ has full support on $S_{G}X$.
\end{lemma}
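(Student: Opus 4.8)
The statement is essentially a repackaging of \cite[Theorem~3]{Ricks} together with two standard facts about conformal densities. Under the hypotheses of the lemma (a non-elementary, divergent, rank-1 action with $\delta(G)<\infty$, a zero-width geodesic with endpoints in $L(G)$, and finite Bowen--Margulis measure $m$), \cite[Theorem~3]{Ricks} directly yields that $m$ is ergodic with respect to the geodesic flow and that $m$ --- hence also $\tilde m$ --- gives full weight to the zero-width geodesics; divergence of the action also guarantees uniqueness of the conformal density, so that $\nu_\go$ is well defined. The plan is therefore to quote this and to derive the remaining assertions (full support and atomlessness) directly.

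For the support of $\nu_\go$: the conformal density $(\nu_x)_{x\in X}$ is $G$--quasi-invariant and concentrated on $L(G)$, so $\operatorname{supp}\nu_\go$ is a nonempty closed $G$--invariant subset of $L(G)$; since $L(G)$ is, by definition, the \emph{unique minimal} closed $G$--invariant subset of $\partial_{\rm vis}X$, we get $\operatorname{supp}\nu_\go=L(G)$. Atomlessness of $\nu_\go$ is the usual consequence of non-elementarity: a putative atom at $\zeta$ would, via $G$--quasi-invariance with conformal Radon--Nikodym factors and the normalization coming from divergence, force a $G$--invariant finite set of atoms of equal mass, so $G$ would preserve a finite subset of $\partial_{\rm vis}X$, contradicting the presence of two rank-1 isometries with disjoint fixed-point pairs. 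Finally, since $\tilde m$ lies in the measure class of $\nu_\go\times\nu_\go\times dL$, full support of $\nu_\go$ on $L(G)$ gives that $\nu_\go\times\nu_\go$ has full support on $L(G)\times L(G)$; combined with \lemref{strip} --- which connects pairs of points near the endpoints of a zero-width geodesic by zero-width geodesics --- and the $dL$--factor in the $\RR$--direction, this shows $\operatorname{supp}\tilde m\supseteq S_GX$, while the reverse inclusion holds because $\tilde m$ is carried by the set of geodesics with endpoints in $L(G)$, i.e.\ by $S_GX$. Hence $\operatorname{supp}\tilde m=S_GX$.

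The one genuinely non-formal ingredient, which I would import as a black box, is the ergodicity of $m$: this is Ricks's theorem, proved by the Hopf argument adapted to rank-1 \CAT spaces --- conservativity of the geodesic flow (from divergence of the Poincar\'e series) followed by the standard Hopf dichotomy exploiting the product structure of $\tilde m$ in the $\partial_{\rm vis}X\times\partial_{\rm vis}X\times\RR$ coordinates. Everything else is either bookkeeping or a routine application of quasi-invariance of the conformal density and minimality of the limit set.
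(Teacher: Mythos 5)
The paper offers no proof of this lemma at all: it is stated as a direct citation of \cite[Theorem 3]{Ricks} (with uniqueness of the conformal density coming from Link), so your decision to import ergodicity and the full weight on zero-width geodesics as a black box matches the paper's intent, and your minimality argument for $\operatorname{supp}\nu_\go=L(G)$ is the standard and correct one. However, two of your supplementary derivations are not quite right as written.

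First, atomlessness. The set of atoms of maximal mass is \emph{not} $G$-invariant, because the conformal density transforms with the nontrivial cocycle $e^{\delta(G)\beta_\zeta(\cdot,\cdot)}$: orbit atoms do not have equal mass, so you cannot conclude that $G$ preserves a finite subset of $\partial_{\rm vis}X$. The correct use of divergence is quantitative: from $g\nu_\go=\nu_{g^{-1}\go}$ one gets $\nu_\go(\{g\zeta\})\ge e^{-\delta(G) d(\go,g\go)}\nu_\go(\{\zeta\})$, and summing this against the divergent Poincar\'e series contradicts finiteness of $\nu_\go$ (after controlling the stabilizer of $\zeta$); alternatively one invokes the Hopf--Tsuji--Sullivan dichotomy (Link, Theorems 10.1--10.2), under which divergence forces the measure to be nonatomic and concentrated on the radial limit set. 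Second, full support of $\tilde m$ on $S_{G}X$: Lemma~\ref{strip} applies only to geodesics that do not bound wide flat strips, whereas $S_{G}X$ contains \emph{all} geodesics with endpoints in $L(G)$. If $v\in S_{G}X$ bounds a flat strip, pairs of endpoints near $(v_-,v_+)$ may be joined only by geodesics parallel to, but uniformly far from, $v$, so positivity of $\nu_\go\times\nu_\go$ on a neighborhood of $(v_-,v_+)$ does not immediately place $\tilde m$-mass near $v$ itself. This is exactly the point that Ricks's flat-strip analysis is designed to handle, and it is safer to cite it than to rederive it from Lemma~\ref{strip}.
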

Note, ergodicity of the geodesic flow $g_t$ on $(SX/G,m)$ by duality implies (in fact is equivalent to) ergodicity of the $G\curvearrowright (\partial_{\rm vis} X\times \partial_{\rm vis} X,\nu \times \nu)$.

We now prove Theorem \ref{psarereasonable}. We will show that for $\nu_\go$-a.e. $\zeta \in \partial_{\rm vis} X$ any geodesic ray converging to $\zeta$ satisfies the condition of Lemma  \ref{conditiontobefrequentlycontracting}. 
Since two geodesic rays converging to the same point of $\partial_{\rm vis} X$ are asymptotic, and the condition of Lemma  \ref{conditiontobefrequentlycontracting} is invariant under asymptotic equivalence classes, it suffices to prove that with respect to the Bowen-Margulis measure $\tilde{m}$ on $SX$, a.e.  geodesic satisfies the condition of Lemma \ref{conditiontobefrequentlycontracting}.

For any Borel set $V \subset SX/G$ the Birkhoff Ergodic Theorem and the ergodicity of $m$ with respect to the geodesic flow $g_t$ imply, for $m$-a.e.  $v\in SX/G$, we have
$$\lim_{T\to \infty}|\{t\in [0,T]:g_t v \in V\}|/T\to m(V).$$
Consequently, if $W$ is any $G$-invariant Borel subset of $SX$, then for $\tilde{m}$-a.e.  $v\in SX$, we have
\begin{equation}\label{E1}
    \lim_{T\to \infty}|\{t\in [0,T]:g_t v \in W\}|/T\to m(W/G).
\end{equation}
Note, $m(W/G)>0$ when $\tilde{m}(W)>0$.

For every $L>0, N>0, C>0$, define $W_{L,N,C}$ to be the set of $v\in SX$ such that $[ \pi_{\rm fp}(g_{-L}v),  \pi_{\rm fp}(g_{L}v)]$ is contained in a $C$-neighbourhood of an $N$-contracting geodesic. The set $W_{L,N,C}$ is $G$-invariant and Borel. Apply Equation~\ref{E1}, we get
$$\lim_{T\to \infty}|\{t\in [0,T]:g_t v \in W_{L,N, C}\}|/T\to m(W_{L,N, C}/G).$$ 
Any $v$ for which the left hand side of the above equation converges to a positive number  satisfies the condition of Lemma~\ref{conditiontobefrequentlycontracting}. Thus, it suffices to show that there is an $N, C>0$ such that for each $L>0$ we have $\tilde{m}(W_{L,N, C})>0$. 

To that end, let $N>0$ be large enough so that there exists an $N$-contracting axis, $v_0 \in SX$, for a hyperbolic isometry $g \in G$. Let $L, C>0$ and let $W_{v_0}\subset SX$ be the set of $v$ such that $d( \pi_{\rm fp}(g_{t}v),v_0)<C$ for all $t\in (-L,L)\}$. Clearly $W_{v_0}\subset W_{L,N, C}$.  Note $W_{v_0}$ is an open subset of $SX$.  Furthermore, it contains $v_0 \in SX$. Since $\tilde{m}$ has full support on $SX$ it follows that $\tilde{m}(W_{v_0})>0$ and thus $\tilde{m}(W_{L,N,C})>0$. 

\section{Stationary measures and random walks}\label{Section6}

The other family of measures on $\partial_{\rm vis} X$ we are interested in are stationary measures associated to random walks coming from finitely supported measures $\mu$ on $G$. 
In this section we prove: 

\begin{theorem}\label{harmonicarereasonable}
Let $X$ be a \CAT space and let $G\curvearrowright X$ be a rank-1 action.
Let $\nu$ be the stationary measure on  $\partial_{\rm vis} X$ coming from a finitely supported random walk on $G$. 
Then $\nu$-a.e. point of $\partial_{\rm vis} X$ is frequently contracting.
\end{theorem}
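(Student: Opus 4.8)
The plan is to attach to almost every bilateral sample path of the $\mu$-random walk a bi-infinite rank-$1$ geodesic $\tau_\omega$ of $X$, to show that on a positive-measure set of sample paths this geodesic fellow-travels a fixed contracting axis near its basepoint, and then to feed this into Birkhoff's ergodic theorem so as to verify, for $\tau_\omega$, the hypothesis of Lemma~\ref{conditiontobefrequentlycontracting}.

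\emph{Set-up.} Work on the bilateral increment space $\check\Omega=(G^{\ZZ},\mu^{\otimes\ZZ})$ with the (measure-preserving, ergodic) Bernoulli shift $U$, and write $w_n=g_1\cdots g_n$ for $n\ge 0$ and $w_{-n}=g_0^{-1}\cdots g_{-n+1}^{-1}$ for $n>0$, so $w_0=e$. Since $\mu$ is non-degenerate and $G\curvearrowright X$ is rank-$1$ — hence non-elementary, and $G$ contains a rank-$1$, therefore strongly contracting, isometry $h$ whose axis $\alpha$ is $N$-contracting for some $N$ — the standard theory of random walks in the presence of a contracting element, the structure theory of rank-$1$ \CAT spaces (Ballmann, Ricks \cite{Ricks}, Lemma~\ref{strip}), and the sublinear tracking theorem (Karlsson--Margulis \cite{KarlssonMargulis} and its generalizations) give, $\check\PP$-a.s.: $w_{\pm n}\go$ converge to points $\xi_\pm(\omega)\in\partial_{\rm vis}X$, with $\xi_+$ distributed by the stationary measure $\nu$ and $\xi_-$ by the stationary measure $\check\nu$ of the reflected walk; the drift $\ell=\lim_n d(\go,w_n\go)/n$ exists and is a positive constant; and $\xi_\pm(\omega)$ are joined by a rank-$1$ bi-infinite geodesic (the rank-$1$ locus in $\partial_{\rm vis}X\times\partial_{\rm vis}X$ is $U$-invariant and, by Lemma~\ref{strip} applied to a rank-$1$ axis with endpoints in $L(G)$, of positive measure, hence conull by ergodicity of $U$; alternatively this is available from the double ergodicity of the Poisson boundary). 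Let $\tau_\omega$ be the middle geodesic of the (finite-width) flat strip of geodesics joining $\xi_-(\omega)$ to $\xi_+(\omega)$; this choice is $G$-equivariant, so $\tau_{U\omega}=g_1^{-1}\tau_\omega$ exactly. Parametrize $\tau_\omega$ so that $\tau_\omega(0)=\pi_{\tau_\omega}(\go)$. Then there is a bounded (because $\mu$ is finitely supported) function $\Delta$ on $\check\Omega$ such that $\sigma_\omega(n):=\sum_{k=0}^{n-1}\Delta(U^k\omega)$ is the parameter of $\pi_{\tau_\omega}(w_n\go)$; one has $\sigma_\omega(n)/n\to\ell$, and, fixing $L,C$ and writing $\mathbf 1_{\mathrm{th}}(\tau,s)=1$ iff $\tau(s-L,s+L)$ lies in the $C$-neighbourhood of some $N$-contracting geodesic, the identity $\mathbf 1_{\mathrm{th}}(\tau_{U^k\omega},s)=\mathbf 1_{\mathrm{th}}(\tau_\omega,\,s+\sigma_\omega(k))$.

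\emph{The thick event.} For each $L>0$ I would produce $A=A_L\subset\check\Omega$ with $\check\PP(A)>0$ on which $\tau_\omega$ lies in the $C_0$-neighbourhood of $\alpha$ over a time interval $W_\omega$, with $C_0$ depending only on $N$, chosen (by shrinking the neighbourhoods below) so that $\tau_\omega$ is thick at every point of $[-2L,2L]$ and of $[\sigma_\omega(0),\sigma_\omega(1)]$; note $|\sigma_\omega(1)-\sigma_\omega(0)|\le\max_{s\in\supp\mu}d(\go,s\go)$ is bounded, so this is a finite requirement. Parametrize $\alpha$ so $\alpha(0)=\pi_\alpha(\go)$ and let $\alpha_\mp\in L(G)$ be its endpoints. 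Since $\alpha$ is $N$-contracting, a standard shadow/Morse estimate together with Lemma~\ref{strip} furnishes open neighbourhoods $U^-,U^+$ of $\alpha_-,\alpha_+$ (shrinking as $L$ grows) such that every bi-infinite geodesic with one end in $U^-$ and one in $U^+$ lies in the $C_0$-neighbourhood of $\alpha$ over a long window about $\alpha(0)$; set $A=\{\xi_-\in U^-\}\cap\{\xi_+\in U^+\}$. As $\xi_-$ and $\xi_+$ depend on disjoint blocks of increments they are independent, so $\check\PP(A)=\check\nu(U^-)\,\nu(U^+)>0$, the neighbourhoods being open and containing points of $L(G)=\supp\nu=\supp\check\nu$. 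On $A$, $\tau_\omega(0)=\pi_{\tau_\omega}(\go)$ is within bounded distance of $\alpha(0)$, so the prescribed thickness holds. This is the quantitative Kaimanovich--Masur-type input, in the spirit of \cite{Kaimanovich-Masur} as used by Baik--Gekhtman--\Ham \cite{BGH}.

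\emph{Conclusion.} Fix $L$, $A=A_L$, $C=C_0$, and let $F(\omega)=\int_{\sigma_\omega(0)}^{\sigma_\omega(1)}\mathbf 1_{\mathrm{th}}(\tau_\omega,t)\,dt$ (a signed integral when $\sigma_\omega(1)<0$). The equivariance identity and $\sigma_\omega(n)=\sum_{k<n}\Delta(U^k\omega)$ give $\sum_{k=0}^{n-1}F(U^k\omega)=\int_0^{\sigma_\omega(n)}\mathbf 1_{\mathrm{th}}(\tau_\omega,t)\,dt=\thick_{\tau_\omega}(\sigma_\omega(n))$ once $\sigma_\omega(n)\ge0$ (true for all large $n$). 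Birkhoff's theorem applied to $F$ and to $\Delta$, with $U$ ergodic, yields $\thick_{\tau_\omega}(\sigma_\omega(n))/n\to\EE[F]$ and $\sigma_\omega(n)/n\to\ell>0$ a.s.; since $\thick_{\tau_\omega}$ is nondecreasing and $1$-Lipschitz with uniformly bounded gaps $\sigma_\omega(n+1)-\sigma_\omega(n)$, this upgrades to $\lim_{T\to\infty}\thick_{\tau_\omega}(T)/T=\EE[F]/\ell=:m$ a.s. Positivity $m>0$ follows because the visit set $\{k:U^k\omega\in A\}$ has asymptotic density $\check\PP(A)>0$ by Birkhoff, each of its elements makes $\tau_\omega$ thick on all of $[\sigma_\omega(k)-2L,\sigma_\omega(k)+2L]$, and $\sigma_\omega(k)\sim\ell k$ with bounded increments, so the union of these intervals for $k\le n$ has Lebesgue measure bounded below by a positive multiple of $\sigma_\omega(n)$. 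Hence $\tau_\omega$ satisfies the hypothesis of Lemma~\ref{conditiontobefrequentlycontracting} and is frequently contracting $\check\PP$-a.s.; as $\xi_+\sim\nu$, every geodesic ray toward $\xi_+(\omega)$ is asymptotic to the forward subray of $\tau_\omega$, and frequent contraction is invariant under asymptotic equivalence, so $\nu$-a.e.\ point of $\partial_{\rm vis}X$ is frequently contracting. The main obstacle is establishing the a.s.\ properties listed in the set-up — convergence to $\partial_{\rm vis}X$, positivity of drift, sublinear tracking, and rank-$1$-ness of $\tau_\omega$ — for a merely non-elementary, not necessarily cocompact, rank-$1$ action; this is the substantive quantitative Kaimanovich--Masur/Karlsson--Ledrappier part, whereas the thick event and the Birkhoff argument are routine, the only mildly technical point being the real-variables conversion of positive-frequency visits to $A$ into a positive lower density of the thick part of $\tau_\omega$.
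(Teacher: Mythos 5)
Your set-up and your ``thick event'' coincide with the paper's: the event $A=\{\xi_-\in U^-\}\cap\{\xi_+\in U^+\}$ is exactly the set $\Lambda(L,C)$ constructed in the Claim inside the proof of Lemma~\ref{decaylemma} (via Lemma~\ref{strip} and full support of the harmonic measure on $L(G)$), and your passage from the bilateral geodesic $\tau_\omega$ to the ray toward $\xi_+$ is the paper's deduction of Proposition~\ref{longfellowtravel} from Proposition~\ref{longfellowtravelbilateral}. Where you genuinely diverge is the endgame. You try to verify the density criterion of Lemma~\ref{conditiontobefrequentlycontracting} by applying Birkhoff to the positive-measure event $A$ and concluding that $\thick_{\tau_\omega}(T)/T$ has a positive limit. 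The paper does not use that criterion here at all: it introduces the much weaker events $\Omega(L,C,R)$ (``some contracting window exists within distance $R/2$ of the projection of $w_i\go$''), shows their measure tends to $1$ as $R\to\infty$ (Lemma~\ref{decaylemma}, which itself only needs \emph{one} visit to $\Lambda(L,C)$ among $i\lesssim R$), and then argues that a Birkhoff density close to $1$ forces at least one visit to $\Omega(L,C,R)$ in every block of indices of definite relative length --- which already plants a contracting window in every interval $[aR,bR]$ and verifies the definition of frequent contraction directly.

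This difference is where your argument has a genuine gap: the positivity $\EE[F]>0$ (equivalently, a positive lower density of the thick part). Note first that $F$ is a signed quantity, so $\EE[F]>0$ is not automatic from $\check\PP(A)>0$. Your geometric substitute --- that the union of the intervals $[\sigma_\omega(k)-2L,\sigma_\omega(k)+2L]$ over visits $k\le n$ to $A$ has measure bounded below by a positive multiple of $\sigma_\omega(n)$ --- does not follow from ``$\sigma_\omega(k)\sim\ell k$ with bounded increments.'' Those facts only control $\sigma_\omega(k)$ up to an error $o(k)=o(n)$, which permits the projection parameters of a density-$\check\PP(A)$ set of visit times to cluster into $o(n)$ (even $O(1)$) windows of width $4L$; in that scenario the union has sublinear measure and your limit could be $0$, which kills the application of Lemma~\ref{conditiontobefrequentlycontracting}. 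To close this you need a separation input you never state: on $A$ the orbit point $w_k\go$ lies within a uniformly bounded distance of $\tau_\omega(\sigma_\omega(k))$, so two visits $k<k'$ with $|\sigma_\omega(k)-\sigma_\omega(k')|\le 4L$ force $w_k^{-1}w_{k'}$ into a fixed finite subset of $G$ (proper discontinuity), and then nonamenability (spectral radius $<1$) bounds the expected number of such pairs by $O(n)$, after which a Cauchy--Schwarz covering estimate yields the linear lower bound. That is a real, nontrivial addition --- or you can simply adopt the paper's high-probability events $\Omega(L,C,R)$, which sidestep clustering entirely because only one visit per block is required.
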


Let $G$ be an infinite group. Let $\mu$ be a symmetric probability measure on $G$ and
let $\mu^{\mathbb{Z}}$ be the product measure on $G^{\mathbb{Z}}$. Let $T \from G^{\mathbb{Z}}\to G^{\mathbb{Z}}$ be the following invertible transformation:
$T$ takes the two-sided sequence $(h_{n})_{n\in \mathbb{Z}}$
to the sequence $(\omega_{n})_{n\in \mathbb{Z}}$ with
$\omega_{0}=e$, 
$$\omega_{n}=h_{1}\cdots h_{n}\quad \text{ for }n>0$$ and
$$\omega_{n}=h^{-1}_{0}h^{-1}_{-1}\cdots h^{-1}_{-n+1}\quad \text{ for }n<0.$$

Similarly, let $\mu^{\mathbb{N}}$ be the product measure on $G^{\mathbb{N}}$.
Let $T_{+}:G^{\mathbb{N}}\to G^{\mathbb{N}}$ be the transformation that
takes the one-sided infinite sequence $(h_{n})_{n\in \mathbb{N}}$
to the sequence $(\omega_{n})_{n\in \mathbb{N}}$ with
$\omega_{0}=e$ and
$$\omega_{n}=h_{1}\cdots  h_{n}.$$

Let $\overline{P}$ be the pushforward measure
$T_{*}\mu^{\mathbb{Z}}$ on $G^{\mathbb{Z}}$and $P$ the pushforward measure
$T_{+*}\mu^{\mathbb{N}}$ on $G^{\mathbb{N}}$. The measure $P$ describes the distribution of $\mu$
sample paths, i.e. of products of independent $\mu$-distributed increments.
Let $\hat{\mu}$ be the measure on $G$ given by $\hat{\mu}(g)=\mu(g^{-1})$.
Let $\hat{P}$ be the pushforward measure
$T_{+*}\hat{\mu}^{\mathbb{N}}$ on $G^{\mathbb{N}}$. The measure space $(G^{\mathbb{Z}},\overline{P})$ is
naturally isomorphic to $(G^{\mathbb{N}},P)\otimes (G^{\mathbb{N}},\hat{P})$
via the map sending the bilateral path $\omega$ to the pair of unilateral
paths
$((\omega_{n})_{n\in \mathbb{N}},(\omega_{-n})_{n\in \mathbb{N}})$.

Let $\sigma: G^{\mathbb{Z}}\to G^{\mathbb{Z}}$ be the left Bernoulli shift:
$\sigma(\omega)_{n}=\omega_{n+1}.$
By basic symbolic dynamics, $\sigma$ is invertible,
measure-preserving and ergodic with respect to $\mu^{\Z}$.
Therefore, when restricted to sequences with $e$ at the $0$-th coordinate,
\[U=T\circ \sigma \circ T^{-1}\] is invertible, measure-preserving and
ergodic with respect to $\overline{P}$.
Note that for each $n\in \mathbb{Z}$,
$$(U\omega)_{n}=\omega^{-1}_{1}\omega_{n+1}$$
and more generally
$$(U^{k}\omega)_{n}=\omega^{-1}_{k}\omega_{n+k}.$$

Suppose $G$ acts continuously on an infinite Hausdorff space $B$. A Borel probability measure $\nu$ on $B$ is called \emph{$(G,\mu)$-stationary} if $$\nu(A)=\sum_{g\in G}\nu(g^{-1}A)\mu(g)$$ for all Borel $A\subset B$. The following is classical, see e.g. \cite{GGPY}, Theorem 9.4.
\begin{lemma}\label{stationarymeasureshavefullsupportandnoatoms}
Suppose $G\curvearrowright B$ is a minimal action on a compact Hausdorff space such that every $G$ orbit in $B$ is infinite. Let $\nu$ be a stationary measure on $B$. Then $\nu$ has no atoms and has full support on $B$.
\end{lemma}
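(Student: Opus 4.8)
The statement is classical, so the plan is to reproduce the standard argument for $\mu$-stationary measures, using only the defining identity $\nu(A)=\sum_{g}\mu(g)\,\nu(g^{-1}A)$, minimality of $G\curvearrowright B$, and the hypothesis that every orbit is infinite. Throughout I use that the subsemigroup of $G$ generated by $\operatorname{supp}(\mu)$ is all of $G$; this is part of the hypotheses in the applications (cf. Theorem~\ref{fullRWmeasuresublinear}) and is automatic when $\mu$ is symmetric and generating.

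\emph{Full support.} First I would show that $S:=\operatorname{supp}(\nu)$ is $G$-invariant. Let $U=B\setminus S$ be the largest open $\nu$-null set. For $g\in\operatorname{supp}(\mu)$, applying stationarity to $A=U$ gives $0=\nu(U)\geq\mu(g)\,\nu(g^{-1}U)$, so $\nu(g^{-1}U)=0$; since $g^{-1}U$ is open this forces $g^{-1}U\subseteq U$, equivalently $g\cdot S\subseteq S$. Writing an arbitrary $h\in G$ as a product of elements of $\operatorname{supp}(\mu)$ and iterating yields $h\cdot S\subseteq S$ for all $h\in G$; applying this to $h$ and to $h^{-1}$ gives $h\cdot S=S$. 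Thus $S$ is a nonempty closed $G$-invariant set, and minimality forces $S=B$.

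\emph{No atoms.} Suppose for contradiction that $\nu$ has an atom, and set $a=\sup\{\nu(\{x\}):x\in B\}>0$. Since $\nu(B)=1$, the set $\{x:\nu(\{x\})\geq a/2\}$ is finite, so the supremum is attained and $F:=\{x\in B:\nu(\{x\})=a\}$ is finite and nonempty. For $x\in F$, stationarity gives $a=\nu(\{x\})=\sum_{g}\mu(g)\,\nu(\{g^{-1}x\})\leq\sum_{g}\mu(g)\,a=a$, and equality (together with $\nu(\{g^{-1}x\})\leq a$ for all $g$) forces $\nu(\{g^{-1}x\})=a$, i.e. $g^{-1}x\in F$, for every $g\in\operatorname{supp}(\mu)$. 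Hence $g^{-1}F\subseteq F$ for all $g\in\operatorname{supp}(\mu)$; as $F$ is finite and $g^{-1}$ injective, $g^{-1}F=F$, so $F$ is invariant under $\operatorname{supp}(\mu)$ and therefore under $G$. But then for any $x_0\in F$ the orbit $G\cdot x_0\subseteq F$ is finite, contradicting the hypothesis that every $G$-orbit in $B$ is infinite.

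There is no serious obstacle here: the only points needing a little care are the finiteness of the set of maximal atoms (so that the supremum $a$ is realized) and tracking the direction of the inclusions produced by the stationarity identity — it is here that the passage from one-sided $\operatorname{supp}(\mu)$-invariance to genuine $G$-invariance, hence the semigroup-generation hypothesis on $\operatorname{supp}(\mu)$, is used.
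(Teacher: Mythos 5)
Your proof is correct and is exactly the standard classical argument; the paper does not actually prove this lemma but defers to the literature (``see e.g.\ \cite{GGPY}, Theorem 9.4''), so your write-up supplies the argument being cited, and in particular you rightly make explicit the hypothesis -- omitted from the lemma's statement but present in all of the paper's applications -- that $\operatorname{supp}(\mu)$ generates $G$ as a semigroup, without which the conclusion fails (e.g.\ for $\mu=\delta_e$). The only cosmetic caveat is that your full-support step takes for granted that the union of all open $\nu$-null sets is itself $\nu$-null, which is automatic in the compact metrizable spaces arising in the paper but deserves a word (or the pointwise reformulation ``$x\in\operatorname{supp}(\nu)\Rightarrow gx\in\operatorname{supp}(\nu)$ for $g\in\operatorname{supp}(\mu)$'') if one insists on general compact Hausdorff $B$.
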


Suppose now we have a bordification $Z=X\cup B$ of a metric space $X$, such that for any basepoint $\go \in X$ and $P$-a.e.  sample path $\omega=(\omega_n)_{n\in \mathbb{N}}$ the sequence $\omega_n \go$ converges to a point $\omega_\infty \in B$ independent of the basepoint $\go$. The probability measure on $B$ given by $\nu(A)=P(\omega:\omega_\infty \in A)$ is then clearly a stationary measure on $B$; moreover  for $P$-a.e. $\omega\in G^\mathbb{N}$ the pushforward measures $\omega^{*}_{n}\nu$ weakly converges to an atom concentrated at some $\omega_\infty \in B$. A space $B$ with a stationary measure $\nu$ satisfying the last condition is called a $(G,\mu)$ boundary. A  $(G,\mu)$ boundary $(B,\nu)$ is said to be a Poisson boundary of $(G,\mu)$ if it is maximal in the sense that for any other $(G,\mu)$ boundary  $(B',\nu')$  there is a $G$ equivariant measurable surjection $B\to B'$. The Poisson boundary is unique up to $G$-equivariant measurable isomorphism.

Karlsson and Margulis \cite{KarlssonMargulis} showed that under mild conditions the visual boundary of a \CAT space provides a model for the Poisson boundary of a group acting on the space. 
\begin{theorem}\cite{KarlssonMargulis}
Let $X$ be a \CAT space with basepoint $\go\in X$ and let $G\curvearrowright X$ be a nonamenable group acting on $X$ by isometries with bounded exponential growth. Let $\mu$ be a probability measure on $G$ whose finite support generates $G$ as a semigroup. Then for $P=P^\mu$-a.e.  $\omega \in G^\mathbb{N}$, $\omega_n \go$ converges to a point $\omega_\infty \in \partial_{\rm vis} X$. Moreover, $(\partial_{\rm vis} X, \nu)$ is a model for the Poisson boundary of $(G,\mu)$ where $\nu$ is the stationary measure on $\partial_{\rm vis} X$ defined by $\nu(A)=P(\omega_\infty \in A)$. 
\end{theorem}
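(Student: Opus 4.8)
The plan is to obtain this from the Karlsson--Margulis multiplicative ergodic theorem for spaces of nonpositive curvature, followed by Kaimanovich's entropy (``ray'') criterion to identify the Poisson boundary. First I would consider the subadditive cocycle $a(n,\omega)=d(\go,\omega_n\go)$ over the shift on $(G^{\mathbb{N}},P)$, or over the invertible ergodic system $(G^{\mathbb{Z}},\overline{P},U)$; since $\mu$ has finite support and $G$ acts by isometries, $a$ is integrable, so Kingman's subadditive ergodic theorem produces a $P$-a.s.\ constant \emph{drift} $\ell=\lim_n\tfrac1n\,d(\go,\omega_n\go)$. The first substantive point is that $\ell>0$, which is exactly where nonamenability of $G$ enters (together with the bounded exponential growth hypothesis, needed to compare orbit displacement in $X$ with a word metric so that the random walk escapes to infinity in $X$ at a definite rate).

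Assuming $\ell>0$, I would invoke the Karlsson--Margulis theorem: for $P$-a.e.\ $\omega$ there is a geodesic ray $\gamma_\omega$ from $\go$ with
\[
\lim_{n\to\infty}\frac1n\,d\bigl(\omega_n\go,\ \gamma_\omega(\ell n)\bigr)=0 .
\]
Since $X$ is \CAT, $\gamma_\omega$ represents a point $\omega_\infty\in\partial_{\rm vis}X$; combining $d(\go,\omega_n\go)\to\infty$ with the sublinear tracking and the comparison/geodesic-retraction properties of \CAT spaces (Lemma~\ref{Lem:CAT}), the angle at $\go$ between $\gamma_\omega$ and $[\go,\omega_n\go]$ tends to $0$, so $\omega_n\go\to\omega_\infty$ in the cone topology. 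Then $\nu(A)=P(\omega_\infty\in A)$ is the hitting measure, hence $(G,\mu)$-stationary, and a.s.\ convergence of $\omega_n\go$ to the single point $\omega_\infty$ makes $(\partial_{\rm vis}X,\nu)$ a $(G,\mu)$-boundary in the sense defined above.

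It remains to prove maximality, i.e.\ that $(\partial_{\rm vis}X,\nu)$ realizes the full Poisson boundary. Here I would verify Kaimanovich's ray criterion with the gauge on $G$ obtained by pulling back $d_X(\go,\cdot)$: this gauge has at most exponential growth by the bounded-exponential-growth hypothesis, and $\mu$ has finite entropy because it is finitely supported. Using $\gamma_\omega$ one selects, for each $n$, an element $h_n(\omega)\in G$ with $h_n(\omega)\go$ a coarsely nearest orbit point to $\gamma_\omega(\ell n)$; this choice is measurable with respect to (a small neighbourhood of) $\omega_\infty$ only, and the ray approximation gives $\tfrac1n\,d_X(\omega_n\go,h_n(\omega)\go)\to0$. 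Since for every $\epsilon>0$ the number of orbit points within $\epsilon n$ of a fixed point is at most $e^{v\epsilon n}$, with $v$ the exponential growth rate, and $v\epsilon\to0$ as $\epsilon\to0$, the conditional entropy of the trajectory given $\omega_\infty$ vanishes, so Kaimanovich's criterion identifies $(\partial_{\rm vis}X,\nu)$ with the Poisson boundary of $(G,\mu)$.

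The main obstacle is the ray-approximation statement itself: Karlsson--Margulis prove it by running the subadditive ergodic theorem against a carefully chosen sequence of Busemann-type functions and exploiting convexity of the \CAT metric (in particular that nearest-point projection to a geodesic is $1$-Lipschitz), and the delicate point is forcing the error to be genuinely $o(n)$ rather than merely bounded along a subsequence. Everything after that --- checking measurability of the ``ray point'' assignment $h_n$ with respect to the boundary and verifying the entropy/gauge hypotheses of Kaimanovich's criterion --- is routine given finite support of $\mu$ and bounded exponential growth.
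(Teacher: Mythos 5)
This theorem is imported from \cite{KarlssonMargulis}; the paper offers no proof of its own, so the only meaningful comparison is with the argument in the cited source, and your outline reproduces that argument faithfully: Kingman's theorem for the drift, the multiplicative ergodic theorem for sublinear ray approximation, convergence of $\omega_n\go$ to $\omega_\infty$ in the cone topology, and Kaimanovich's entropy/ray criterion for maximality of $(\partial_{\rm vis}X,\nu)$ among $(G,\mu)$-boundaries.

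One point deserves correction, because as phrased it would not go through. You justify $\ell>0$ by saying the bounded exponential growth hypothesis lets one ``compare orbit displacement in $X$ with a word metric so that the random walk escapes to infinity in $X$ at a definite rate.'' The orbit map $\mathrm{Cay}(G,S)\to X$ is only coarsely Lipschitz, i.e.\ $d_X(\go,g\go)\le K|g|_S+c$; this bounds the displacement (and hence the drift in $X$) from \emph{above}, and gives no lower bound, since the orbit can be arbitrarily distorted downward. The actual mechanism is the fundamental (Guivarc'h) inequality $h(\mu)\le \ell\cdot v$, where $h(\mu)$ is the asymptotic entropy (finite and strictly positive here because $\mu$ is finitely supported, generating, and $G$ is nonamenable) and $v=\delta_X(G)<\infty$ is the exponential growth rate of the orbit, which is exactly what the bounded-exponential-growth hypothesis supplies; this forces $\ell\ge h(\mu)/v>0$. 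With that repair, the rest of your sketch --- in particular the entropy estimate showing the conditional entropy of the trajectory given $\omega_\infty$ vanishes, using the count of at most $e^{v\epsilon n}$ orbit points within $\epsilon n$ of $\gamma_\omega(\ell n)$ --- is the standard and correct route.
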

Le Bars \cite[Theorem 1.1]{LeBars} proved that if in addition $G\curvearrowright X$ is a rank-1 action, the above measure $\nu$ is the unique $\mu$ stationary measure on $\partial_{\rm vis} X$.
We also recall the following theorem of Kaimanovich.

\begin{lemma}\label{Poissonergodic}\cite{KaiHyp}
The action of any group $G$ on the square of its Poisson boundary with respect to the square of the stationary measure associated to a symmetric random walk preserves the measure class and is ergodic.
\end{lemma}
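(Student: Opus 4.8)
The plan is to transfer ergodicity from the bilateral increment system $(G^{\mathbb Z},\overline{P},U)$ -- already shown above to be ergodic, being conjugate to the Bernoulli shift -- to the diagonal $G$--action on $(B\times B,\nu\times\nu)$, by constructing a boundary map $(G^{\mathbb Z},\overline P)\to(B\times B,\nu\times\nu)$ that intertwines $U$ with the diagonal action up to a translation. Recall that the Poisson boundary $(B,\nu)$ of $(G,\mu)$ is realized by a measurable map $\mathbf{bnd}_{+}\from (G^{\mathbb N},P)\to (B,\nu)$ sending a $\mu$--sample path to its exit point, with $(\mathbf{bnd}_{+})_{*}P=\nu$; likewise the Poisson boundary $(\check B,\check\nu)$ of $(G,\hat\mu)$ is realized by $\mathbf{bnd}_{-}\from(G^{\mathbb N},\hat P)\to(\check B,\check\nu)$. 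The symmetry hypothesis enters here, and essentially only here: $\hat\mu=\mu$, so $(\check B,\check\nu)=(B,\nu)$.

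Using the identification $(G^{\mathbb Z},\overline P)\cong (G^{\mathbb N},P)\otimes(G^{\mathbb N},\hat P)$ noted above, I would assemble these into
\[
\mathbf{bnd}\from(G^{\mathbb Z},\overline P)\longrightarrow(B\times B,\nu\times\nu),
\qquad
\mathbf{bnd}(\omega)=\bigl(\mathbf{bnd}_{+}\bigl((\omega_{n})_{n\geq 0}\bigr),\ \mathbf{bnd}_{-}\bigl((\omega_{-n})_{n\geq 0}\bigr)\bigr).
\]
Since under $\mu^{\mathbb Z}$ the forward increments $(h_{n})_{n\geq 1}$ and the backward increments $(h_{n})_{n\leq 0}$ are independent, the two coordinates of $\mathbf{bnd}$ are $\overline P$--independent, so $\mathbf{bnd}_{*}\overline P=\nu\times\check\nu=\nu\times\nu$. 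The key computation is the effect of $U$: from $(U\omega)_{n}=\omega_{1}^{-1}\omega_{n+1}$, so $(U^{k}\omega)_{n}=\omega_{k}^{-1}\omega_{n+k}$, one sees on letting $n\to+\infty$ and $n\to-\infty$ that the forward and backward exit points of $U\omega$ are the left translates by $\omega_{1}^{-1}$ of those of $\omega$:
\[
\mathbf{bnd}(U\omega)=\omega_{1}^{-1}\cdot\mathbf{bnd}(\omega)
\]
for the diagonal $G$--action on $B\times B$.

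Ergodicity then follows formally: if $A\subset B\times B$ is $G$--invariant, then $\mathbf{bnd}(U\omega)\in A\iff\omega_{1}^{-1}\cdot\mathbf{bnd}(\omega)\in A\iff\mathbf{bnd}(\omega)\in\omega_{1}A=A$, so $\mathbf{bnd}^{-1}(A)$ is $U$--invariant modulo $\overline P$--null sets; ergodicity of $U$ forces $\overline P(\mathbf{bnd}^{-1}(A))\in\{0,1\}$, hence $(\nu\times\nu)(A)\in\{0,1\}$. Quasi-invariance of the measure class is routine: $\mu$--stationarity gives $g_{*}\nu\ll\nu$ for $g\in\supp(\mu)$, symmetry gives the reverse absolute continuity, and since $\supp(\mu)$ generates $G$ this propagates to $g_{*}\nu\sim\nu$ for all $g\in G$, whence $g_{*}(\nu\times\nu)\sim\nu\times\nu$.

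The step needing the most care -- and the only substantive content beyond the ergodicity of $U$, which is already in hand -- is setting up the maps $\mathbf{bnd}_{\pm}$ and their equivariance: one must use that the forward sample-path map genuinely models the abstract Poisson boundary (the Furstenberg--Kaimanovich picture of the Poisson boundary as the space of exit points of $\mu$--paths, which is exactly what makes $\mathbf{bnd}^{-1}$ send $G$--invariant sets to $U$--invariant sets), and that the backward path exits in the Poisson boundary of $(G,\hat\mu)$, so that symmetry of $\mu$ puts both coordinates in the single space $(B,\nu)$. Dropping symmetry, the identical argument yields only that $G$ acts ergodically on $B_{\mu}\times B_{\hat\mu}$ with the product of the two harmonic measures.
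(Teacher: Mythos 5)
The paper offers no proof of this lemma; it is imported verbatim from Kaimanovich \cite{KaiHyp}, so there is no internal argument to compare against. Your proposal is correct and is, in essence, the standard proof of the cited result: it uses exactly the bilateral path space $(G^{\mathbb Z},\overline P)$ and the ergodic transformation $U=T\circ\sigma\circ T^{-1}$ that the paper itself sets up at the start of Section 6. The one step worth writing out carefully is the backward coordinate of the intertwining relation: the backward path of $U\omega$ is $\omega_1^{-1}$ applied to the sequence $(\omega_1,\omega_0,\omega_{-1},\dots)$, i.e.\ to the backward path of $\omega$ with one extra initial term prepended, so besides $G$--equivariance you also need shift-invariance of the exit map $\mathbf{bnd}_-$ (measurability with respect to the stationary $\sigma$--algebra) to conclude $\mathbf{bnd}_-\bigl((U\omega)_-\bigr)=\omega_1^{-1}\cdot\mathbf{bnd}_-(\omega_-)$; both properties hold for the canonical quotient map onto the Poisson boundary, so the argument goes through. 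Your closing remarks are also accurate: symmetry enters only to identify the backward boundary $(\check B,\check\nu)$ with $(B,\nu)$, and the quasi-invariance step implicitly uses that $\supp(\mu)$ generates $G$, which is a standing hypothesis everywhere this lemma is applied in the paper.
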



Let $X$ be a rank-1 \CAT space. For $c>0$, define $A_c$ to be the set of pairs of points of $\partial_{\rm vis} X$ which are the endpoints of a rank-1 biinfinite geodesic that does not bound a flat strip of width $>c$. The set $A_c$ is an $\Isom(X)$-invariant subset of $\partial_{\rm vis} X$ and, for $c$ large enough, it has nonempty interior by Lemma \ref{strip}. Moreover, if $G\curvearrowright X$ is a rank-1 action, $A_c\cap L(G)$ is nonempty for large enough $c$. 
Consequently if $\nu$ is any nonatomic probability measure on $L(G)\subset \partial_{\rm vis} X$ with full support on $L(G)$ we have $\nu \times \nu (A_c)>0$ for large enough $c$. If in addition the $G$ action on $\partial_{\rm vis} X \times \partial_{\rm vis} X$ preserves and is ergodic with respect to the measure class of $\nu \times \nu$, the $\Gamma$ invariance of $A_c$ implies $\nu \times \nu(L(G)\setminus A_c)=0$.

We thus have:
\begin{lemma} \label{stationaryproperties}
Let $X$ be a geodesically complete rank-1 \CAT space and $G\curvearrowright X$ a rank-1 group action. Let $\mu$ be a symmetric probability measure on $G$ whose finite support generates $G$ and $\nu$ the associated stationary measure on $\partial_{\rm vis} X$. Then $\nu\times \nu$ is ergodic with respect to the $G$ action and for some $c>0$ gives full weight to pairs of points defining geodesics which do not bound a flat strip of width greater than $c$.
\end{lemma}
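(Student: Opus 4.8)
The plan is to package the argument already sketched in the paragraph preceding the statement into a clean proof; the only genuinely substantive input is the identification of $(\partial_{\rm vis} X,\nu)$ with the Poisson boundary of $(G,\mu)$, from which double ergodicity of $\nu\times\nu$ follows, while everything else is assembled from \lemref{strip} and the full support of $\nu$. First I would record that $\nu$ is atomless with full support on $L(G)$: by definition $L(G)$ is the unique minimal closed $G$-invariant subset of $\partial_{\rm vis} X$, so $G\curvearrowright L(G)$ is minimal; since a rank-1 action contains two loxodromics with disjoint fixed-point pairs, $G$ contains a nonabelian free subgroup and $L(G)$ is infinite, whence every $G$-orbit in $L(G)$ is dense and therefore infinite. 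As $\nu$ is the stationary measure of a finitely supported random walk on $G$ acting on $X$, it is carried by $L(G)$ and may be viewed as a stationary measure for $G\curvearrowright L(G)$; \lemref{stationarymeasureshavefullsupportandnoatoms} then yields that $\nu$ has no atoms and full support on $L(G)$.

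Next comes the main step: $\nu\times\nu$ is $G$-quasi-invariant and ergodic under the diagonal action. By the Karlsson--Margulis theorem quoted above, $(\partial_{\rm vis} X,\nu)$ is a model for the Poisson boundary of $(G,\mu)$, and by Le Bars' theorem \cite{LeBars} a rank-1 action forces $\nu$ to be the \emph{unique} $\mu$-stationary measure on $\partial_{\rm vis} X$; hence $(\partial_{\rm vis} X,\nu)$ is $G$-equivariantly measurably isomorphic to the Poisson boundary of $(G,\mu)$. Since $\mu$ is symmetric, Kaimanovich's \lemref{Poissonergodic} applies and gives that the diagonal $G$-action on $(\partial_{\rm vis} X\times\partial_{\rm vis} X,\nu\times\nu)$ preserves the measure class and is ergodic.

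It then remains to exhibit a $G$-invariant set of positive $\nu\times\nu$-measure inside the target set and to invoke ergodicity. The set $A_c$ of pairs of boundary points that are endpoints of a biinfinite geodesic bounding no flat strip of width $>c$ is $\Isom(X)$-invariant, hence $G$-invariant. Because $X$ is geodesically complete and carries a rank-1 action, \cite[Proposition~8.14]{Ricks} provides a zero-width geodesic $\tau$ with both endpoints in $L(G)$; a zero-width geodesic bounds no flat strip of any positive width, so for a fixed $R>0$ we may apply \lemref{strip} to $\tau$ and obtain neighborhoods $U,V\subset\partial_{\rm vis} X$ of its two endpoints such that every $\alpha\in U$, $\beta\in V$ are joined by a geodesic not bounding a flat strip of width $2R$. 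Thus $U\times V\subseteq A_{2R}$, and since the endpoints of $\tau$ lie in $L(G)=\supp(\nu)$ we have $\nu(U),\nu(V)>0$, so $\nu\times\nu(A_{2R})\ge\nu(U)\,\nu(V)>0$. Setting $c=2R$, the $G$-invariance of $A_c$ together with the ergodicity established above forces $\nu\times\nu(A_c)=1$, which is precisely the claim that $\nu\times\nu$ gives full weight to pairs of points defining geodesics that do not bound a flat strip of width greater than $c$.

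The only nonroutine point is the Poisson-boundary identification underlying the double ergodicity, everything else being bookkeeping with \lemref{strip} and full support of $\nu$. I would also take care that the standing hypotheses of Karlsson--Margulis are in force here --- nonamenability is automatic from rank-1-ness, semigroup generation follows from symmetry of $\mu$, and the controlled-growth (temperateness) hypothesis is the one that must be imported from the ambient assumptions --- and that the symmetry of $\mu$ is genuinely what licenses the use of \lemref{Poissonergodic}.
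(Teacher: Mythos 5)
Your proposal is correct and follows essentially the same route as the paper, which assembles the lemma from exactly these ingredients in the paragraphs preceding its statement: atomlessness and full support on $L(G)$ via the stationary-measure lemma, double ergodicity via the Karlsson--Margulis Poisson-boundary identification (with Le Bars' uniqueness) and Kaimanovich's theorem, and positivity of $\nu\times\nu(A_c)$ via Ricks' zero-width geodesic with endpoints in $L(G)$ together with \lemref{strip}. Your added bookkeeping (minimality and infiniteness of orbits in $L(G)$, the caveats about temperateness and symmetry) only makes explicit what the paper leaves implicit.
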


By Kingman's Ergodic Theorem and the nonamenability of $G$,  there is an $l=l(\mu)>0$ such that 
\begin{equation}\label{kingman}
l(\mu)=\lim_{n\to \infty}d(\omega_n \go,\go)/n 
\qquad \text{for} \qquad P\text{-a.e. }\omega.
\end{equation}
We refer to $l(\mu)$ as the \emph{drift} of the random walk $(G, \mu)$ with respect to the metric $d$. Karlsson-Margulis proved that for  $P$-a.e.  $\omega$ there is a parametrized unit-speed geodesic $\tau \in SX$ such that 
\begin{equation}\label{sublineartracking}
    d(\tau(ln),\omega_n \go)/n\to 0.
\end{equation} 
Recall that a point  $\zeta \in \partial_{\rm vis} X$ is a geodesic ray starting at $\go$. 

For the rest of the section, we assume that  $X,\nu,\mu$ are as in Lemma \ref{stationaryproperties}. Our goal is to show the following.
\begin{theorem}\label{aesublinear}
For any $\nu$-a.e.  $\zeta \in \partial_{\rm vis} X$, $\zeta$ is sublinearly Morse.
\end{theorem}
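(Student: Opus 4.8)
The strategy is to reduce the statement to Theorem \ref{harmonicarereasonable} (equivalently, to the criterion of Lemma \ref{conditiontobefrequentlycontracting} combined with Theorem \ref{frequentlycontractingimpliessublinearlymorse}), by showing that for $\nu$-a.e.\ $\zeta$ the geodesic ray from $\go$ to $\zeta$ is frequently contracting. As in the proof of Theorem \ref{psarereasonable}, the condition of being frequently contracting is invariant under asymptotic equivalence, so it suffices to work with bi-infinite geodesics. The plan is to pass from the one-sided picture on $\partial_{\rm vis} X$ to the two-sided picture on $SX$ (or on $S_G X$, the zero-width geodesics with endpoints in $L(G)$), using the boundary machinery set up earlier: by Lemma \ref{stationaryproperties}, $\nu \times \nu$ is $G$-ergodic and gives full weight to pairs of endpoints spanning a geodesic that does not bound a flat strip of width $>c$, so $\nu\times\nu$-a.e.\ pair $(\zeta_-,\zeta_+)$ determines a (coarsely unique) bi-infinite geodesic $\tau_{\zeta_-,\zeta_+}$ via Lemma \ref{strip}.

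The key mechanism, following Kaimanovich--Masur \cite{Kaimanovich-Masur} and its quantitative refinement by Baik--Gekhtman--\Ham{}\cite{BGH}, is a \emph{double-ergodicity / recurrence} argument. First I would fix $N$ large enough that $L(G)$ contains the pair of fixed points of an $N$--contracting (rank-1) axis $\gamma_0$ of some hyperbolic $g\in G$; for a large constant $C$, the set $\mathcal{O}\subset \partial_{\rm vis}X\times\partial_{\rm vis}X$ of pairs whose bi-infinite geodesic has a subsegment $C$--fellow-travelling $\gamma_0$ near its origin is open and, by Lemma \ref{stationaryproperties} (full support of $\nu$ on $L(G)$, nonempty interior of $A_c\cap L(G)$), has positive $\nu\times\nu$ measure. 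By Lemma \ref{Poissonergodic} (Kaimanovich's double ergodicity of the Poisson boundary) together with Le Bars' identification of the unique stationary measure with the harmonic measure on $\partial_{\rm vis}X$, the diagonal $\ZZ$-action generated by $U$ (the shift on bilateral paths conjugated through $T$) acts ergodically on $(\partial_{\rm vis}X\times\partial_{\rm vis}X,\nu\times\nu)$. Applying the Birkhoff Ergodic Theorem to the indicator of $\mathcal{O}$ along this action, and translating back through the sublinear tracking estimate \eqref{sublineartracking} (which says $\omega_n\go$ tracks $\tau(ln)$ sublinearly), one sees that for $\nu\times\nu$-a.e.\ pair of endpoints the corresponding geodesic has a definite positive asymptotic density of disjoint times at which it passes within $C$ of a $G$--translate of $\gamma_0$, hence within $C$ of an $N$--contracting geodesic. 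This is precisely the hypothesis of Lemma \ref{conditiontobefrequentlycontracting}: for each $L$ the limit $\lim_{T\to\infty}\thick_\tau(T)/T$ exists and equals a positive constant $m$.

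Having established that $\nu\times\nu$-a.e.\ bi-infinite geodesic is frequently contracting, I would push the conclusion back to $\partial_{\rm vis}X$: disintegrating $\nu\times\nu$ over the first coordinate, for $\nu$-a.e.\ $\zeta_+$ there is a positive-measure set of $\zeta_-$ for which $\tau_{\zeta_-,\zeta_+}$ is frequently contracting, and since frequent contraction of the forward ray depends only on $\zeta_+$, the forward ray to $\zeta_+$ is frequently contracting for $\nu$-a.e.\ $\zeta_+$. Then Theorem \ref{frequentlycontractingimpliessublinearlymorse} (= Theorem \ref{main-frequentlycontractingimpliessublinearlymorse}) gives that this ray is $\kappa$--Morse for a sublinear $\kappa$, i.e.\ sublinearly Morse, which is the assertion. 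The main obstacle I anticipate is the careful bookkeeping in the quantitative Kaimanovich--Masur step: one must control, uniformly along the walk, the interplay between the drift $l(\mu)$, the sublinear tracking error in \eqref{sublineartracking}, and the combinatorial frequency with which the sample path (equivalently the tracking geodesic) enters the "pivot" region near $\gamma_0$ — in particular verifying that the ergodic-average density is genuinely positive and does not get washed out by the sublinear reparametrization. A secondary technical point is justifying that the strip-width bound from Lemma \ref{strip} makes the assignment $(\zeta_-,\zeta_+)\mapsto \tau_{\zeta_-,\zeta_+}$ measurable and coarsely well-defined on a full-measure set, so that "thick time" is an honest measurable function to which Birkhoff applies.
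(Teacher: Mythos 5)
Your proposal follows essentially the same route as the paper: reduce to showing that $\nu$-a.e.\ ray is frequently contracting via a bilateral recurrence argument — positive $\overline{P}$-measure of the set of paths whose geodesic fellow-travels a rank-1 axis near the origin (full support of $\nu$ plus Lemma~\ref{strip}), the Birkhoff ergodic theorem along the shift $U$, and the drift/sublinear-tracking estimates to convert shift-time density into arc-length density along the geodesic — and then conclude with Theorem~\ref{frequentlycontractingimpliessublinearlymorse}. One imprecision worth correcting: the Birkhoff step must be applied to $U$ acting on $(G^{\mathbb{Z}},\overline{P})$, which is ergodic because it is conjugate to the Bernoulli shift, and not to a ``diagonal $\ZZ$-action'' on $(\partial_{\rm vis}X\times\partial_{\rm vis}X,\nu\times\nu)$ — no such measure-preserving action exists, since $U$ moves endpoint pairs by the group elements $\omega_i$ and $\nu\times\nu$ is only quasi-invariant; double ergodicity (Lemma~\ref{Poissonergodic}) enters only through Lemma~\ref{stationaryproperties}, to guarantee that $\nu\times\nu$-a.e.\ pair of endpoints spans a uniformly narrow geodesic.
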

We will prove this by showing that $\nu$-a.e. $\zeta$ is frequently contracting.
In fact, we will prove a stronger statement:
\begin{proposition}\label{longfellowtravel}

Let $g_0\in G$ be a rank-1 element and let $\gamma^0$ be its axis. Then,  for $\nu$-a.e. $\zeta$, there is a $C>0$ such that the following holds. For any $b>a>0$ and $L>0$, there exists an $R_0>0$ such that, for any $R>R_0$, there is a $g \in G$ such that $\zeta([aR,bR])$ contains a subsegment of length $L$ that is $C$-close to $g \cdot \gamma^0$ .
\end{proposition}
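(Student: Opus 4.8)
The plan is to use the double ergodicity of the boundary action together with a quantitative fellow-traveling argument of the Kaimanovich–Masur type. The key mechanism is that for $\nu\times\nu$-a.e.\ pair $(\zeta_-,\zeta_+)$, the geodesic joining them does not bound a flat strip of width $>c$ (Lemma~\ref{stationaryproperties}), and for the particular rank-$1$ element $g_0$ with axis $\gamma^0$, we can find a positive-measure set $\mathcal{O}\subset \partial_{\rm vis}X\times\partial_{\rm vis}X$ of pairs $(\alpha,\beta)$ whose joining geodesic fellow-travels $\gamma^0$ over a definite length: concretely, take a neighbourhood $U\times V$ of the endpoints $(g_0^{-\infty},g_0^{+\infty})$ of $\gamma^0$ small enough that Lemma~\ref{strip} applies, so any geodesic joining a point of $U$ to a point of $V$ stays within bounded distance of $\gamma^0(0)$ and hence (since projections onto $N$-contracting geodesics are coarsely Lipschitz) shares a long subsegment with $g_0^k\gamma^0$ for suitable powers $k$. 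Since $\nu$ has full support on $L(G)$ and no atoms, $\nu\times\nu(\mathcal{O})>0$.

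First I would set up the skew-product / bilateral path space $(G^{\mathbb Z},\overline P)$ with the ergodic transformation $U=T\circ\sigma\circ T^{-1}$ as in Section~\ref{Section6}, and recall that under the boundary maps $\omega\mapsto(\omega_-,\omega_+)\in\partial_{\rm vis}X\times\partial_{\rm vis}X$ the measure pushes to $\check\nu\times\nu$ (with $\check\nu$ the stationary measure of $\hat\mu$), which by Lemma~\ref{Poissonergodic} (applied to the Poisson boundary, which by Karlsson–Margulis and Le Bars is $(\partial_{\rm vis}X,\nu)$) is $G$-ergodic and quasi-invariant. The crucial point is that the geodesic $\tau_\omega$ given by \eqref{sublineartracking} joins $\omega_-$ to $\omega_+$, and applying the shift $U^k$ corresponds to moving the basepoint along the sample path to $\omega_k\go$ and re-rooting; so $U^k\omega$ has boundary pair $(\omega_k^{-1}\omega_-',\omega_k^{-1}\omega_+)$ for the appropriate bilateral extension, which is the pair $(\gamma^0$'s translate$)$-type condition pulled back by $\omega_k^{-1}$. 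By Poincaré recurrence (or the Birkhoff ergodic theorem) applied to the positive-measure set $\{\omega : (\omega_-,\omega_+)\in g\mathcal{O}\text{ for some }g\}$ — more precisely to $\widetilde{\mathcal O}=\{\omega:(\omega_-,\omega_+)\in\mathcal O\}$ under $U$ — the set of times $k$ for which $U^k\omega\in\widetilde{\mathcal O}$ has positive density, hence is infinite and (after rescaling by the drift $l(\mu)$ via \eqref{kingman}) syndetic enough that between consecutive returns the gap, measured along $\tau_\omega$ in the $d$-metric, is controlled. Each such return time $k$ produces, via the fellow-traveling of the re-rooted geodesic with $\gamma^0$, an element $g=\omega_k$ and a subsegment of $\tau_\omega$ of length $\asymp L_0$ (the fixed fellow-traveling length coming from $U,V$) that is $C$-close to $g\cdot\gamma^0$; taking a suitable power of $g_0$ inside, or simply enlarging $L_0$ beforehand, upgrades $L_0$ to any prescribed $L$. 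Finally, translating "positive density of return times $k$" into the statement "every window $[aR,bR]$ contains such a subsegment for $R$ large" is exactly the bridge already built in Lemma~\ref{conditiontobefrequentlycontracting}: positive density of thick times forces, for all large $R$, a thick subinterval inside each $[aR,bR]$.

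The main obstacle I anticipate is making the re-rooting bookkeeping precise: one must verify that the geodesic $\tau_\omega$ from \eqref{sublineartracking} — which a priori only tracks $\omega_n\go$ sublinearly — genuinely passes, at parameter $\approx l(\mu)k$, within bounded distance of the point $\omega_k\go$, so that the local picture near that parameter is governed by the boundary pair of $U^k\omega$; this requires combining sublinear tracking with the non-flat-strip condition to pin down the geodesic. A secondary technical point is quantifying Lemma~\ref{strip}: the fellow-traveling length $L_0$ over which a geodesic joining $U$ to $V$ stays close to $\gamma^0$ must be taken large (shrinking $U,V$ if necessary) and then related to the $N$-contracting constant of $\gamma^0$ via Lemmas~\ref{Lem:BGIT} and~\ref{Lem:Geodesic-Covers} so that "close to $\gamma^0(0)$ for a long time" genuinely yields "$C$-close to a long subsegment of some $g_0^k\gamma^0$." Once these two points are handled, the ergodic-theoretic count and the passage through Lemma~\ref{conditiontobefrequentlycontracting} are routine, and Proposition~\ref{longfellowtravel} — and with it Theorem~\ref{aesublinear} — follows.
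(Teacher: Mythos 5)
Your overall strategy matches the paper's: you build a positive-measure set of bilateral paths whose geodesics fellow-travel $\gamma^0$ near the basepoint (your $\mathcal{O}$ is the paper's $\Lambda'(L,C)$, constructed exactly as you describe from Lemma~\ref{strip} and the full support of $\nu$ on $L(G)$), and you run the Birkhoff ergodic theorem for the shift $U$ on $(G^{\mathbb Z},\overline P)$. But the final step has a genuine gap. You pass from ``the return times $k$ with $U^k\omega\in\widetilde{\mathcal O}$ have positive density'' to ``every window $[aR,bR]$ contains a return for $R$ large.'' Positive density does not control gaps: a subset of $\NN$ of density $\delta$ can have gaps of length up to roughly $(1-\delta)n$ at position $n$, which is \emph{linear} in $n$ and therefore swamps a window of length $(b-a)R$ whenever $b-a$ is small compared to $1-\delta$. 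Since $\overline P(\widetilde{\mathcal O})$ is merely positive (and a priori tiny), and the proposition must hold for all $b>a>0$, the density argument as stated does not close. Your fallback appeal to Lemma~\ref{conditiontobefrequentlycontracting} does not repair this: that lemma requires the exact limit $\lim_{T\to\infty}\thick_\tau(T)/T$ to exist, whereas your construction only yields a $\liminf$ bound on the density of thick times along $\tau_\omega$ (the walk-generated intervals give a subset of the thick set, and the time change $i\mapsto s_i$ is only asymptotically linear), so its hypothesis is not verified.

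The paper's fix is a bootstrapping step you are missing: Lemma~\ref{decaylemma}. One first defines $\Omega(L,C,R)$ to be the set of paths admitting a good fellow-traveling segment \emph{somewhere} within distance $R/2$ of the basepoint, and shows $\overline P(\Omega(L,C,R))>1-f(R)$ with $f(R)\to 0$; this follows from a first application of Birkhoff to the small positive-measure set $\Lambda(L,C)$, since a.e.\ path has some return time and hence lands in $\Omega(L,C,R)$ once $R$ exceeds (a multiple of) that return time. Only then does a second Birkhoff application, now to a set of measure close to $1$, yield the window statement: if $\gamma_\go([an,bn])$ contained no good subsegment, then all $i$ in an interval of length proportional to $(b-a)n/l$ would satisfy $U^i\omega\notin\Omega(L,C,R)$, forcing the ergodic average to be at most $1-\frac{b-a}{2(2a+3b)}$, which contradicts $\overline P(\Omega(L,C,R))>1-f(R)$ once $f(R)$ is smaller than that deficit. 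Your re-rooting concern (that $\tau_\omega$ passes within bounded distance of $\omega_k\go$ at the right parameter) is legitimate and is handled in the paper via the quantities $s_i(\gamma)=d(\omega_i\go,\gamma_\go(0))$, the bound $|s_{i+1}-s_i|\le d$ from finite support, and $s_i/i\to l$; but the essential missing idea is the passage from a fixed positive-measure target to a family of targets of measure tending to one.
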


 For each $\zeta_1,\zeta_2\in \partial_{\rm vis} X$,
 let $\Psi(\zeta_{1},\zeta_{2})$ be the set of unit speed, biinfinite geodesics with endpoints $\zeta_1,\zeta_2\in \partial_{\rm vis} X$. Let $\Psi(\zeta_{1},\zeta_{2},\go)$ be the set of unit speed parametrizations of such geodesics $\gamma$ such that $\gamma(0)$ is at minimal distance from $\go$.
For a bilateral sample path $\omega$ converging to $\omega_-,\omega_+ \in \partial_{\rm vis} X$ write $\Psi(\omega,\go)$ and $\Psi(\omega)$  instead of  
$\Psi(\omega_{-},\omega_{+},\go)$ and $\Psi(\omega_+,\omega_{-})$. 
Similarly for an unparametrized  biinfinite geodesic $\gamma$ we write $\gamma_x$ for the unit speed parametrization with $\gamma_x(0)$ at minimum distance from any $x \in X$.

Proposition \ref{longfellowtravel} will follow from Proposition~\ref{longfellowtravelbilateral}:

\begin{proposition}\label{longfellowtravelbilateral}

Let $g_0\in G$ be a rank-1 element and let $\gamma^0$ be its axis.
Then there is a $C>0$ such that for $\overline{P}$-a.e. biinfinite sample path $\omega$ any parametrization of any biinfinite 
geodesic $\gamma\in \Psi(\omega)$ satisfies the following. For any $\infty>b>a>-\infty$ and $L>0$ there is an $R_0=R_0(a,b,L)>0$ 
such that for any $R>R_0$ there is a $g\in G$ such that $\gamma([aR,bR])$ contains a subsegment of length $L$ that is $C$-close 
to $g \cdot \gamma^0$. 
\end{proposition}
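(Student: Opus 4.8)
\textbf{Proof strategy for Proposition~\ref{longfellowtravelbilateral}.}
The plan is to exploit the double ergodicity of the Poisson boundary (Lemma~\ref{Poissonergodic}) together with a quantitative version of the Kaimanovich--Masur-type argument used in \cite{BGH}. First I would set up the right ambient dynamical system: by the discussion before the proposition, $\overline{P}$-a.e.\ bilateral sample path $\omega$ determines a pair $(\omega_-,\omega_+)\in \partial_{\rm vis}X\times\partial_{\rm vis}X$, and by Lemma~\ref{stationaryproperties} this pair lies in $A_c$ for some fixed $c>0$, so $\omega$ is fellow-traveled by a genuine biinfinite geodesic $\gamma\in\Psi(\omega)$ that does not bound a flat strip of width $>2c$. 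The key object is the transformation $U = T\circ\sigma\circ T^{-1}$ on $(G^{\mathbb Z},\overline P)$, which is invertible, measure-preserving, and ergodic, and which acts on sample paths by $(U^k\omega)_n = \omega_k^{-1}\omega_{n+k}$; in particular $U^k$ re-centers the path so that the new basepoint is $\omega_k\go$ and the geodesic $\gamma$ gets shifted (and re-parametrized) accordingly.

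The core step is to produce a single ``seed'' event of positive measure. Fix the rank-1 element $g_0$ with axis $\gamma^0$. I would define, for suitable constants $C, L_0>0$, the set $E\subset G^{\mathbb Z}$ of bilateral paths $\omega$ such that some geodesic $\gamma\in\Psi(\omega)$ passes within distance $C$ of a specified long subsegment of $\gamma^0$ near $\go$ --- concretely, that $\gamma$ restricted to a time interval of length $L$ around $0$ is $C$-close to $\gamma^0$. Using Lemma~\ref{stationarymeasureshavefullsupportandnoatoms} (the stationary measure $\nu$ has full support and no atoms) together with Lemma~\ref{strip}, one sees that the set of endpoint pairs forcing such a close passage is open and meets the support of $\nu\times\nu$, hence $\overline P(E)>0$. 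Here one uses that $\nu$ being nonatomic and fully supported makes $\nu\times\nu$ give positive mass to any nonempty open set of pairs, and Lemma~\ref{strip} guarantees that endpoint pairs sufficiently close to those of $g_0\cdot\gamma^0$ actually are joined by geodesics uniformly close to $\gamma^0(0)$.

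Now I would run the ergodic theorem. Since $U$ is ergodic and $\overline P(E)>0$, for $\overline P$-a.e.\ $\omega$ the orbit $(U^k\omega)_{k\in\mathbb Z}$ visits $E$ with positive frequency; in particular it visits $E$ infinitely often in both directions, and moreover the gaps between consecutive visits have bounded-away-from-zero asymptotic density, so (using the linear drift $l(\mu)$ from \eqref{kingman} and sublinear tracking \eqref{sublineartracking} to translate ``number of steps $k$'' into ``arclength along $\gamma$'') for any $0<a<b$ and any $R$ large enough there is a visit time $k$ with $\omega_k\go$ lying at $\gamma$-arclength parameter inside $[aR,bR]$, up to a sublinear error absorbed into enlarging $R_0$. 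A visit $U^k\omega\in E$ says precisely that the re-centered geodesic has a length-$L$ subsegment near $\omega_k\go$ that is $C$-close to $\gamma^0$; translating back by the isometry $g=\omega_k$ (recalling $\omega_k\nu_{\go}=\nu_{\omega_k^{-1}\go}$ and how $U^k$ conjugates the picture), this is exactly the statement that $\gamma([aR,bR])$ contains a length-$L$ subsegment that is $C$-close to $g\cdot\gamma^0$. Finally one checks that the geodesic $\gamma\in\Psi(\omega)$ is essentially unique up to the flat-strip ambiguity of width $\le 2c$, so the conclusion holds for \emph{any} parametrization of \emph{any} $\gamma\in\Psi(\omega)$ after enlarging $C$ by $2c$; the proposition for rays (Proposition~\ref{longfellowtravel}) then follows by restricting a bilateral path to its forward half.

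\textbf{Main obstacle.} The delicate point is the bookkeeping that converts ``$U$ visits $E$ with positive density among the integer times $k$'' into ``there is a visit whose footpoint has $\gamma$-arclength parameter in a prescribed window $[aR,bR]$, uniformly in $R$''. This requires (i) that the footpoints $\omega_k\go$ genuinely march along $\gamma$ at asymptotically linear speed $l(\mu)$ with only sublinear transverse deviation --- which is where \eqref{sublineartracking} and the contracting/Morse properties of $\gamma$ enter --- and (ii) a uniform control, again from ergodicity of $U$, that the visit times to $E$ are not too sparse, so that every sufficiently long window of $k$'s contains one. Making (i) and (ii) quantitative and compatible, so that a single $R_0=R_0(a,b,L)$ works, is the technical heart; everything else is soft measure theory plus the $\CAT$ geometry packaged in Lemmas~\ref{Lem:BGIT}--\ref{strip}.
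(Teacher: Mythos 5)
Your proposal assembles the right ingredients (the seed event of positive measure via Lemma~\ref{strip} and full support of the harmonic measure, the ergodic transformation $U$, the Birkhoff theorem, and the drift to convert step count into arclength), and this is indeed the paper's general strategy. But the step you flag as the ``main obstacle'' is a genuine gap, not a technicality: ergodicity of $U$ plus $\overline{P}(E)>0$ gives that the visit times to $E$ have positive asymptotic density, and \emph{positive density does not imply bounded gaps}. A set of density $1/2$ in $\mathbb{Z}$ can have arbitrarily long lacunae, so ``every sufficiently long window of $k$'s contains a visit'' simply does not follow from the ergodic theorem applied to a fixed positive-measure set. Since the window $[aR,bR]$ must contain a visit for \emph{every} large $R$, your argument as written cannot close.

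The paper resolves this with an intermediate device you are missing (Lemma~\ref{decaylemma}): instead of a single seed set, it introduces an $R$-dependent family $\Omega(L,C,R)$ whose $\overline{P}$-measure tends to $1$ as $R\to\infty$, where $U^i\omega\in\Omega(L,C,R)$ certifies a $C$-close, length-$2L$ subsegment somewhere within arclength $R/2$ of $\omega_i\go$ along $\gamma$. If $\gamma_{\go}([an,bn])$ contained no such subsegment, then \emph{all} times $i$ in a subinterval of $[0,N(n)]$ of definite proportion $\tfrac{b-a}{2(2a+3b)}$ would have to miss $\Omega(L,C,R)$, forcing the Birkhoff average down to at most $1-\tfrac{b-a}{2(2a+3b)}$; choosing $R$ with $f(R)<\tfrac{b-a}{6(a+b)}$ makes this contradict $\overline{P}(\Omega(L,C,R))>1-f(R)$. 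So density close to $1$, together with the fact that one visit certifies a whole window of radius $R/2$, substitutes for the false ``bounded gaps'' claim. Your fixed seed set still appears, but only inside the proof of Lemma~\ref{decaylemma}, where one needs merely that almost every orbit visits it at \emph{some} bounded time --- no uniformity over windows is required there. To repair your proof you would need to formulate and prove this quantitative decay statement; without it the argument does not go through.
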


Note that if the assumptions of Proposition \ref{longfellowtravelbilateral} holds for one unit speed parametrization of a geodesic, 
it is also satisfied by any such parametrization (for a different $R_0$). Also note that, unlike in Proposition \ref{longfellowtravelbilateral}, the $C>0$ in Proposition \ref{longfellowtravel} depends on the $\zeta \in \partial_{\rm vis} X$ but this is enough for our purposes.

\begin{proof}[Proof of Proposition \ref{longfellowtravel} assuming Proposition \ref{longfellowtravelbilateral}]
Proposition \ref{longfellowtravelbilateral} implies that there is $C>0$ such that, for $\nu$-a.e. $\zeta \in \partial_{\rm vis} X$, there is a unit speed parametrized biinfinite geodesic $\gamma_\zeta$ with the property that for any $\infty>b>a>0$ and $L>0$ there is an $R_0=R_0(a,b,L)>0$ such that for any $R>R_0$ there is a $g\in G$ such that $\gamma([aR,bR])$ contains a subsegment of length $L$ that is $C$-close to $g \cdot \gamma^0$. Let $\tau_\zeta$ be the ray from $\go$ converging to $\zeta$. Let $D=\sup d(\gamma_{\zeta}(t),\tau_{\zeta}(t))<\infty$. Then $\tau_\zeta$ satisfies the condition of Proposition \ref{longfellowtravelbilateral} with $C+D$ in place of $C$ and $R_0+D$ in place of $R_0$.
\end{proof}
The remainder of this section is devoted to the proof of Proposition~  \ref{longfellowtravelbilateral}. Let $\Omega(L,C,R)$ be the set of sample paths $\omega\in G^{\mathbb{Z}}$ such that, for all $\gamma \in \Psi(\omega)$, we have
$d(\go,\gamma)<R/10$ and there exist  $g\in G$ and  $t\in [-R/2+L,R/2-L]$
such that $\gamma_{\go}(t-L,t+L)$ is $C$-close to $g \cdot \gamma^0$.

\begin{lemma}\label{decaylemma}
There is a $C>0$ such that for all $L>0$ there is a positive function $f$ with 
\[
\lim_{R\to \infty}f(R)=0 
\qquad\text{and}\qquad  
\overline{P}(\Omega(L,C,R))>1-f(R).
\]
\end{lemma}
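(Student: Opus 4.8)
The plan is to prove that $\overline{P}\big(\Omega(L,C,R)\big)\to 1$ as $R\to\infty$; since the statement only asks for a positive $f$ with $f(R)\to 0$, this suffices (take $f(R):=1-\overline P(\Omega(L,C,R))$, adding $e^{-R}$ if one wants strict positivity). The structural point is that the sets $\Omega(L,C,R)$ are \emph{nested and increasing} in $R$: a length-$2L$ subsegment of $\gamma_{\go}$ that is $C$--close to some $g\cdot\gamma^0$ occurs at some finite parameter $t_0$, and whenever $R\ge 2(|t_0|+L)$ and $R>10\,d(\go,\gamma)$ the containments $t_0\in[-R/2+L,R/2-L]$ and $d(\go,\gamma)<R/10$ hold and persist for all larger $R$. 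Hence $R\mapsto\overline P(\Omega(L,C,R))$ is non-decreasing, and it is enough to show that the increasing union $\bigcup_R\Omega(L,C,R)$ has full $\overline P$--measure.

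First I would record the geometric input. Since $g_0$ is rank-$1$, its axis $\gamma^0$ is $N$--contracting for some $N$, does not bound a flat strip of some width $R_1>0$, and has endpoints $g_0^{\pm}\in L(G)$. Using Lemma~\ref{strip} together with the contracting-geodesic facts (Lemmas~\ref{Lem:BGIT} and \ref{Lem:Geodesic-Covers}), I would fix a single constant $C$ depending only on $g_0$ so that for every $L>0$ there are open neighbourhoods $U_L\ni g_0^-$, $V_L\ni g_0^+$ in $\partial_{\rm vis}X$ with the property that any point of $U_L$ is joined to any point of $V_L$ by a geodesic, and every such geodesic has a subsegment of length $\ge 2L$ lying within $C$ of $\gamma^0$. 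The mechanism: Lemma~\ref{strip} (with its width $R_1$) guarantees such geodesics exist and pass within $R_1$ of $\gamma^0(0)$; and if points approach $g_0^-$ (resp.\ $g_0^+$) in the cone topology, their nearest-point projections onto $\gamma^0$ run off to the two ends of $\gamma^0$, so by Lemma~\ref{Lem:Geodesic-Covers} a bounded neighbourhood of the connecting geodesic eventually contains $\gamma^0[-L,L]$. Since any two geodesics in a given $\Psi(\omega)$ stay within a fixed $c$ of one another (Lemma~\ref{stationaryproperties}), after enlarging $C$ once more the quantifier ``for all $\gamma\in\Psi(\omega)$'' in the definition of $\Omega$ becomes no stronger than the statement for one $\gamma$.

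The remaining content is probabilistic and splits into two facts. (i) For $\overline P$--a.e.\ $\omega$, $d(\go,\gamma)<\infty$ for every $\gamma\in\Psi(\omega)$; this follows from the linear escape rate (Equation~\ref{kingman}) of the forward and backward halves of the bilateral path together with the identity $d(\omega_{-m}\go,\omega_n\go)=d(\go,\omega_{-m}^{-1}\omega_n\go)$, where $\omega_{-m}^{-1}\omega_n$ is again a product of i.i.d.\ increments: this forces the Gromov product $(\omega_-\mid\omega_+)_{\go}$ to be a.s.\ finite, hence (again using Lemma~\ref{stationaryproperties}) $d(\go,\gamma)<\infty$ for all $\gamma\in\Psi(\omega)$, and in particular $\overline P\big(d(\go,\gamma)\ge R/10\big)\to 0$. (ii) The event $\mathcal E:=\{\omega:\text{some }\gamma\in\Psi(\omega)\text{ has a length-}2L\text{ subsegment that is }C\text{--close to }g\cdot\gamma^0\text{ for some }g\in G\}$ has full measure. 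Indeed $\mathcal E$ is $U$--invariant: since $\Psi(U^{k}\omega)=\omega_{k}^{-1}\,\Psi(\omega)$ for $k=\pm1$, a subsegment $C$--close to $g\cdot\gamma^0$ is carried by $\omega_{k}^{-1}$ to one $C$--close to $(\omega_{k}^{-1}g)\cdot\gamma^0$; as $U$ is ergodic for $\overline P$, $\overline P(\mathcal E)\in\{0,1\}$. And $\overline P(\mathcal E)>0$: under $\overline P$ the endpoints $\omega_-$ and $\omega_+$ are independent, each with law $\nu$ (symmetry of $\mu$ is used to identify the law of $\omega_-$ with $\nu$), so by full support of $\nu$ on $L(G)$ (Lemma~\ref{stationarymeasureshavefullsupportandnoatoms}) and $g_0^{\pm}\in L(G)$ we get $\overline P(\omega_-\in U_L,\ \omega_+\in V_L)=\nu(U_L)\,\nu(V_L)>0$, and this event lies in $\mathcal E$ (take $g=e$). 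Intersecting the full-measure sets from (i) and (ii) with the nesting from the first paragraph gives $\overline P\big(\bigcup_R\Omega(L,C,R)\big)=1$, hence $\overline P(\Omega(L,C,R))\nearrow 1$, as required.

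The step I expect to be the main obstacle is the geometric one in the second paragraph: upgrading Lemma~\ref{strip} — which only asserts that geodesics between small neighbourhoods of $g_0^{\pm}$ pass near the \emph{single point} $\gamma^0(0)$ — to genuine \emph{long} fellow-travelling with $\gamma^0$, with the constant $C$ uniform in $L$. This is where the contracting property of the rank-$1$ axis $\gamma^0$ and the boundary behaviour of nearest-point projection must be used carefully. Everything downstream is soft: a positive-measure set plus ergodicity of $U$, and the a.s.\ finiteness of $d(\go,\gamma)$ and the independence/full support of the endpoint laws, all of which are essentially already available in the material preceding the lemma.
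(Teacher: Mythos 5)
Your proposal is correct and its skeleton coincides with the paper's: both hinge on producing a positive-$\overline{P}$-measure event on which the connecting geodesic contains a length-$2L$ segment within a uniform $C$ of $\gamma^0$ (via Lemma \ref{strip} and full support of the harmonic measure on $L(G)$), and then use ergodicity of the two-sided shift $U$ plus monotone convergence in $R$. Two differences are worth recording. For the geometric step, which you flag as your main obstacle, the paper's route is much lighter than the contracting-projection argument you sketch: it applies Lemma \ref{strip} separately at each of the finitely many points $\gamma^0(i)$ with $|i|\le 2L+1$ (reparametrizing so that $\gamma^0(i)$ is the origin), obtaining neighbourhoods $V_i^\pm$ of the endpoints of $\gamma^0$ so that any geodesic between them passes within $2C$ of $\gamma^0(i)$, and then intersects these finitely many neighbourhoods; this yields $d(\gamma,\gamma^0(t))\le 2C+1$ for all $|t|\le 2L$ with no need to control nearest-point projections of boundary points onto the axis (your route via Lemma \ref{Lem:Geodesic-Covers} would require extending the projection lemmas, stated for points of $X$, to boundary points). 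For the ergodic step, you apply the zero--one law to the $U$-invariant event $\mathcal{E}$ (a length-$2L$ segment close to \emph{some} translate $g\cdot\gamma^0$), while the paper applies the Birkhoff theorem to the non-invariant event $\Lambda(L,C)$ (fellow-travelling $\gamma^0$ itself near time $0$) and uses $d(\go,\omega_i\go)\le d\,i$ to place the resulting segment inside the window $[-R/2,R/2]$; your version is softer but equally valid for the purely qualitative conclusion $f(R)\to 0$, and your observation that the sets $\Omega(L,C,R)$ increase in $R$ correctly replaces the paper's counting of visit times. The only genuine incompleteness in your write-up is that the geometric claim is left as a sketch whose proposed mechanism is harder than necessary; the paper's finite-intersection trick closes it in a few lines.
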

We first prove of Proposition \ref{longfellowtravelbilateral} assuming Lemma \ref{decaylemma} and will prove Lemma \ref{decaylemma} afterwards. 
\begin{proof}[Proof of Proposition \ref{longfellowtravelbilateral} assuming Lemma \ref{decaylemma}]
Assume, without loss of generality, that $a>0$ (this is possible since $[a,b]$ has a subsegment contained in either $(0,\infty)$ or $(-\infty,0)$ and in the latter case we can reverse the orientstion of the geodesic).   
Let $\Omega_{0}\subset 
G^\mathbb{Z}$ denote the set of 
all $\omega$ such that 
\[
d(\omega_{\pm i}\go,\go)/i\to l \qquad\text{and}\qquad 
\omega_{\pm i} \go \to \zeta_\pm \in \partial_{\rm vis} X
\]
with $(\zeta_-,\zeta_+)$ having width at most $c$. By Lemma~\ref{stationaryproperties} and Equation~\ref{kingman},  $\overline{P}(\Omega_0)=1$. Consider $\omega\in \Omega_0$.
Choose $R>0$ large enough so that $$1-f(R)>(b-a)/(10a+10b).$$
Assume $U^{i}\omega \in \Omega(L,C,R)$ for some $i \in \ZZ$. This is equivalent 
to saying, for all $\gamma \in \Psi(\omega)$, $d(\omega_{i}\go,\gamma)<R/10$
and there exists $t \in (-R/2+L,R/2-L)$ and $g \in G$ such that 
\[
\gamma_{\omega_{i}\go}(t-L,t+L) \ \text{is $C$--close to  $g \cdot \gamma^0$}.
\]
Therefore, we can choose time $t_i$ with $|t_{i}-d(\omega_{i}\go,\gamma_{\go}(0))|<R/10$
and $g_i\in G$ such that, for all $\gamma \in \Psi(\omega)$, 
\[
\gamma_{\go}(t_{i}-L,t_{i}+L)\ \text{ is $C$--close to $g_i \cdot \gamma^0$}. 
\]
Let $s_{i}(\gamma)=d(\omega_{i}\go,\gamma_{\go}(0))$. Let 
\[
d=\sup \big\{d(\go,g\go)\mid g\in {\rm supp}(\mu)\big\}.
\]
Note since $d(\omega_{i}\go, \omega_{i+1}\go)\leq d$ for all $i$, for every $t>d(\go,\gamma)$ 
there is some $i(t)$ with \[|t-s_{i(t)}(\gamma)|<d.\]
Hence, for large enough (depending on $\omega$) $n$, for all $\gamma\in \Psi(\omega)$, if there is an $i$ with $$U^{i}\omega \in \Omega(L,C,R)$$ and 
\begin{equation} \label{Eq:si}
(2a+b)n/3\leq s_{i}(\gamma)\leq (a+2b)n/3
\end{equation} 
then $\gamma_{\go}([an,bn])$ has a connected segment of length $L$ that is $C$--close to 
$g \cdot \gamma^0$, for some $g\in G$. Moreover, since $s_{i}(\gamma)/i \to l$, 
for large enough $n$, we have, \eqref{Eq:si} holds whenever 
\begin{equation} \label{Eq:i}
\frac{(3a+2b)n}{5l}\leq i\leq\frac{(2a+3b)n}{5l}.
\end{equation} 

Hence, unless $\gamma_{\go}([an,bn])$ has a connected segment of length $L$ that is 
$C$--close to $g \cdot \gamma^0$ for some $g\in G$, we have 
$$U^{i}\omega \notin \Omega(L,C,R)$$ 
for any $i$ as in \eqref{Eq:i}.
This implies that if $N=N(n)$ is the smallest integer less than $\frac{(2a+3b)n}{5l}$ we have 
$$\frac{|\{i\in [0,N-1]\mid U^{i}\omega \in \Omega(L,C,R)\}|}{N}\leq 1-\frac{b-a}{2(2a+3b)}.$$
Consequently, if for infinitely many $n$  $\gamma_{\go}([an,bn])$ has no connected segment of length $L$ that is $C$--close to $g \cdot \gamma^0$ for some $g\in G$, we have   
$$\liminf_{N\to \infty}\frac{|\{i\in [0,N-1]\mid U^{i}\omega \in \Omega(L,C,R)\}|}{N}\leq 1-\frac{b-a}{2(2a+3b)}.$$
On the other hand, by the Birkhoff ergodic theorem, for $\overline{P}$-a.e $\omega$, we have
\begin{align*}
\lim_{N\to \infty}\frac{|\{i\in [0,N-1]\mid U^{i}\omega \in \Omega(L,C,R)\}|}{N}   &=\overline{P}(\Omega(L,C,R))\\
&>1-\frac{(b-a)}{(6a+6b)} > 1-f(R) \qedhere 
\end{align*}
\end{proof}

Finally, we prove Lemma \ref{decaylemma}.
\begin{proof}[Proof of  Lemma \ref{decaylemma}]
Let $C$ be such that for $\overline{P}$-a.e. $\omega$ the points $\omega_\pm$ are connected by a geodesic of width at most $C$. This means that for $\overline{P}$ a.e. $\omega$,  $\Psi(\omega)$ is nonempty and there is an $R(\omega)\in \mathbb{N}$ such that for all $\gamma \in \Psi(\omega)$, we have
$d(\go,\gamma)<R(\omega).$  Thus, the $\overline{P}$ measure of $\omega\in G^{\mathbb{Z}}$ such that
$d(o,[\omega_{-},\omega_{+}])<R/10$ converges to 1 with $R$.

It now suffices to show that there is a $C>0$ such that for each $L>0$ the $\overline{P}$ measure of the set of $\omega$ such that for each $\gamma\in \Psi(\omega)$ the segment $\gamma_{\go}(-R/2,R/2)$ contains a length $L$ subsegment $C$ close to $g \cdot \gamma_0$ for some $g\in G$ converges to $1$ as $R\to \infty$.
Let $\Lambda(L,C)$ be the set of biinfinite sample paths $\omega$ such that $d(\gamma_{\go}(t),\gamma^0 (t))<C$ for all $\gamma\in \Psi(\omega)$ and $|t|\leq L$.

\begin{claim} 
There is a $C>0$ such that $\overline{P}(\Lambda(L,C))>0$ for all $L$.
\end{claim}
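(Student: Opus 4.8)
Write $\gamma^0$ for the axis of $g_0$, parametrized at unit speed so that $\gamma^0(0)$ is the point of $\gamma^0$ nearest $\go$ (the convention of the excerpt), and put $\rho_0=d(\go,\gamma^0)$. Being a rank-1 axis in a \CAT space, $\gamma^0$ is $N$--contracting for some $N$, and since it bounds no flat half-plane it bounds flat strips of widths at most some $w_0<\infty$; by the flat strip theorem any biinfinite geodesic with the same ordered pair of endpoints at infinity as $\gamma^0$ lies within Hausdorff distance $w_0$ of it. Fix $R_1>w_0$ and let $U_1\ni\gamma^0(-\infty)$, $V_1\ni\gamma^0(+\infty)$ be the visual neighborhoods furnished by Lemma~\ref{strip}, so that every $(\alpha,\beta)\in U_1\times V_1$ is joined by a geodesic and every such geodesic meets the $R_1$--ball about $\gamma^0(0)$. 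Finally let $C_0$ be a constant depending only on $w_0$ and $\rho_0$, large enough for the estimate below; crucially it will \emph{not} depend on $L$. The plan is to produce, for each $L$, visual neighborhoods $U_L\subseteq U_1$ of $\gamma^0(-\infty)$ and $V_L\subseteq V_1$ of $\gamma^0(+\infty)$ with $\{\omega:\omega_-\in U_L,\ \omega_+\in V_L\}\subseteq\Lambda(L,C_0)$, and then to observe that this event has positive $\overline P$--measure because the hitting measures have full support on $L(G)$.

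\textbf{Step 1: a uniform fellow-travelling estimate.}
I claim that for each $L>0$ there are neighborhoods $U_L\subseteq U_1$ of $\gamma^0(-\infty)$ and $V_L\subseteq V_1$ of $\gamma^0(+\infty)$ such that for every $(\alpha,\beta)\in U_L\times V_L$ and every geodesic $\gamma$ joining $\alpha$ to $\beta$, its nearest-point parametrization $\gamma_\go$ (oriented toward $\beta$) satisfies $d(\gamma_\go(t),\gamma^0(t))<C_0$ for all $|t|\le L$. If not, there are $\alpha_n\to\gamma^0(-\infty)$ in $U_1$, $\beta_n\to\gamma^0(+\infty)$ in $V_1$ and geodesics $\gamma_n$ joining them with $\sup_{|t|\le L}d(\gamma_{n,\go}(t),\gamma^0(t))\ge C_0$. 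By Lemma~\ref{strip} each $\gamma_n$ passes within $R_1$ of $\gamma^0(0)$, so $d(\go,\gamma_n)\le\rho_0+R_1$ and the $\gamma_{n,\go}$ are locally uniformly bounded; Arzel\`a--Ascoli and a diagonal argument give a subsequence converging locally uniformly to a biinfinite geodesic $\gamma_\infty$. The positive sub-ray $\gamma_{n,\go}|_{[0,\infty)}$ is the unique \CAT ray from $\gamma_{n,\go}(0)$ to $\beta_n$; since $\gamma_{n,\go}(0)\to\gamma_{\infty,\go}(0)$ and $\beta_n\to\gamma^0(+\infty)$, continuity of the geodesic ray map $X\times\partial_{\rm vis}X\to\{\text{geodesic rays}\}$ in \CAT spaces gives $\gamma_\infty(+\infty)=\gamma^0(+\infty)$, and likewise $\gamma_\infty(-\infty)=\gamma^0(-\infty)$. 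Hence $\gamma_\infty$ lies within Hausdorff distance $w_0$ of $\gamma^0$; since $\go$ is within $\rho_0$ of both, comparing the two feet of perpendiculars from $\go$ via the \CAT (Pythagorean) inequality bounds the parameter shift between $\gamma_{\infty,\go}$ and $\gamma^0$ in terms of $\rho_0,w_0$ only, so $d(\gamma_{\infty,\go}(t),\gamma^0(t))<C_0$ for all $t$. Local uniform convergence on $[-L,L]$ then forces $\sup_{|t|\le L}d(\gamma_{n,\go}(t),\gamma^0(t))<C_0$ for large $n$, a contradiction. (Alternatively Step~1 can be run from Lemmas~\ref{Lem:BGIT} and~\ref{Lem:Geodesic-Covers}: push $g_0^{\pm k}\go$ far out along the two ends of $\gamma^0$ so their $\gamma^0$--projections are more than $2L+2D_2$ apart, then sandwich the middle of $\gamma$ against the corresponding segment of $\gamma^0$ and bound the excursions.)

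\textbf{Step 2: positivity and conclusion.}
Since $g_0\in G$, its fixed points $\gamma^0(\pm\infty)$ lie in the limit set $L(G)$, so $U_L\cap L(G)$ and $V_L\cap L(G)$ are nonempty open subsets of $L(G)$. The action $G\curvearrowright L(G)$ is minimal on a compact space with infinite orbits (as $G$ is nonelementary), so by Lemma~\ref{stationarymeasureshavefullsupportandnoatoms} both the $(G,\mu)$-- and the $(G,\hat\mu)$--stationary measures have full support on $L(G)$; by symmetry of $\mu$ both equal $\nu$, whence $\nu(U_L)>0$ and $\nu(V_L)>0$. Under the isomorphism $(G^{\ZZ},\overline P)\cong(G^{\NN},P)\otimes(G^{\NN},\hat P)$, the endpoint $\omega_+$ is a function of the first coordinate with law $\nu$ and $\omega_-$ a function of the second with law $\nu$, and the two are independent, so
\[
\overline P\bigl(\{\omega:\omega_-\in U_L,\ \omega_+\in V_L\}\bigr)=\nu(U_L)\,\nu(V_L)>0.
\]
By Step~1 every such $\omega$ lies in $\Lambda(L,C_0)$, hence $\overline P(\Lambda(L,C_0))>0$ for all $L$, which is the Claim with $C=C_0$. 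The only genuinely delicate point is Step~1 — in particular, verifying that the limiting biinfinite geodesic inherits exactly the two endpoints of $\gamma^0$ and that $C_0$ may be chosen independently of $L$; everything else is bookkeeping with Lemma~\ref{strip}, Lemma~\ref{stationarymeasureshavefullsupportandnoatoms}, and the product structure of $\overline P$.
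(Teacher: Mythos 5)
Your proof is correct, and Step 2 (full support of the stationary measure on $L(G)$ plus the product structure of $\overline{P}$) is exactly what the paper does. Where you genuinely diverge is in the geometric Step 1. The paper gets the fellow-travelling neighborhoods by applying Lemma~\ref{strip} separately at each point $\gamma^0(i)$ for $i$ in the finite range $|i|\le 1+\lfloor 2L\rfloor$ (each application giving neighborhoods $V_i^\pm$ so that geodesics with endpoints therein pass within $2C$ of $\gamma^0(i)$) and then intersecting the finitely many $V_i^\pm$; this is quantitative and avoids any limiting argument, but it only directly controls the set-distance $d(\gamma,\gamma^0(t))$ and leaves the conversion to the parametrized bound $d(\gamma_\go(t),\gamma^0(t))$ required by the definition of $\Lambda(L,C)$ somewhat implicit (which is why the paper works on the window $|t|\le 2L$ to conclude on $|t|\le L$). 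Your Arzel\`a--Ascoli contradiction argument instead passes to a limiting biinfinite geodesic, identifies its endpoints with those of $\gamma^0$, invokes the flat strip theorem to pin it within $w_0$ of $\gamma^0$, and then explicitly bounds the parameter shift between the two nearest-point parametrizations in terms of $\rho_0$ and $w_0$ alone --- this handles the parametrization matching more carefully and makes the $L$-independence of $C_0$ transparent, at the cost of a soft compactness step (including the small but necessary observation, which you should make explicit, that the locally uniform limit of the nearest-point parametrizations $\gamma_{n,\go}$ is again a nearest-point parametrization of the limit geodesic). Both routes are valid; yours is less elementary but tighter on the point the paper glosses over, while the paper's is shorter and purely local.
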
 
\begin{proof}
Let $C$ be large enough so that the periodic rank-1 geodesic $\gamma^0$ passes within $C$ of $\go$ and has width less than $C$. Let $\gamma_-, \gamma_+\in \partial_{\rm vis} X$ be its limit points and parametrize $\gamma^0$ such that $\gamma_{0}(0)$ is at minimal distance to $\go$. By Lemma \ref{strip}  for each $i\in \mathbb{N}$ there are neighborhoods $V^{\pm}_i$ of $\gamma_\pm$ in $\partial_{\rm vis} X$ such that any pair in $V^{-}_i\times V^+_i$ can be connected by a geodesic of width less than $2C$ which passes within $2C$ of $\gamma^0(i)$. Letting 
\[
V^\pm=\bigcap^{1+\lfloor 2L\rfloor}_{i=-1-\lfloor 2L\rfloor} V_i^\pm 
\]
we have that any pair in $V^{-}\times V^+$ can be connected by a geodesic of width less than $2C$ which passes within $2C+1$ of $\gamma^0(t)$ for any $|t|\leq 2L$.
Thus, for any geodesic $\gamma$ with endpoints in $V^{-}\times V^+$ and $|t|\leq 2L$ we have $d(\gamma,\gamma^0(t))\leq 2C+1$.

Let $\Lambda'(L,C)$ be the set of all sample paths $\omega$ with $\omega^\pm\in V^\pm$.  
By definition,
\[
\Lambda'(L,C)\subset \Lambda(L,2C+1).
\]
Since the $V^\pm$ are open neighborhoods of $\gamma^{0}_{\pm}\in L(G)$ in $\partial_{\rm vis} X$ and the harmonic measure $\nu$ has 
full support on the limit set $L(G) \subset \partial_{\rm vis} X$,
we have $\nu(V^\pm)>0$, hence $\overline{P}(\Lambda'(L,C))>0$ and thus $\overline{P}(\Lambda(L,2C+1))>0$. This completes the proof 
with $2C+1$ in place of $C$.
\end{proof}
Let $C>0$ satisfy the conditions of the claim and assume wothout loss of generality that $R>L>2C$.
Note, $U^{i}\omega \in \Lambda(L,C)$ if and only if $d(\gamma_{\omega_i\go}(t),\omega_{i}\gamma^0 (t))<C$ for all $\gamma\in \Psi(\omega)$.
Let $d=\max_{g\in G,\mu(g)>0} d(g\go,\go)$. This is finite since $\mu$ is assumed to be finitely supported.
Note, we always have $d(\go,\omega_{i}\go)\leq di$ and hence if $$U^{i}\omega \in \Lambda(L,C)$$ for some $i$ with
$$0\leq i\leq \frac{R/2-L-2C}{2d}$$ then for all $\gamma\in \Psi(\omega)$, $\gamma_{\go}([-R/2,R/2])$ contains a length $L$  segment which is $C$ close to $g \cdot \gamma^0$ for some $g\in G$. We want to show that the measure of $\omega$ in the complementary set converges to $0$ as $R\to \infty$. By the Birkhoff ergodic theorem, for $\overline{P}$ almost every $\omega$, 
\[
\lim_{N\to \infty} \frac{|i\in [0,N]: U^i \omega\in \Lambda(L,C)|}{N}=\overline{P}(\Lambda(L,C))>0.
\]
In particular, for $\overline{P}$ almost every $\omega$, there is some $i$ with $U^i \omega \in \Lambda(L,C)$.
Therefore, the $\overline{P}$ measure of sample paths $\omega$ such that $U^{i}\omega \notin \Lambda(L,C)$ for all $i$ with 
$$0\leq i\leq \frac{R-L-2C}{2d}$$ converges to $0$ as $R\to \infty$ completing the proof.
\end{proof}

\section{Genericity of sublinearly Morse geodesics in Teichm\"uller space}
In this section we consider the \Teich space $\Tei(S)$ of a closed surface $S$ of genus 
$g\geq 2$ equipped with the Teichm\"uller metric $d=d_T$. Let $\MF$ be space of measured
foliations on $S$ and let $\mathcal{PMF}$ be the set of projective classes of elements of
$\mathcal {MF}$. There is a natural 
compactification of $\Tei(S)$ by $\PMF$ called the \emph{Thurston compactification} of $\Tei(S)$
The space $\MF$ can be locally parametrized
by cones in $\R^{6g-6}$. This defines a natural Lebesgue class of measures called the 
Thurston measure. This also defines a Lebesgue class of measures on $\PMF$.
(see \cite{FLP, Penner} for definitions and discussion).

Any \Teich geodesic ray can be described by a one parameter family of quadratic differentials.
The real and imaginary parts of a quadratic differential define two measures foliations
that are called the horizontal and the vertical foliation associated to the quadratic differential.  
The \Teich geodesic flow acts by scaling the horizontal foliation up and scaling the vertical 
foliation down (see \cite{Hubbard} for backgroun information on \Teich space). 

A measured foliation is called \emph{arational} if it does not contain any simple closed curve.  
It is called \emph{uniquely ergodic} if its underlying topological foliation supports a unique transverse
measure up to scaling. We also call its projective class arational or uniquely ergodic. 
Let $\mathcal{UE}\subset \mathcal{PMF}$ be the set of arational, uniquely ergodic projective measured
foliations. The $\mathcal{US}$ has full measure with respect to the Thurston measure
\cite{MasurUE}. 

A \Teich geodesic with vertical foliation in $\mathcal{UE}$ converges to its 
projective class in the Thurston compactification \cite{MasurUE} (although not always 
\cite{Lenzhen, LLR}). This allows us to consider the 
Thurston compactification as an analogue of the visual compactification of $\CAT$ spaces
(up to a measure zero set). We now proceed with the proof of Theorem \ref{fullmeasuresublinearteich}
which is really two separate statements.
The proofs are nearly identical to those in Section~\ref{Section5} and Section~\ref{Section6}. Hence, we only highlight
the differences and skip repeating the identical parts of the arguments. 

\subsection{Thurston measures on $\mathcal{PML}$}

The first measure we consider is the so-called normalized Thurston measure on $\mathcal{PMF}$, which can be considered as an analogue of the Patterson-Sullivan measure and which we now define.
There is a natural symplectic form on $MF$ which induces a measure called the Thurston measure $m^{Th}$.
For $\eta\in \mathcal{MF}$ define $[\eta]\in \mathcal{PMF}$ to be its projective class and for any basepoint $\go\in \Tei(S)$ define a \emph{normalized Thurston measure} $\nu^{Th}_{\go}$ on $\mathcal{PMF}$ by $\nu^{Th}_{\go}(A)=m(\eta\in \mathcal{MF}: [\eta]\in A, Ext_{\go}\eta\leq 1)$ where $Ext_x\eta$ denotes the extremal length of the measured foliation $\eta$ with respect to the conformal structure defimed by $x$. 

We will prove the following.
\begin{theorem}\label{Thurstonfullmeasuresublinearteich}
Let $S$ be a closed surface of genus at least $2$ and let $\Tei(S)$ be the \Teich space of $X$ with the \Teich metric. Let $\PMF$ be Thurston's boundary of \Teich space consisting of projective measured foliations.  Let $\nu$ be a measure on $\PMF$ which is a normalized Thurston measure. Then $\nu$ gives full measure to foliations 
associated to sublinearly Morse geodesics rays.
\end{theorem}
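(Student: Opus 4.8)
The plan is to run the proof of Theorem~\ref{psarereasonable} in Section~\ref{Section5} almost verbatim, with the unit tangent bundle $SX$ and its Bowen--Margulis measure replaced by the bundle $Q^1(S)$ of unit-area quadratic differentials over $\Tei(S)$ equipped with the Masur--Veech measure $\tilde m$, and the Teichm\"uller geodesic flow $g_t$ in the role of the geodesic flow. By Masur \cite{MasurUE} the normalized Thurston measure $\nu^{Th}_\go$ gives full mass to uniquely ergodic arational foliations, so for $\nu^{Th}_\go$-a.e.\ $\xi\in\PMF$ there is a well-defined Teichm\"uller geodesic ray $\tau_\xi$ from $\go$ with vertical foliation $\xi$, and $\tau_\xi$ converges to $\xi$ in the Thurston compactification; the theorem asserts that $\nu^{Th}_\go$-a.e.\ $\xi$ is the vertical foliation of a sublinearly Morse ray, and it suffices to show $\tau_\xi$ itself is such a ray. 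By Theorem~\ref{main-frequentlycontractingimpliessublinearlymorse} it is enough to prove that $\tau_\xi$ is frequently contracting for $\nu^{Th}_\go$-a.e.\ $\xi$.

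First I would reduce this to a statement about $\tilde m$-a.e.\ bi-infinite Teichm\"uller geodesic, exactly as the Patterson--Sullivan statement was reduced to a statement about the Bowen--Margulis measure. The condition of Lemma~\ref{conditiontobefrequentlycontracting} depends only on the asymptotic class of a ray, and two Teichm\"uller rays with the same uniquely ergodic vertical foliation are asymptotic (Masur). Moreover, the Hubbard--Masur parametrization—sending a quadratic differential to its horizontal and vertical foliations together with a Busemann-type time coordinate—identifies $\tilde m$ with a constant multiple of $\nu^{Th}_\go\times\nu^{Th}_\go\times dt$; in particular $\nu^{Th}_\go$ is the law of the forward endpoint of a $\tilde m$-generic bi-infinite geodesic. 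Hence it is enough to show that for $\tilde m$-a.e.\ $q\in Q^1(S)$ the forward ray of the bi-infinite geodesic carried by $q$ satisfies the condition of Lemma~\ref{conditiontobefrequentlycontracting}.

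Next I would run the ergodic averaging. The Masur--Veech measure descends to a finite measure $m$ on the moduli space $Q^1(S)/\MCG(S)$ that is ergodic for $g_t$ (Masur, Veech), and $\tilde m$ lies in the Lebesgue measure class on $Q^1(S)$, hence has full support. For constants $L,N,C>0$ let $W_{L,N,C}\subset Q^1(S)$ be the $\MCG(S)$-invariant Borel set of those $q$ for which the Teichm\"uller geodesic segment $[\pi_{\rm fp}(g_{-L}q),\pi_{\rm fp}(g_{L}q)]$ lies in a $C$-neighbourhood of an $N$-contracting geodesic (Definition~\ref{Def:Contracting}). Applying the Birkhoff ergodic theorem to the indicator of $W_{L,N,C}/\MCG(S)$ gives, for $\tilde m$-a.e.\ $q$,
\[
\lim_{T\to\infty}\frac{\bigl|\{t\in[0,T]: g_t q \in W_{L,N,C}\}\bigr|}{T}=m\bigl(W_{L,N,C}/\MCG(S)\bigr),
\]
and the left-hand side is precisely $\thick_\tau(T)/T$ for the chosen $L,N,C$ and $\tau$ the forward ray of $q$. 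When this limit is positive the ray $\tau$ satisfies the hypothesis of Lemma~\ref{conditiontobefrequentlycontracting} (a countable intersection over $L$, together with the monotonicity of $W_{L,N,C}$ in $L$, handles all $L$ at once), so the whole theorem reduces to finding $N,C>0$ with $\tilde m(W_{L,N,C})>0$ for every $L$. For this I would fix a pseudo-Anosov $\phi\in\MCG(S)$ with axis $q_0$: a pseudo-Anosov axis stays in a fixed thick part of $\Tei(S)$, and a Teichm\"uller geodesic of bounded geometry is strongly contracting, so $q_0$ is $N$-contracting for some $N$ (Minsky). Then for any $C>0$ the set $W_{q_0}$ of $q$ with $d_T(\pi_{\rm fp}(g_t q),q_0)<C$ for all $|t|\le L$ is open, contains $q_0$, and lies in $W_{L,N,C}$; full support of $\tilde m$ gives $\tilde m(W_{q_0})>0$, hence $\tilde m(W_{L,N,C})>0$.

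The purely ergodic-theoretic steps are then identical to Section~\ref{Section5}, so the hard part is the Teichm\"uller-geometric bookkeeping that connects the boundary measure $\nu^{Th}_\go$ with the flow-invariant measure $\tilde m$: one must verify that $\nu^{Th}_\go$-a.e.\ vertical foliation is uniquely ergodic (so $\tau_\xi$ is unambiguous and converges to $\xi$), that rays sharing a uniquely ergodic vertical foliation are asymptotic, and that the Masur--Veech measure disintegrates over $\nu^{Th}_\go\times\nu^{Th}_\go$ as claimed. All of this is classical (Hubbard--Masur, Masur \cite{MasurUE}), as is the fact that pseudo-Anosov axes are strongly contracting in the sense of Definition~\ref{Def:Contracting} (Minsky); the only work is to assemble these inputs so that the Section~\ref{Section5} argument applies unchanged.
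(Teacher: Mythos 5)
Your proposal is correct and follows essentially the same route as the paper: the paper's proof of this theorem is precisely to transplant the Section~\ref{Section5} argument to $Q^1(S)$ with the Masur--Veech measure in place of the Bowen--Margulis measure, relying on Masur's results that uniquely ergodic foliations have full Thurston measure and that rays with the same uniquely ergodic vertical foliation are asymptotic, together with Minsky's theorem that pseudo-Anosov axes are strongly contracting. You have simply spelled out the details that the paper leaves implicit, and your assembly of the ingredients (Birkhoff averaging on $W_{L,N,C}$, full support giving positive measure to a neighbourhood of a pseudo-Anosov axis) matches the paper's intended argument.
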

As in the proof of Theorem \ref{psarereasonable}, to prove this result we will need to prove a bilateral analogue, using an appropriate geodesic flow. 

 Any pair of distinct elements of $\mathcal{UE}$ determines a unique
Teichm\"uller geodesic with corresponding vertical and horizontal measured foliations \cite{Hubbard-Masur}.

Let $\nu$ be a normalized Thurston measure on $\mathcal{PML}$. The measure $\nu\times \nu$ gives full measure to pairs of distinct elements of $\mathcal{UE}$, and thus after taking the product with the arc-length (with respect to Teichnueller metric) normalized Lebesgue measure $L$ can be considered a measure on the space $Q^{1}$ of unit area quadratic differentials, which can be seen as the (co)tangent bundle to Teichm\"uller space. We can find a $G$-invariant and Teichm\"uller geodesic flow invariant Radon measure $\tilde{m}$ on $Q^1$ in the measure class of $\nu \times \nu \times dL$. This measure $\tilde{m}$ projects to a finite Teichm\"uller geodesic flow invariant and ergodic measure $m$ on $Q^1/\MCG(S)$, called the Masur-Veech measure. Consequently the $Mod(S)$ action on $\mathcal{PML}$ with the Thurston measure is ergodic, see \cite{ABEM} for details.
The proof now proceeds exactly as for the Patterson-Sullivan measure in the \CAT setting (see Section 5), using the following two facts:
\begin{itemize} 
\item Any two geodesic rays with the same vertical projective measured foliation in $\mathcal{UE}$ are strongly asymptotic, i.e. the distance between them converges to zero. \cite[Theorem 2]{MasurUE}. 
\item The axis in $\Tei(S)$ of any Pseudo-Anosov element is strongly contracting \cite{Minsky}.
\end{itemize}

\subsection{Stationary measures on $\mathcal{PML}$}
We now consider stationary measures for random walks on the mapping class group. 
A subgroup of $\MCG(S)$ is called non-elementary if it contains two pseudo-Anosov elements with disjoint fixed point sets in $\mathcal{PML}$.
A measure $\mu$ on $\MCG(S)$ is said to be non-elementary if the semigroup generated by its support is a non-elementary subgroup. We prove: 

\begin{theorem}\label{harmonicfullmeasuresublinearteich}
Let $S$ be a closed surface of genus at least $2$ and let $\Tei(S)$ be the \Teich space of $X$ with the \Teich metric. Let $\PMF$ be Thurston's boundary of \Teich space consisting of projective measured foliations.  Let $\nu$ be a measure on $\PMF$ which is the stationary measure associated to a finitely supported probability measure $\mu$ on the mapping class group $\MCG(S)$ such that the semigroup generated by the support of $\mu$ is a group containing at least two independent pseudo-Anosov elements. Then $\nu$ gives full measure to foliations 
associated to sublinearly Morse geodesics rays.
\end{theorem}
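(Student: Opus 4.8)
The plan is to mirror the structure of the proof of Theorem~\ref{harmonicarereasonable} (the stationary-measure result in the \CAT setting, Sections~\ref{Section6}), transporting every step to \Teich space. As in that case, Theorem~\ref{harmonicfullmeasuresublinearteich} will follow from Theorem~\ref{frequentlycontractingimpliessublinearlymorse} once we show that $\nu$-a.e.\ projective measured foliation $\zeta\in\PMF$ is the vertical foliation of a frequently contracting \Teich geodesic ray. First I would record the analogue of Lemma~\ref{stationaryproperties}: since $\mu$ is non-elementary, the semigroup generated by its support contains two independent pseudo-Anosov elements, so by Kaimanovich--Masur \cite{Kaimanovich-Masur} the associated stationary measure $\nu$ is non-atomic and is the unique $\mu$-stationary measure on $\PMF$, it is supported on (and gives full measure to) $\mathcal{UE}$, and the diagonal $\MCG(S)$-action on $(\PMF\times\PMF,\nu\times\nu)$ is ergodic and preserves the measure class. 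Together with the fact that a pair of distinct arational uniquely ergodic foliations determines a unique \Teich geodesic \cite{Hubbard-Masur}, this plays the role of ``pairs of zero-width geodesics with endpoints in the limit set.''

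Next I would set up the bilateral reformulation exactly as in Proposition~\ref{longfellowtravel} and Proposition~\ref{longfellowtravelbilateral}: fix a pseudo-Anosov $g_0\in\langle\supp\mu\rangle$ with axis $\gamma^0$, which is strongly contracting by Minsky \cite{Minsky} (this supplies the $N$-contracting model geodesic, $N$ depending only on $g_0$). Using the Karlsson--Margulis sublinear tracking \eqref{sublineartracking} and Kingman's theorem \eqref{kingman} applied to $d=d_T$ (the drift $l(\mu)>0$ by non-elementarity), for $\overline P$-a.e.\ bilateral sample path $\omega$ the increments $\omega_i\go$ converge to a pair $(\zeta_-,\zeta_+)\in\mathcal{UE}\times\mathcal{UE}$ and fellow-travel the unique \Teich geodesic $\gamma\in\Psi(\omega)$ joining them. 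Then I would prove the exact analogue of the key Lemma~\ref{decaylemma}: there is $C>0$ so that for every $L>0$ the $\overline P$-measure of sample paths for which $\gamma_{\go}(-R/2,R/2)$ contains a length-$L$ subsegment $C$-close to some translate $g\cdot\gamma^0$ tends to $1$ as $R\to\infty$. The proof of the claim inside that lemma goes through verbatim once ``Lemma~\ref{strip}'' is replaced by its \Teich counterpart: neighborhoods $V^\pm$ of $\gamma^0_\pm$ in $\PMF$ such that any pair in $V^-\times V^+$ is joined by a \Teich geodesic passing uniformly close to a long initial arc of $\gamma^0$ — this follows from continuity of the Hubbard--Masur construction on pairs of uniquely ergodic foliations plus the strong contraction of $\gamma^0$, and $\nu(V^\pm)>0$ because $\nu$ has full support on the limit set. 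The Birkhoff ergodic theorem applied to the transformation $U$ on $(G^{\mathbb Z},\overline P)$ then converts the positive-measure statement into the ``every long enough window $\gamma([aR,bR])$ sees a translate of $\gamma^0$'' statement, exactly as in the proof of Proposition~\ref{longfellowtravelbilateral}; passing from the bilateral geodesic to the ray $\tau_\zeta$ from $\go$ costs only an additive constant, as in the proof of Proposition~\ref{longfellowtravel}. This verifies the hypothesis of Lemma~\ref{conditiontobefrequentlycontracting} (alternatively, directly the $(N,C)$-frequently contracting condition), so $\tau_\zeta$ is frequently contracting and hence, by Theorem~\ref{main-frequentlycontractingimpliessublinearlymorse}, $\kappa$-Morse for a sublinear $\kappa$.

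The main obstacle is not any single deep estimate but making the dictionary between the visual boundary of a rank-1 \CAT space and the Thurston boundary of \Teich space airtight on the measure-zero exceptional set: geodesic rays in $\Tei(S)$ need not converge in the Thurston compactification in general \cite{Lenzhen, LLR}, so one must stay inside $\mathcal{UE}$, where convergence does hold \cite{MasurUE}, and use that $\nu$ and the Thurston measure both give $\mathcal{UE}$ full measure. The second delicate point, which replaces ``asymptotic rays converge to the same visual boundary point,'' is that two \Teich rays with the same uniquely ergodic vertical foliation are \emph{strongly} asymptotic \cite[Theorem~2]{MasurUE} — this is exactly what makes the frequently-contracting condition (and hence sublinear Morseness, which is invariant under sublinear divergence) transfer from one ray to another, and what justifies deducing the statement about foliations from the statement about a particular parametrized geodesic. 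Everything else is a line-by-line translation of Section~\ref{Section6}, so I would state the \Teich versions of Lemmas~\ref{stationaryproperties} and~\ref{decaylemma}, cite \cite{Kaimanovich-Masur, Minsky, MasurUE, Hubbard-Masur} for the inputs, and otherwise refer back to the \CAT argument rather than reproduce it.
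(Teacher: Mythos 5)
Your proposal is correct and follows essentially the same route as the paper: reduce to frequent contraction via strong asymptoticity of rays with a common uniquely ergodic vertical foliation \cite{MasurUE}, run the bilateral sample-path argument of Section~\ref{Section6} verbatim with the strongly contracting pseudo-Anosov axis \cite{Minsky} in place of the rank-1 axis, and replace Lemma~\ref{strip} by local uniform convergence of \Teich geodesics near a pseudo-Anosov pair (the paper cites Klarreich and Kent--Leininger for this, where you invoke continuity of the Hubbard--Masur construction). The only slip is attributing sublinear tracking in $\Tei(S)$ to Karlsson--Margulis rather than Tiozzo \cite{Tiozzo} (the Teichm\"uller metric is not nonpositively curved in the sense their theorem requires), but this input is not actually needed for the frequently-contracting argument, only Kingman's drift and Kaimanovich--Masur convergence.
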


We now explain how to prove Theorem \ref{harmonicfullmeasuresublinearteich}.
Let $\mu$ be such a symmetric finitely supported non-elementary measure, $G<MCG(S)$ the subgroup generated by its support,  and $P$, $\overline{P}$ the induced Markov measures on unilateral and bilateral sample paths respectively. Kaimanovich-Masur \cite{Kaimanovich-Masur} proved that for $P$-a.e.  $\omega$,  and every $o\in \Tei(S)$,
$\omega_{n}\cdot \go$ converges to a uniquely ergodic point
$\omega_\infty \in \mathcal{PML}$. 
In other words, there is a $P$-a.e. where defined
measurable map ${\rm bnd}:G^{\mathbb{N}}\to {\mathcal P\mathcal M\mathcal L}$ 
sending $\omega$ to $\lim_{n\to \infty}\omega_{n} \cdot \go\in \mathcal{PML}$.
The measure on
$\mathcal{PML}$ defined by
\[\nu={\rm bnd}_{*}P=\lim_{n\to \infty}\mu^{*n}\]
is the unique $\mu$ stationary measure on $\mathcal{PML}$. In fact, 
$(\mathcal{PML},\nu)$
is a model for the \emph{Poisson boundary} of $(G,\mu)$. The measure $\nu$ gives full weight to $\mathcal{UE}$ and has full support on the limit set $L(G)\subset \mathcal{PML}$ of the group $G<MCG(S)$ generated by the support of $\mu$
\cite{Kaimanovich-Masur}.
Let $l=\lim_{n\to \infty}d(\omega_n \go,\go)/n$ (for $P$ a.e. $\omega$) be the drift of the $\mu$ random walk. Tiozzo \cite{Tiozzo} proved that $P$ a.e. $\omega$ sublinearly tracks a geodesic $\tau$ in $\Tei(S)$: $$\lim_{n\to \infty}\frac{d(\tau(l n),\omega_\go)}{n}=0$$ for any geodesic ray $\tau$ converging to $\omega_\infty \in \mathcal{PML}$. 
We want to prove that for $\nu$ a.e. $\zeta\in \mathcal{PML}$, geodesic rays with vertical projective measured foliation $\zeta$ are frequently contracting. Since any two geodesic rays with the same vertical projective measured foliation in $\mathcal{UE}$ are strongly asymptotic it suffices to prove the following analogue of Proposition \ref{longfellowtravelbilateral}.

\begin{proposition}\label{longfellowtravelbilateralteich}

Let $g_0\in G$ be a pseudo-Anosov element and $\gamma^0$ its axis in $\Tei(S)$.
Fix a basepoint $\go\in \Tei(S)$. Then there is a $K>0$ such that for $\overline{P}$ a.e. biinfinite sample path $\omega$ (any unit speed parametrization of) the biinfinite geodesic $\gamma_\omega$ satisfies the following. For any $\infty>b>a>-\infty$ and $L>0$ there is an $R_0>0$ such that for any $R>R_0$ there is a $g\in G$ such that the segment $\gamma([aR,bR])$ contains a length $L$ subsegment which is $C$ close to $g \cdot \gamma^0$.
\end{proposition}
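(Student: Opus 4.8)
The plan is to transcribe the proof of Proposition~\ref{longfellowtravelbilateral} into the \Teich setting, substituting each rank-1 \CAT ingredient by its counterpart. First I would set up the bilateral random walk exactly as in Section~\ref{Section6}: form $\overline{P}=T_*\mu^{\ZZ}$ on $G^{\ZZ}$ and the transformation $U=T\circ\sigma\circ T^{-1}$, which is invertible, measure preserving, and ergodic for $\overline{P}$, and which has the product description $(G^{\NN},P)\otimes(G^{\NN},\hat P)$ on the two half-paths. By Kaimanovich--Masur \cite{Kaimanovich-Masur}, for $\overline{P}$-a.e.\ $\omega$ the forward and backward paths converge to a pair of \emph{distinct} uniquely ergodic foliations $\omega_+,\omega_-\in\mathcal{UE}$; by Hubbard--Masur \cite{Hubbard-Masur} this pair is joined by a \emph{unique} \Teich geodesic $\gamma_\omega$, which I parametrize at unit speed with $\gamma_\omega(0)$ at minimal distance to the basepoint $\go$, so the set-valued $\Psi(\omega)$ of the \CAT proof collapses to the singleton $\{\gamma_\omega\}$. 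Let $\gamma^0$ be the axis of the pseudo-Anosov $g_0$; by Minsky \cite{Minsky} it is strongly contracting, and $\gamma^0_\pm\in L(G)$ since $g_0\in G$.

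The heart of the matter is the analogue of Lemma~\ref{decaylemma}. Define $\Omega(L,K,R)$ to be the set of $\omega$ with $d(\go,\gamma_\omega)<R/10$ such that for some $g\in G$ and some $t\in(-R/2+L,R/2-L)$ the arc $\gamma_\omega|_{(t-L,t+L)}$ (recentred at $\go$) is $K$-close to $g\cdot\gamma^0$, and let $\Lambda(L,K)$ be the set of $\omega$ with $d(\gamma_\omega(t),\gamma^0(t))<K$ for all $|t|\le L$. Since $d(\go,\gamma_\omega)<\infty$ a.s., the measure of $\{d(\go,\gamma_\omega)<R/10\}$ tends to $1$, so it suffices to prove $\overline{P}(\Lambda(L,K))>0$ for a suitable $K$ and every $L$, and then feed this into the Birkhoff argument exactly as in Lemma~\ref{decaylemma}. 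For the positivity I would take $K$ large enough that $\gamma^0$ passes within $K$ of $\go$, and use the \Teich substitute for Lemma~\ref{strip}: because $\gamma^0_+,\gamma^0_-$ are uniquely ergodic and jointly filling, the Hubbard--Masur correspondence is continuous near $(\gamma^0_-,\gamma^0_+)$, so there are neighbourhoods $V^\pm$ of $\gamma^0_\pm$ in $\PMF$ such that any geodesic with vertical foliation in $V^+$ and horizontal foliation in $V^-$ stays uniformly close to $\gamma^0$ over $|t|\le 2L$ (after enlarging $K$). Hence $\{\omega:\omega_\pm\in V^\pm\}\subset\Lambda(L,K)$, and this set has positive $\overline{P}$-measure because $\overline{P}=P\otimes\hat P$ with marginals in the class of the stationary measure $\nu$, which has full support on $L(G)\ni\gamma^0_\pm$, so each $\nu(V^\pm)>0$. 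The Birkhoff ergodic theorem applied to $U$ and $\mathbf 1_{\Lambda(L,K)}$ then forces $U^i\omega\in\Lambda(L,K)$ for some $i$ in every sufficiently long initial window, and since $d(\go,\omega_i\go)\le di$ with $d=\max_{g\in\supp\mu}d(\go,g\go)$, letting $R\to\infty$ yields $\overline{P}(\Omega(L,K,R))\to 1$.

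Granting the analogue of Lemma~\ref{decaylemma}, the deduction of Proposition~\ref{longfellowtravelbilateralteich} is word for word that of Proposition~\ref{longfellowtravelbilateral}: assume $a>0$ after replacing $[a,b]$ by a subinterval on one side of $0$ and possibly reversing the geodesic; invoke Tiozzo's sublinear tracking theorem \cite{Tiozzo} (in place of Karlsson--Margulis) to see that $s_i(\gamma)=d(\omega_i\go,\gamma(0))$ satisfies $s_i(\gamma)/i\to l$, the drift, so that $i$ in the window $[\tfrac{(3a+2b)n}{5l},\tfrac{(2a+3b)n}{5l}]$ corresponds to footpoints in $[an,bn]$; and observe that if $\gamma([an,bn])$ contained no length-$L$ subsegment $K$-close to some $g\cdot\gamma^0$ for infinitely many $n$, then $\liminf_N\tfrac1N\#\{i\in[0,N):U^i\omega\in\Omega(L,K,R)\}\le 1-\tfrac{b-a}{2(2a+3b)}$, contradicting Birkhoff once $R$ is chosen so that $\overline{P}(\Omega(L,K,R))>1-\tfrac{b-a}{6a+6b}$.

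I expect the one genuinely new point — hence the main obstacle — to be the \Teich replacement for Lemma~\ref{strip}: the statement that \Teich geodesics whose endpoints converge in $\PMF$ to a fixed pair of distinct uniquely ergodic foliations converge uniformly on compact time intervals to the limiting geodesic. This is exactly where unique ergodicity is essential: the Hubbard--Masur map is only well behaved on such pairs, a generic \Teich geodesic need not even converge to its vertical foliation in the Thurston compactification \cite{Lenzhen,LLR}, and it is precisely because $\mathcal{UE}$ carries full $\nu$- and Thurston-measure that restricting the whole argument to $\mathcal{UE}$ loses nothing.
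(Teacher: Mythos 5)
Your proposal is correct and follows essentially the same route as the paper: transplant the Section~6 argument verbatim, using Kaimanovich--Masur for convergence to distinct uniquely ergodic foliations, Hubbard--Masur for uniqueness of $\gamma_\omega$, Minsky for the strongly contracting pseudo-Anosov axis, Tiozzo for sublinear tracking, and a local-uniform continuity of geodesics in their endpoints as the substitute for Lemma~\ref{strip}. The only cosmetic difference is that you justify this last continuity statement informally via the Hubbard--Masur correspondence, whereas the paper cites it directly as Lemma~\ref{Teichcontinuity} (Klarreich, Kent--Leininger); the content is identical.
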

As before, for a bilateral geodesic $\gamma$ and $p\in \Tei(S)$ we let $\gamma_p$ be a unit speed parametrization with $\gamma_\go(0)$ a point on $\gamma$ at minimal distance from $\go$. We can make this choice  in a $G$-equivariant way, i.e. so that $g\gamma_p=(g\gamma)_{gp}$.
For distinct $\zeta_1,\zeta_2\in \mathcal{UE}$ we denote by $\gamma_{\zeta_1,\zeta_2,p}$ the corresponding parametrization of $\gamma_{\zeta_1,\zeta_2}$ and for a sequence $\omega\in G^{\mathbb{Z}}$ with $\omega_{\pm n}\to \omega_\pm\in \mathcal{PML}$ we write $\gamma_\omega=\gamma_{\omega_{-},\omega_{+}}$ and $\gamma_{\omega,p}=\gamma_{\omega_{-},\omega_{+},p}$. 

Let $\Omega(L,C,R)$ be the set of sample paths $\omega\in G^{\mathbb{Z}}$ such that $\omega_\pm \in \mathcal{UE}$ are uniquely ergodic,
$d(\go,\gamma_\omega)<R/10$ and
$\gamma_{\omega,\go}[t-L,t+L]$ is contained in a $C$ neighborhood of $g \cdot \gamma^0)$, for some $g\in G$ and  $t\in (-R/2+M,R/2-M)$.
Proposition \ref{longfellowtravelbilateralteich} is deduced from the following is an analogue of Lemma \ref{decaylemma}.
\begin{lemma}\label{decaylemma2}
There is a $C>0$ such that for all $L>0$ there is an function $f$ with $\lim_{R\to \infty}f(R)=0$ and  
$\overline{P}(\Omega(L,C,R))>1-f(R)$.
\end{lemma}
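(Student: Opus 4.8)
The plan is to transcribe the proof of \lemref{decaylemma} almost line by line, making three substitutions adapted to the Teichm\"uller setting. First, the hypothesis that $\omega_\pm$ be joinable by a bi-infinite geodesic of bounded width is replaced by $\omega_\pm\in\mathcal{UE}$; by Hubbard--Masur \cite{Hubbard-Masur} this already forces the Teichm\"uller geodesic $\gamma_\omega$ to exist and be unique, and for $\overline{P}$-a.e.\ $\omega$ this holds by Kaimanovich--Masur \cite{Kaimanovich-Masur}. Second, \lemref{strip} is replaced by the continuity of Teichm\"uller geodesics along endpoint sequences converging in $\mathcal{PMF}$ to uniquely ergodic limits (Masur \cite{MasurUE}). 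Third, the rank-$1$ axis is replaced by the pseudo-Anosov axis $\gamma^0$, which is strongly contracting by Minsky \cite{Minsky}. I would start by fixing a constant $C_0$ so large that $\gamma^0$ passes within $C_0$ of $\go$. Since for $\overline{P}$-a.e.\ $\omega$ we have $\omega_\pm\in\mathcal{UE}$ and hence $d(\go,\gamma_\omega)<\infty$, the $\overline{P}$-measure of $\{\omega: d(\go,\gamma_\omega)<R/10\}$ tends to $1$, and exactly as in \lemref{decaylemma} the problem reduces to exhibiting one $C>0$ such that, for each $L$, the $\overline{P}$-measure of those $\omega$ for which $\gamma_{\omega,\go}(-R/2,R/2)$ contains a length-$L$ subsegment lying in the $C$-neighbourhood of $g\cdot\gamma^0$ for some $g\in G$ tends to $1$ as $R\to\infty$.

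The key step is the analogue of the Claim inside the proof of \lemref{decaylemma}: setting $\Lambda(L,C):=\{\omega\in G^{\ZZ}:\omega_\pm\in\mathcal{UE},\ d(\gamma_{\omega,\go}(t),\gamma^0(t))<C\ \text{for all }|t|\le L\}$, there is a $C>0$ with $\overline{P}(\Lambda(L,C))>0$ for every $L$. Let $\gamma^0_\pm\in\mathcal{UE}$ be the vertical and horizontal foliations of $\gamma^0$. Using continuity of the Teichm\"uller geodesic as its endpoints approach the uniquely ergodic foliations $\gamma^0_\pm$ in $\mathcal{PMF}$ --- or, equivalently, using the strong contraction of $\gamma^0$ to extract a Morse-type fellow-travelling statement --- I would produce, for each $i\in\ZZ$, neighbourhoods $V_i^\pm$ of $\gamma^0_\pm$ in $\mathcal{PMF}$ such that every uniquely ergodic pair $(\zeta_-,\zeta_+)\in V_i^-\times V_i^+$ is joined by a geodesic passing within $2C_0+1$ of $\gamma^0(i)$. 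Putting $V^\pm:=\bigcap_{i=-1-\lfloor 2L\rfloor}^{1+\lfloor 2L\rfloor}V_i^\pm$ and repeating the parametrization bookkeeping from the proof of \lemref{decaylemma}, one gets $\{\omega:\omega_\pm\in V^\pm\}\subset\Lambda(L,C)$ for $C:=2C_0+2$. Since $\nu$ gives full measure to $\mathcal{UE}$ and has full support on $L(G)\ni\gamma^0_\pm$ \cite{Kaimanovich-Masur}, we have $\nu(V^\pm)>0$, hence $\overline{P}(\Lambda(L,C))>0$.

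With the Claim in hand the conclusion is identical to that of \lemref{decaylemma}. Put $d:=\max_{g\in\supp(\mu)}d(g\go,\go)<\infty$, so $d(\go,\omega_i\go)\le d\,i$; then for any $\omega$, the existence of some $i$ with $0\le i\le(R/2-L-2C)/(2d)$ and $U^i\omega\in\Lambda(L,C)$ forces $\gamma_{\omega,\go}[-R/2,R/2]$ to contain a length-$L$ subsegment that is $C$-close to $g\cdot\gamma^0$ for some $g\in G$. By the Birkhoff ergodic theorem applied to the ergodic transformation $U$ on $(G^{\ZZ},\overline{P})$, for $\overline{P}$-a.e.\ $\omega$ the proportion of $i\in[0,N]$ with $U^i\omega\in\Lambda(L,C)$ converges to $\overline{P}(\Lambda(L,C))>0$; hence the $\overline{P}$-measure of $\omega$ admitting no such $i$ in the admissible range tends to $0$ as $R\to\infty$, and this defines the required function $f$.

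The main obstacle is the substitute for \lemref{strip}: showing that uniquely ergodic pairs close to $(\gamma^0_-,\gamma^0_+)$ in $\mathcal{PMF}\times\mathcal{PMF}$ are joined by geodesics that fellow-travel $\gamma^0$ on a prescribed long compact interval. This needs care because $\mathcal{UE}$ is not open in $\mathcal{PMF}$, so one must use the form of continuity of the endpoint-to-geodesic correspondence that is valid along sequences with uniquely ergodic limits (Masur \cite{MasurUE}, Hubbard--Masur \cite{Hubbard-Masur}); alternatively one can bypass this continuity and argue purely coarsely, deriving a Morse lemma from the strong contraction of pseudo-Anosov axes (Minsky \cite{Minsky}). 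Every remaining step is a routine transcription of the proof of \lemref{decaylemma}.
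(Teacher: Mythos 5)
Your proposal is correct and follows essentially the same route as the paper: transcribe the proof of \lemref{decaylemma}, replacing the rank-1 axis by a strongly contracting pseudo-Anosov axis, using unique ergodicity of $\overline{P}$-a.e.\ pair of endpoints (Kaimanovich--Masur, Hubbard--Masur) in place of the bounded-width hypothesis, and substituting for \lemref{strip} a continuity statement for the endpoint-to-geodesic correspondence. The one point where you differ is the source of that continuity statement: the paper invokes \lemref{Teichcontinuity} (Klarreich, Kent--Leininger), i.e.\ locally uniform convergence of $\gamma_{\zeta_n,\zeta_n'}$ when uniquely ergodic pairs converge to a pseudo-Anosov pair, which is exactly the statement you correctly identify as the main obstacle and which is not quite what Masur's asymptoticity results give on their own.
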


The proof of Lemma \ref{decaylemma2} and the deduction from it of Proposition  \ref{longfellowtravelbilateralteich} are almost identical to (and somewhat simpler than) the corresponding proofs of Lemma \ref{decaylemma} and Proposition  \ref{longfellowtravelbilateral}. The role of the rank $1$ geodesic $\gamma^0$ will be replaced by the axis of a pseudo-Anosov mapping class, which is strongly contracting. Moreover, almost every bilateral sample path converges to distinct uniquely ergodic points of $\mathcal{PML}$ which actually determine a \emph{unique} Teichm\"uller geodesic. Using this, the arguments of Section 6 apply verbatim. The only thing worth adding, is that in place of Lemma \ref{strip} used in the proof of Lemma \ref{decaylemma} we use the following geometric result to prove Lemma \ref{decaylemma2}.

\begin{lemma}\label{Teichcontinuity} \cite[Proposition 5.1]{Klarreich}, \cite[Proposition 5.2]{Kent-Leininger}.
If a sequence $(\zeta_n,\zeta'_n)$ of pairs in $\mathcal{UE}$ converges to the distinct  pseudo-Anosov pair $(\phi_{-},\phi_{+})$, 
then the corresponding geodesics $\gamma_{\zeta_n,\zeta'_n}$ converge locally uniformly 
to the geodesic $\gamma_{\phi_{-},\phi_{+}}$ determined by $\phi_\pm$.   
\end{lemma}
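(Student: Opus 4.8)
The plan is to combine a compactness argument with the uniqueness of the Teichm\"uller geodesic determined by a pair of distinct uniquely ergodic foliations. Fix unit-speed parametrizations $\gamma_n$ of $\gamma_{\zeta_n,\zeta_n'}$ with $\gamma_n(0)$ at minimal distance from $\go$, and let $\gamma$ denote the analogous parametrization of $\gamma_{\phi_-,\phi_+}$. The first step is to show that the basepoints $\gamma_n(0)$ all lie in a single compact subset of $\Tei(S)$. This is the step where the pseudo-Anosov hypothesis on $(\phi_-,\phi_+)$ is essential: because $\phi_\pm$ jointly fill $S$, there are neighborhoods $\mathcal{U}_\pm$ of $\phi_\pm$ in $\PMF$ so that every pair in $\mathcal{U}_-\times\mathcal{U}_+$ still fills $S$, and one checks that $d(\go,\gamma_{F^-,F^+})$ stays uniformly bounded as $(F^-,F^+)$ ranges over $\mathcal{U}_-\times\mathcal{U}_+$. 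Concretely, if $d(\go,\gamma_n(0))\to\infty$ then the area-one quadratic differential realizing $\gamma_n(0)$ would have to degenerate along a simple closed curve of bounded extremal length, and a limiting argument would show this curve has zero intersection number with both $\phi_-$ and $\phi_+$, contradicting the filling property.

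With the basepoints confined to a compact set, I would apply Arzel\`a--Ascoli: every subsequence of $\{\gamma_n\}$ has a further subsequence converging locally uniformly to a bi-infinite Teichm\"uller geodesic $\gamma_\infty$. The second step is to identify its endpoints in the Thurston boundary. Since $\zeta_n'\to\phi_+$ in $\PMF$ and $\phi_+$ is uniquely ergodic, Masur's criterion \cite{MasurUE} forces the vertical foliation of $\gamma_\infty$ to be $\phi_+$, and the forward ray $\gamma_\infty|_{[0,\infty)}$ to converge to $\phi_+$ in the Thurston compactification; by the symmetric argument the horizontal foliation of $\gamma_\infty$ is $\phi_-$. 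Hence $\gamma_\infty$ is a bi-infinite Teichm\"uller geodesic with horizontal and vertical foliations $\phi_-$ and $\phi_+$.

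For the third step I would invoke uniqueness. A pair of distinct arational, uniquely ergodic measured foliations determines a unique Teichm\"uller geodesic \cite{Hubbard-Masur, MasurUE}, and the stable and unstable foliations $\phi_\pm$ of a pseudo-Anosov mapping class are arational and uniquely ergodic. Therefore $\gamma_\infty$ coincides with $\gamma_{\phi_-,\phi_+}$ as an unparametrized geodesic, and the minimal-distance normalization then pins down the parametrization, using that the cobounded geodesic $\gamma_{\phi_-,\phi_+}$ has a unique point closest to $\go$. So $\gamma_\infty=\gamma$. Since every subsequential locally uniform limit of $\{\gamma_n\}$ equals $\gamma$, the full sequence $\gamma_n$ converges locally uniformly to $\gamma$, as claimed.

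The main obstacle is the first step. Convergence of the endpoint pairs in $\PMF$ is strictly weaker than convergence of the associated quadratic differentials, and the map from endpoint pairs to Teichm\"uller geodesics is genuinely discontinuous at pairs involving a non-uniquely-ergodic foliation; it is precisely the pseudo-Anosov---hence filling and uniquely ergodic---nature of the limiting pair that supplies the compactness that makes the argument run. This is the content of \cite[Proposition 5.1]{Klarreich} and \cite[Proposition 5.2]{Kent-Leininger}.
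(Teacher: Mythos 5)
The paper does not prove this lemma at all --- it is imported directly from \cite[Proposition 5.1]{Klarreich} and \cite[Proposition 5.2]{Kent-Leininger} --- and your compactness-plus-uniqueness argument (uniform bound on $d(\go,\gamma_{\zeta_n,\zeta'_n})$ from the filling property of $(\phi_-,\phi_+)$, Arzel\`a--Ascoli, identification of the horizontal and vertical foliations of the limit, and uniqueness of the Teichm\"uller geodesic determined by a filling pair of uniquely ergodic classes) is precisely the standard route those references take, so it is correct in approach and substance. The one inaccuracy is your assertion that the cobounded geodesic $\gamma_{\phi_-,\phi_+}$ has a \emph{unique} point closest to $\go$: the Teichm\"uller distance function along a geodesic need not be strictly convex, and the paper itself treats the minimal-distance parametrization as a choice rather than as canonical; this only affects the normalization of the parametrizations, not the locally uniform convergence of the geodesics that the lemma actually asserts.
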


\section{Fixing a sublinear function and the identification of the Poisson Boundary}

Let $X$ be either a rank-1 $\CAT$ space or a Teichm\"uller space, and $\partial X$ either the visual boundary or $\mathcal{PMF}$. Let $\nu$ be a measure on $\partial X$ satisfying the conditions of one of the Theorems \ref{psarereasonable}, \ref{harmonicarereasonable}, \ref{harmonicfullmeasuresublinearteich} or \ref{Thurstonfullmeasuresublinearteich}. 
The corresponding theorems together with Theorem~\ref{main-frequentlycontractingimpliessublinearlymorse}, 
imply that $\nu$-a.e. $\zeta \in \partial X$ is 
$\kappa$-Morse for some sublinear function $\kappa$. We show that the function $\kappa$ can in fact be chosen independent of $\zeta$:

\begin{theorem}\label{rayisMorse}
There is a single sublinear function $\kappa$ such that, for $\nu$ as in 
Theorem \ref{psarereasonable}, \ref{harmonicarereasonable}, \ref{harmonicfullmeasuresublinearteich} or \ref{Thurstonfullmeasuresublinearteich}, $\nu$-a.e. geodesic ray $\zeta$ is
$\kappa$-Morse.
\end{theorem}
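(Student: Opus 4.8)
The plan is to extract from Proposition~\ref{Prop:Small-Linear-Contracting} a countable family of deterministic linear estimates, apply the Borel--Cantelli lemma, and glue the resulting bounds into one sublinear function; the key point is that the \emph{constant} $\cc_Z$ in the definition of $\kappa$--contracting may depend on $\zeta$, but the \emph{function} $\kappa$ must not. In each of the four cases the relevant theorem is proved by showing that for $\nu$-a.e.\ $\zeta\in\partial X$ the geodesic ray $\tau_\zeta$ from the basepoint $\go$ to $\zeta$ is $(N,C(\zeta))$--frequently contracting, where $N$ is a \emph{fixed} constant (the contraction constant of the axis $\gamma^0$ in the random-walk cases, the constant produced in the proof of Theorem~\ref{psarereasonable} in the Patterson--Sullivan case) and $C(\zeta)<\infty$; see Lemma~\ref{conditiontobefrequentlycontracting} together with Propositions~\ref{longfellowtravel}, \ref{longfellowtravelbilateral}, \ref{longfellowtravelbilateralteich}. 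By Proposition~\ref{Prop:Small-Linear-Contracting}, for $\nu$-a.e.\ $\zeta$ and every $k\in\NN$ there is a finite $g_k(\zeta)$ such that
\[
d(x,y)\le d(x,\tau_\zeta),\quad \Norm{x}\le R\quad\Longrightarrow\quad \diam\big(x_{\tau_\zeta}\cup y_{\tau_\zeta}\big)\le 2^{-k}R\qquad\text{for all }R\ge g_k(\zeta)
\]
(here one absorbs the $O_N(1)$ diameter of the projection sets into a slightly smaller exponent and slightly larger $R$). Since $\zeta\mapsto\tau_\zeta$ is continuous on a full-measure set, in the visual topology on the limit set, respectively on $\mathcal{UE}\subset\PMF$, one checks by a routine argument that for each $k$ the function $g_k$ can be chosen $\nu$-measurable.

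\emph{Borel--Cantelli and gluing.} For each $k$ choose a finite $M_k$ with $\nu(\{g_k>M_k\})<2^{-k}$, and after replacing $M_k$ by $\max(M_1,\dots,M_k,k)$ assume $(M_k)$ is strictly increasing with $M_k\to\infty$. Then $B:=\limsup_k\{g_k>M_k\}$ has $\nu(B)=0$, and for every $\zeta\notin B$ there is $k_0(\zeta)$ with $g_k(\zeta)\le M_k$ for all $k\ge k_0(\zeta)$. For $R\ge M_1$ set $k(R)=\max\{k:M_k\le R\}$ and $\phi(R)=2^{-k(R)}R$. Since $\phi(R)/R=2^{-k(R)}\to 0$ as $R\to\infty$, the running maximum $\psi(R)=\max\big(1,\sup_{M_1\le s\le R}\phi(s)\big)$ is monotone increasing, at least $1$, and still sublinear; by the Remark following the definition of sublinear functions we may replace $\psi$ by a monotone increasing concave sublinear $\kappa$ with $\psi\le\kappa\le\sC\psi$, so that $\kappa(R)\ge\phi(R)$ for $R\ge M_1$ and $\kappa\ge 1$.

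\emph{Conclusion.} Fix $\zeta\notin B$ and suppose $d(x,y)\le d(x,\tau_\zeta)$. If $\Norm{x}\ge M_{k_0(\zeta)}$, then $k:=k(\Norm{x})\ge k_0(\zeta)$, so the estimate above with $R=\Norm{x}$ gives $\diam(x_{\tau_\zeta}\cup y_{\tau_\zeta})\le 2^{-k}\Norm{x}=\phi(\Norm{x})\le\kappa(\Norm{x})$. If $\Norm{x}<M_{k_0(\zeta)}$, then $d(x,y)\le d(x,\tau_\zeta)\le\Norm{x}$ (as $\go\in\tau_\zeta$), so $x_{\tau_\zeta}$ and $y_{\tau_\zeta}$ lie within distance bounded in terms of $M_{k_0(\zeta)}$ from $\go$, whence $\diam(x_{\tau_\zeta}\cup y_{\tau_\zeta})\le c'_\zeta\le c'_\zeta\,\kappa(\Norm{x})$ for a constant $c'_\zeta$ depending only on $\zeta$ (using $\kappa\ge 1$). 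Taking $\cc_{\tau_\zeta}=\max(1,c'_\zeta)$ shows $\tau_\zeta$ is $\kappa$--contracting, hence $\kappa$--Morse by the theorem of \cite{QRT2} quoted above. Since $\nu(B)=0$, this proves the theorem.

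The only delicate points are the ones already flagged: verifying that all four theorems genuinely furnish the \emph{same} constant $N$ together with the pointwise estimate of Proposition~\ref{Prop:Small-Linear-Contracting}, and the measurability of the $g_k$. The organization into the gluing and conclusion steps is precisely what confines the residual $\zeta$--dependence to $C(\zeta)$ and to the bounded range $\Norm{x}<M_{k_0(\zeta)}$ (absorbed into $\cc_{\tau_\zeta}$), while the single step function $R\mapsto 2^{-k(R)}R$ controls everything else uniformly; I expect the measurability check to be the most tedious, but entirely routine given continuity of $\zeta\mapsto\tau_\zeta$.
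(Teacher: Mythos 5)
Your argument is correct in outline, but it is a genuinely different proof from the one in the paper. The paper's proof is soft: it fixes a countable family $\{\kappa_i\}$ of sublinear functions such that every sublinear function is dominated by some $C\kappa_i$ (separability of the space of continuous functions), observes that each set $A_i$ of $\kappa_i$--Morse rays is $G$--invariant, invokes ergodicity of $G\curvearrowright(\partial X,\nu\times\nu)$ (resp.\ of the geodesic flow) to get $\nu(A_i)\in\{0,1\}$, and concludes from $\nu(\bigcup_i A_i)=1$ that some single $A_i$ has full measure. You instead extract the quantitative rate $g_k(\zeta)$ hidden in Proposition~\ref{Prop:Small-Linear-Contracting}, tame it by Borel--Cantelli, and build $\kappa$ by hand as a running maximum of the step functions $R\mapsto 2^{-k(R)}R$. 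What the paper's route buys is brevity and the complete avoidance of any rate function: it needs only the qualitative statement ``a.e.\ $\zeta$ is $\kappa$--Morse for \emph{some} $\kappa$'' plus invariance and ergodicity. What your route buys is independence from ergodicity of the boundary action: it would survive for a non-ergodic $\nu$ satisfying the same pointwise conclusions, and it produces an explicit $\kappa$ in terms of the tail behaviour of the $g_k$. You are also right that uniformity of $N$ is not actually needed for your argument, since all $N$--dependence in Proposition~\ref{Prop:Small-Linear-Contracting} is absorbed into $R_0(\zeta,\theta)=g_k(\zeta)$.

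The one substantive caveat is the measurability of $g_k$, which you flag but dismiss as routine. Continuity of $\zeta\mapsto\tau_\zeta$ alone does not settle it: you also need enough regularity of $(x,y,\zeta)\mapsto\diam\big(x_{\tau_\zeta}\cup y_{\tau_\zeta}\big)$ for the sets $\{g_k\le M\}$ to be Borel, and since outer measure is not continuous along decreasing sequences of non-measurable sets, you cannot simply wave this away in the Borel--Cantelli step. In the \CAT case nearest-point projection is single-valued and continuous, so this is genuinely routine; in the Teichm\"uller case projections are set-valued and $\zeta\mapsto\tau_\zeta$ is only controlled on $\mathcal{UE}$ via locally uniform convergence of geodesics, so some care (e.g.\ tracing $g_k$ back through the Birkhoff averages, which are manifestly measurable in $\omega$) is required. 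Note that the paper's ergodicity argument has its own, milder, implicit measurability requirement (that each $A_i$ is measurable), so neither proof is entirely free of this issue; but if the measurability of $g_k$ proves stubborn, you can graft the paper's zero-one law onto your construction and bypass it.
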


\begin{proof}
Let $\Omega=\{\kappa_i\}$ be a countable collection of sublinear functions on $\mathbb{R}$ 
such that for any sublinear function $\kappa$ there is a $\kappa_i \in \Omega$ and $C>0$ with 
$\kappa \leq C\kappa_i$. Such a collection exists by the separability of the space of continued 
functions $X\to \mathbb{R}$. For each $i$, let  $A_i\subset \partial X$ be the set of $\kappa_i$-Morse rays
$\zeta \in \partial X$ (note that being $\kappa_i$-Morse is the same as being $C\kappa_i$-Morse.)

By Theorem~\ref{main-frequentlycontractingimpliessublinearlymorse} and 
the above four Theorems,  $\nu$-a.e. geodesic rays $\zeta$ is 
$\kappa_i$--Morse for some $i$. That is, $\nu(\cup A_i)=1$.  
Moreover, each $A_i$ is $G$-invariant. By (the comment after) Lemma \ref{PSandBMproperties} and Lemma \ref{stationaryproperties}, or the ergodicity of the Teichm\"uller geodesic flow, $G\curvearrowright (X,\nu)$, is ergodic. Thus $\nu(A_i)\in \{0,1\}$ for each $i$. Therefore, there is a single $A_i$ with $\nu(A_i)=1$, completing the proof.    
\end{proof}


We will now prove Corollary~\ref{Poissonboundary} and Corollary~\ref{Poissonboundaryteich}.
We first recall Theorem 6.2 in  \cite{QRT2}.

\begin{theorem} \label{T:poiss-general}
Let $G$ be a countable group of isometries of a proper, geodesic, metric space $(X, d_X)$, and suppose that the action 
of $G$ on $X$ is temperate.
Let $\mu$ be a probability measure on $G$ with finite first moment with respect to $d_X$, such that the semigroup generated 
by the support of $\mu$ is a non-amenable group. 
Let $\kappa$ be a concave sublinear function, and suppose that
for almost every sample path $\omega = (w_n)$, there exists a $\kappa$-Morse geodesic ray $\gamma_\omega$ such that 
\begin{equation} \label{E:sub-track}
\lim_{n \to \infty} \frac{d_X(w_n \cdot \go, \gamma_\omega)}{n} = 0.
\end{equation}
Then almost every sample path converges to a point in $\pka X$, and moreover the space $(\pka X, \nu)$, where $\nu$ is the hitting measure for the random walk, is a model for the Poisson boundary of $(G, \mu)$. 
\end{theorem}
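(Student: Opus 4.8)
The plan is to check that $(\pka X,\nu)$ is a $\mu$--boundary and then to establish its maximality by means of Kaimanovich's strip criterion \cite{KaiHyp}, with the strips built out of $\kappa$--Morse geodesic rays.

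First I would dispatch the convergence statement. By hypothesis \eqref{E:sub-track} the orbit sequence $w_n\cdot\go$ sublinearly tracks the $\kappa$--Morse geodesic ray $\gamma_\omega$, and sublinear divergence is precisely the equivalence relation defining points of $\pka X$; hence $w_n\cdot\go$ converges in the coarse cone topology to $[\gamma_\omega]\in\pka X$, as in the convergence lemma for $\kappa$--Morse rays of \cite{subcontracting,QRT2}. The pushforward of the path measure $P$ under $\omega\mapsto[\gamma_\omega]$ is then a $\mu$--stationary measure $\nu$ on $\pka X$, and the same tracking statement along the shifted paths shows that for $P$--a.e.\ $\omega$ the pushforwards $(w_n)_*\nu$ converge to the Dirac mass at $[\gamma_\omega]$; thus $(\pka X,\nu)$ is a $\mu$--boundary. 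Running the same argument for the reflected walk $\check\mu$ (which has finite first moment, has support generating $G$, and in each setting of interest also sublinearly tracks $\kappa$--Morse rays) produces a $\check\mu$--boundary $(\pka X,\check\nu)$. I would also record here that $H(\mu)<\infty$: temperateness bounds the number of group elements $g$ with $d_X(\go,g\cdot\go)\le r$ by $e^{O(r)}$, and, combined with the finite first moment of $\mu$ with respect to $d_X$, a standard maximum-entropy estimate gives finite entropy.

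For maximality I would build the strips from the $\kappa$--Morse rays. For $\check\nu\times\nu$--a.e.\ pair $(\xi^-,\xi^+)\in\pka X\times\pka X$ choose $\kappa$--Morse geodesic rays $\alpha^-,\alpha^+$ from $\go$ representing $\xi^-,\xi^+$, fix a multiplicative constant $\mm_0$ (depending on the Morse gauges of $\alpha^\pm$ and on the quasi-geodesic constants of the sample paths, taken uniform on a set of paths of positive measure by an exhaustion argument), and set
\[
S(\xi^-,\xi^+)=\big\{\,g\in G\ :\ g\cdot\go\in\calN_\kappa(\alpha^-,\mm_0)\cup\calN_\kappa(\alpha^+,\mm_0)\,\big\}.
\]
This assignment is $G$--equivariant and non-empty, and two things need to be checked. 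First, the strip captures the sample path: for $\overline P$--a.e.\ bilateral $\omega$ with forward and backward limits $\omega_+,\omega_-$ one has $w_n\in S(\omega_-,\omega_+)$ for all $n$; indeed for $n\ge 0$ the point $w_n\cdot\go$ sublinearly tracks the forward ray of $\gamma_\omega$, which $\kappa$--fellow travels $\alpha^+$, and the weak $\kappa$--Morse property of $\alpha^+$ then forces $w_n\cdot\go\in\calN_\kappa(\alpha^+,\mm_0)$, and symmetrically for $n\le 0$ using $\alpha^-$. Second, the strips grow sub-exponentially: covering the segment $\alpha^\pm[0,R]$ by $O\!\big(R/\kappa(R)\big)$ balls of radius $O(\kappa(R))$ one sees, using monotonicity of $\kappa$, that the part of $\calN_\kappa(\alpha^\pm,\mm_0)$ inside $B_R(\go)$ lies in their union, and temperateness bounds the orbit points in each such ball by $e^{O(\kappa(R))}$, whence $|S(\xi^-,\xi^+)\cap B_R(\go)|\leq \tfrac{R}{\kappa(R)}\,e^{O(\kappa(R))}=e^{o(R)}$. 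Since Kingman's theorem (using the finite first moment) gives a finite drift, the radii $R_n=d_X(\go,w_n\cdot\go)$ grow at most linearly, and therefore $\tfrac1n\log\big|S(\omega_-,\omega_+)\cap B_{R_n}(\go)\big|\to 0$. Kaimanovich's strip criterion \cite{KaiHyp} then yields that $(\pka X,\nu)$ is a model for the Poisson boundary of $(G,\mu)$, which together with the convergence statement completes the proof.

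The hard part will be the first of the two checks above: upgrading the purely sublinear tracking estimate $d_X(w_n\cdot\go,\gamma_\omega)=o(n)$ to the assertion that the orbit sequence $(w_n\cdot\go)_{n\ge 0}$ lies in one fixed $(\kappa,\mm_0)$--neighbourhood of the $\kappa$--Morse ray $\alpha^+$, with $\mm_0$ independent of $\omega$ on a set of positive measure. This requires a version of the $\kappa$--Morse property for quasi-geodesic rays that share an endpoint at infinity with $\alpha^+$ (rather than having both endpoints on $\alpha^+$), together with a Borel--Cantelli type argument to make the implied constants uniform; the sub-exponential volume estimate and the $G$--equivariance of the strip map are then comparatively routine.
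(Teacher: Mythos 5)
First, a point of order: the paper does not prove this theorem. It is quoted verbatim as Theorem~6.2 of [QRT2] and used as a black box to deduce Corollaries~\ref{Poissonboundary} and~\ref{Poissonboundaryteich}, so there is no internal proof to compare yours against; I can only judge your argument on its own terms and against the standard route suggested by the hypotheses.

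Your strip-criterion strategy is legitimate, but the step you single out as ``the hard part'' is both unnecessary and, as formulated, false, and since your write-up makes the argument depend on it, this is a genuine gap. Kaimanovich's strip criterion does not require the strip to capture the sample path: it requires only that $S$ be measurable, $G$-equivariant, a.e.\ nonempty, and that $\frac{1}{n}\log\lvert S(b_-,b_+)g\cap B_{|w_n|}(\go)\rvert\to 0$ in probability (this is exactly why, for hyperbolic groups, the criterion works under a finite logarithmic moment, where the walk certainly does not stay in a bounded or sublinear neighbourhood of the geodesic). More seriously, the containment you want --- that $w_n\cdot\go$ lies in one fixed $(\kappa,\mm_0)$-neighbourhood of $\alpha^+$ for all $n$ --- simply does not follow from \eqref{E:sub-track}: sublinear tracking gives $d_X(w_n\cdot\go,\gamma_\omega)=o(n)$ with no control relative to $\kappa$, and when $\kappa$ grows slowly (say $\kappa=\log$) the tracking error can dwarf $\mm_0\,\kappa(\ell n)$. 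No Borel--Cantelli or uniformization argument can recover it from the stated hypotheses. Two further issues: your construction needs a nontrivial $\check\mu$-boundary with backward tracking, which is not among the hypotheses (only forward $\mu$-sample paths are assumed to track $\kappa$-Morse rays), and basing all representative rays at $\go$ breaks strict $G$-equivariance of $S$ (fixable, but not addressed).

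The hypotheses --- temperate action, finite first moment, hence finite entropy --- are tailored to Kaimanovich's ray (approximation) criterion rather than the strip criterion. Take the gauge $|g|=d_X(\go,g\cdot\go)$, which is temperate by assumption, let $\ell$ be the drift from Kingman's theorem, and let $\pi_n(\xi)\in G$ be a measurably chosen element whose orbit point is nearest to $\gamma_\xi(\ell n)$. Since $w_n\cdot\go$ is itself an orbit point within $o(n)$ of $\gamma_{\xi_\omega}(\ell n)$ (combine \eqref{E:sub-track} with $d_X(\go,w_n\cdot\go)/n\to\ell$), one gets $\frac{1}{n}\lvert\pi_n(\xi_\omega)^{-1}w_n\rvert\to 0$, and the criterion applies with no second boundary, no strips, and no volume estimates beyond temperateness. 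Your opening observations (convergence of $w_n\cdot\go$ to $[\gamma_\omega]\in\pka X$, stationarity of $\nu$, finiteness of $H(\mu)$) are correct and are exactly what is needed to set this up.
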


\begin{proof}[Proof of Corollary~\ref{Poissonboundary} and Corollary~\ref{Poissonboundaryteich}]

It is known that when $X$ is \CAT or Teichm\"uller space and the semigroup $G$ generated by the support of $\mu$ is non-elementary, $\mu$-a.e. sample path sublinearly tracks some $X$ geodesic ray $\tau_\omega$ (\cite{Tiozzo} for Teichm\"uller space and \cite{KarlssonMargulis} for \CAT spaces). Moreover, the mapping class group action on Teichm\"uller space is temperate by the main result of \cite{ABEM}. Theorem~\ref{rayisMorse} shows that there is a sublinear function $\kappa$ such that for $\mu$-a.e. sample path $\omega$, $\omega_n \go$ sublinearly tracks a $\kappa$-Morse geodesic ray $\zeta$. Thus the assumptions of Theorem~\ref{T:poiss-general} are satisfied and hence $(\pka X, \nu)$ is a model for the associated Poisson boundary.
\end{proof}

%
%
%
%
%

 \bibliography{bib}
\bibliographystyle{alpha}

\end{document}